\newcommand{\captionfonts}{\small}
\long\def\@makecaption#1#2{%
  \vskip\abovecaptionskip
  \sbox\@tempboxa{{\captionfonts #1: #2}}%
  \ifdim \wd\@tempboxa >\hsize
    {\captionfonts #1: #2\par}
  \else
    \hbox to\hsize{\hfil\box\@tempboxa\hfil}%
  \fi
  \vskip\belowcaptionskip}
\newcolumntype{x}[1]{>{\centering\arraybackslash}p{#1}}
\newcommand{\Real}{\ensuremath{\mathbb{R}}}
\DeclareMathOperator*{\st}{subject\;to}
\def\spose#1{\hbox to 0pt{#1\hss}}
\def\text #1{\hbox{\quad#1\quad}}
\def\nthinsp{\mskip -2   mu}
\def\superstar{^{\raise 0.5pt\hbox{$\nthinsp *$}}}
\def\SUPERSTAR{^{\raise 0.5pt\hbox{$*$}}}
\def\lamstarT {\lambda^{\raise 0.5pt\hbox{$\nthinsp *$}T}}
\def\Uscr{{\cal U}}
\def\Wscr{{\cal W}}
\def\hbar{\skew{4.2}\bar h}
		\def\bkE{{\rm I\kern-.17em E}}
		\def\bkE{\mathbb{E}}
		\def\bk1{{\rm 1\kern-.17em l}}
		\def\bkD{{\rm I\kern-.17em D}}
		\def\bkR{{\rm I\kern-.17em R}}
		\def\bkP{{\rm I\kern-.17em P}}
		\def\bkY{{\bf \kern-.17em Y}}
		\def\bkZ{{\bf \kern-.17em Z}}
		\def\beq{\begin{eqnarray}}
		\def\bc{\begin{center}}
		\def\be{\begin{enumerate}}
		\def\bi{\begin{itemize}}
		\def\bs{\begin{small}}
		\def\bS{\begin{slide}}
		\def\ec{\end{center}}
		\def\ee{\end{enumerate}}
		\def\ei{\end{itemize}}
		\def\es{\end{small}}
		\def\eS{\end{slide}}
		\def\eeq{\end{eqnarray}}
	\def\cp2problem#1#2#3#4{\fbox
		 {\begin{tabular*}{0.9\textwidth}
			{@{}l@{\extracolsep{\fill}}l@{\extracolsep{6pt}}l@{\extracolsep{\fill}}c@{}}
				#1 & & $#4 $ 
			\end{tabular*}}}
\newcommand{\pmat}[1]{\begin{pmatrix} #1 \end{pmatrix}}
		\renewcommand{\emph}[1]{\textbf{#1}}
		\def\bk1{{\rm 1\kern-.17em l}}
		\def\bkD{{\rm I\kern-.17em D}}
		\def\bkR{{\rm I\kern-.17em R}}
		\def\bkP{{\rm I\kern-.17em P}}
		\def\bkZ{{\bf{Z}}}
\newcommand {\beeq}[1]{\begin{equation}\label{#1}}
\newcommand {\eeeq}{\end{equation}}
\newcommand {\bea}{\begin{eqnarray}}
\newcommand {\eea}{\end{eqnarray}}
\def\texitem#1{\par\smallskip\noindent\hangindent 25pt
               \hbox to 25pt {\hss #1 ~}\ignorespaces}
\def\st{\mbox{subject to}}
\def\Qb{\mathbb{Q}}
\def\Zcr{\mathcal{Z}}
\def\mcr{\mu}
\def\wscr{\mathcal{W}}
 \newcommand{\remove}[1]{}
\def\Real{\mathbb{R}}
\def\argmin{\mathop{\rm argmin}}
\def\argmax{\mathop{\rm argmax}}
\author{Yue Xie \and
		Uday V.~Shanbhag\thanks{Xie and Shanbhag are with the
			Department of Industrial and Manufacturing Engineering,
					   Pennsylvania State University, University Park,
					   PA 16802, USA, {\tt\small xieyue1990@gmail.com,
						   udaybag@psu.edu}. Xie and Shanbhag gratefully acknowledge the support of the NSF through the award
NSF CAREER award CMMI-1246887.
 }}
\title{On robust solutions to uncertain linear complementarity problems and their variants
	}
\begin{document}
\maketitle
\begin{abstract}
A popular approach for addressing uncertainty in variational
inequality problems is by solving the expected residual minimization
(ERM) problem~\cite{CFukushima05,ChenWZ12}. This avenue necessitates
distributional information associated with the uncertainty and requires
minimizing nonconvex expectation-valued functions.  We consider a
{distinctly different} approach in the context of uncertain linear
complementarity problems with a view towards obtaining robust solutions.
Specifically, we define a {\em robust} solution to a complementarity
problem as one that minimizes the worst-case of the gap function. In
what we believe is amongst the first efforts to comprehensively address
such problems in a distribution-free environment, we show that {under
specified assumptions on the uncertainty sets}, the robust solutions to
uncertain monotone linear complementarity problem can be tractably
obtained through the solution of a {\em single convex} program. We also
define uncertainty sets that ensure that robust solutions to
non-monotone generalizations can also be obtained by solving convex
programs. More generally, robust counterparts of uncertain non-monotone
LCPs are proven to be low-dimensional nonconvex quadratically
constrained quadratic programs. We show that these problems may be
globally resolved by customizing an existing branching scheme. We
further extend the tractability results to include uncertain affine
variational inequality problems defined over uncertain polyhedral sets
as well as to hierarchical regimes captured by mathematical programs
with uncertain complementarity constraints. Preliminary numerics on
{uncertain linear complementarity and traffic equilibrium problems} 
suggest that the presented avenues hold promise.
\end{abstract}
\maketitle

\section{Introduction}\label{sec:introduction}
The fields of robust control~\cite{DuPa00} and robust optimization
~\cite{bental09robust} have grown immensely
over the last two decades in an effort and are guided by the desire to provide
solutions {\em robust} to parametric uncertainty. To provide a context
for our discussion, we begin by defining a convex optimization problem
 \begin{align}
 \label{UOpt$(u)$} 	\min_{x \in X} \, f(x;u),
\end{align}
where $X\subseteq \Real^n$, $u \in \Uscr \subseteq \Real^L$, $f:X \times \Uscr \to \Real$
is a convex function in $x$ for every $u \in \Uscr$. The resulting
collection of uncertain optimization problems is given by the following
set:
$$ \left\{ \min_{x \in X} \, f(x;u)\right\}_{u \in \Uscr}.$$
Given such a set of problems, one avenue for defining a {\em robust}
solution to this collection of uncertain problems is given by the
solution to the following worst case problem:
\begin{align}
\label{ROpt}\min_{x \in X} \max_{u \in \Uscr} f(x;u).
\end{align}
Robust optimization has grown into an established field and there has
been particular interest in deriving tractable robust counterparts to
\eqref{ROpt}; in particular, can one formulate a {\em single} convex
optimization problem whose solution lies in the set of solutions of
\eqref{ROpt}. Such questions have been investigated in linear, quadratic, and
in more general convex regimes~\cite{bental09robust,bertsimas11theory}
while more recent efforts have considered integer programming
problems~\cite{onn14robust}.

A particularly important class of problems
that includes convex optimization problems is that of variational
inequality problems~\cite{facchinei02finite}. Recall that a variational inequality problem requires an $x \in X$ such that
\begin{align}
(y-x)^T F(x) \geq 0, \qquad \forall y \in X, \label{VI$(X,F)$}
\end{align}
where $F:X \to \Real^n$. {Hereafter, this problem will be denoted by VI$(X,F)$. Moreover when $X$ is a cone, it is
known that VI$(X,F)$ is equivalent to the
complementarity problem CP$(X,F)$ (see ~\cite{facchinei02finite}); The
latter problem} requires an $x$ such that
\begin{align}
X \ni x \perp F(x) \in X^*, \label{CP$(X,F)$}
\end{align}
where $X^*$ denotes the dual cone  defined as $X^*
\triangleq \{y: y^Tx \geq 0, \, x \in X\}$ and $y\perp x$ implies that
$[y]_i[x]_i = 0$ for every $i$.  Such problems have grown increasingly
relevant in control and optimization theory and find application in the modeling
of convex  Nash games in communication
networks~\cite{alpcan05distributed,yin11nash}, traffic equilibrium
problems~\cite{dafermos1980traffic,harker86note}, and spatial 
equilibrium problems~\cite{hobbs86mill,HobbsPang_CSFE}. Naturally, in
almost all of these settings, uncertainty represents a key concern. For
instance, in Nash-Cournot games, the price function of the quantity
being sold may have uncertain parameters while in traffic
equilibrium problems, travel times are rarely known with certainty.
Given such a challenge, one may articulate a {parametrized}
VI$(X,F(\bullet;\tilde u))$ that requires an $x$ such that
\begin{align} (y-x)^T F(x;\tilde u) \geq 0, \qquad \forall y \in X,
\label{unvi}\end{align}
{where $\tilde u$ denotes a belief regarding an uncertain parameter.}
The resulting collection of uncertain variational inequality problems
is given by the following:
\begin{align}
\left\{ \mbox{VI}(X;F(\bullet;u))\right\}_{u \in \Uscr}.
\label{UVI$(X,F,U)$}
\end{align}
In this paper, we consider the setting where $F(x;u) \triangleq M(u)x+
q(u)$ where $M(u) \in \Real^{n \times n}$, $q(u) \in \Real^n$, and $X
\triangleq \Real_n^+$ and the resulting {collection} of affine variational
inequality problem is equivalent to {collection} of uncertain linear complementarity
problems 
{\begin{align}
\left\{ \mbox{LCP}(M(u),q(u))\right\}_{u \in \Uscr},
\label{uLCP$(M,q)$}
\end{align}
where LCP$(M(u),q(u)$ is defined as follows:
\begin{align}
\label{LCP$(u)$} (M(u)x + q(u))^Tx = 0.\\
\notag M(u)x + q(u) \geq 0,\\
\notag x \geq 0.
\end{align}}
{In subsequent sections, we extend these statements to uncertain affine
variational inequality problems (AVI) over polyhedral sets and uncertain
mathematical programs with complementarity constraints (MPCC). More
general forms of $F(x,u)$ will be considered in future research. Given
that LCPs and MPCCs allow for capturing a broad class of application
settings and the paucity of results for capturing uncertain forms of
such problems, we believe that these questions have relevance and
represent a necessary step before proceeding further.} 

Now we briefly touch upon earlier efforts in addressing
this class of problems. In particular, much of the prior work has
focused on the minimization of the expected residual
function (cf.~\cite{CFukushima05,ChenWZ12,chen09robust} and the
		references therein). 
Given a random map $F(x;\xi)$
where $\xi: \Omega \to \Real^d$, $F:X \times \Real^d \to \Real^n$, and
a probability space $(\Omega, {\cal F}, \mathbb{P})$,  based on
the residual function $f(x;\xi)$, the
expected residual minimization problem (ERM) problem is defined as the
following:
\begin{align}
\label{ERM} \min_{x\in X} \mathbb{E}[f(x;\xi)].
\end{align}
Such avenues have derived such solutions for both  monotone
 as well as more general
stochastic variational inequality problems but are
complicated by several challenges:
\begin{enumerate}
\item[(i)]First, such avenues necessitate the availability of a probability
distribution $\mathbb{P}$. 
\item [(ii)] Second, the expected residual minimization problem, given by
ERM, leads to a possibly nonconvex stochastic optimization problem and much of the research has focused on providing
estimators of stationary points to such problems.
\item[(iii)] Third, this approach focuses on minimizing the {\em
	average} or expected residual and may be less capable of providing
	solutions that minimize worst-case residuals unless one employs
	risk-based variants.
\end{enumerate}
In the spirit of robust approaches employed for the resolution of a
range of optimization and control-theoretic problems, we consider an
avenue that requires an uncertainty set. An alternative  of
immense importance not considered here is the scenario-based
approach~\cite{calafiore00,calafiore01} and is left as the subject of
future work. Furthermore, rather
than minimizing the expected residual function, we consider the
miminization of the worst-case residual over this uncertainty set.
Specifically, we make the following contributions:
\begin{enumerate}
\item[(a)] {First, in the context of stochastic linear complementarity
problems with possibly asymmetric positive semidefinite matrices, we show that
the robust counterpart is a {\bf single} convex optimization problem
under varying assumptions on the associated uncertainty set.
}
\item[(b)] Second, we observe that somewhat surprisingly, robust solutions to non-monotone
regimes can also be obtained {\bf tractably} under some conditions, revealing
that such problems are characterized by hidden convexity. More
generally, robust solutions to uncertain non-monotone LCPs are shown to
lead to low-dimensional nonconvex quadratically constrained quadratic
programs. We customize a recently presented branching scheme to allow
for obtaining global solutions to the resulting QCQPs.
\item[(c)] Third, we extend our statements to two sets of
generalizations: (i) Uncertain affine variational inequality problems over uncertain
polyhedral sets; and (ii) Mathematical programs with uncertain
complementarity constraints. 
\item [(d)] These results are supported by preliminary numerics that provide a {comparison across different solvers on a non-monotone uncertain LCP and a traffic equilibrium case study. In the instance of the latter, we
observe qualitative distinctions between ERM solutions and the proposed robust
solutions.}
\end{enumerate}

The remainder of this paper is organized as follows. In Section
\ref{sec:II}, we motivate our study through two applications and provide
an instance of a monotone complementarity problem with arbitrarily high
price of robustness. {In Section~\ref{sec:III}, we first define the
	robust counterpart and subsequenty discuss tractable
reformulations of {both  uncertain monotone LCPs as well as a special case of uncertain
non-monotone LCPs.} Non-monotone generalizations and their global solutions
form the core of Section~\ref{sec:IV} while generalizations to variational
inequality and hierarchical regimes are considered
in Section~\ref{sec:V}. Preliminary numerics are provided in
Section~\ref{sec:VI} and we conclude with a short set of remarks in
Section~\ref{sec:VII}. Finally, to improve readability, we list the key
acronyms and their equation numbers:
\bi
\item[(1)] VI or VI$(X,F(\bullet))$: see \eqref{VI$(X,F)$}.
\item[(2)] CP or CP$(X,F(\bullet))$: see \eqref{CP$(X,F)$}.
\item[(3)] Uncertain LCP or uLCP$(M(u),q(u))$: see \eqref{LCP$(u)$}.
\item[(4)] ERM: see \eqref{ERM}.
\item[(5)] MPCC: see \eqref{MPCC}.
\ei
\section{Motivating examples and applications}\label{sec:II}
In this section, we begin by providing an example of an uncertain linear
complementarity problem {where the lack of robustness has immense
	impact.} and proceed to  discuss two applications that motivate the study
	of uncertain linear complementarity problems.
\subsection{\bf Robust solution vs. non-robust solution}
{We begin by considering the following simple uncertain LCP:}
\begin{align} \label{eLCP$(u)$}
0 \leq \pmat{x \\ y}
\perp  \pmat{M & 0 \\ 0 & S(\xi,\eta)}\pmat{x \\ y} + \pmat{ -q_x \\
	q(u)} \geq 0, \quad \forall (u,\xi,\eta) \in \Uscr_u \times
	\Uscr_{\zeta}
\end{align}
where $M  = \left(I - {1\over (n+1)}ee^T\right) \in \Real^{n \times n}, S(\xi,\eta) = \xi S_1 + \eta S_2, x \in \Real^n, y \in \Real^n, q_x \in \mathbb{R}^n_+, q(u)= ue$, $u \in \Uscr_u \triangleq [0,1]$.
Furthermore, $S_1 = nI_n + e_ne_n^T$ and $S_2 = ee^T + e_ne_n^T$. $e$ denotes the column of ones, $e_n = (1,\hdots, n)^T$ and $\Uscr_{\zeta} \triangleq \{\zeta = (\xi,\eta): \xi + \eta \leq 1, \xi \geq 0, \eta \geq 0 \}$. We begin by noting that a solution to the upper system
$$ 0 \leq x \perp M x - q_x \geq 0 $$
is uniquely defined by $x^*$ where $x^* = (I+ee^T)q_x \geq 0$.
The lower system requires solving the following equation:
$$
0 \leq y \perp S(\xi, \eta) y + q(u) \geq 0.
$$
Since $S(\xi,\eta) \succeq 0$ and $q(u) \geq 0$, it follows that $y
\equiv 0$ is a solution for all $u \in \Uscr_u$ and $(\xi,\eta) \in
\Uscr_{\zeta}$ However, if $\xi = \eta = u =
0$, then any nonnegative $y$ is also a solution implying that there is a
ray of solutions.  Our focus lies in obtaining solutions that minimize
the worst-case residual defined as follows:
 \begin{align*}
& \quad \max_{u \in \Uscr_u,(\xi, \eta)\in \Uscr_\zeta} \pmat{x \\ y}^T \left(\pmat{M & 0 \\ 0 &
		 S(\xi,\eta)}\pmat{x \\ y} +
\pmat{-q_x\\q(u)}\right) 
 = x^T (M x - q_x) + \max_{u \in \Uscr_u,(\xi, \eta)\in \Uscr_\zeta} \left(y^TS(\xi,\eta)y +
	y^Tq(u)\right)\\
& = x^T (M x - q_x) + \max_{u \in \Uscr_u,(\xi, \eta)\in \Uscr_\zeta} \left(\xi y^TS_1y + \eta y^T S_2 y
	+ u y^Te \right)\\
& = x^T (M x - q_x) + \max\{y^T S_1 y, y^T S_2 y\} + y^T e.
\end{align*}
We use this setting to distinguish between a non-robust solution and a
robust solution.\\
 \noindent {\em A non-robust solution:}  {Suppose the realization of $u,\xi,\eta$
 is such that $u = 0,\xi=0,\eta=0$ and the resulting solution is given by
 $(x^*,y_1)$ where $y_1 \geq 0$. Consequently, the worst-case residual
 given by $x^T (M x - q_x) + \max\{y_1^T S_1 y_1, y_1^T S_2 y_1\} + y_1^T e.$ could be arbitrarily high since $y_1$ is any
 nonnegative vector. In effect, a non-robust solution chosen under a single realization can have large
 worst-case residual.} \\
 \noindent {\em A robust solution:} The robust solution of this problem
 is given by $(x^*,0)$ and achieves that worst-case residual equals
 to $x^{*T} (M x^* - q_x)$.
 
\noindent {\bf Remark:} {This example shows that in contrast with a {\em robust solution} of
	an uncertain LCP, {\em non-robust} solutions may lead to
	arbitrarily poor worst-case residual.} 
\subsection{Applications}
\paragraph{\bf Uncertain traffic equilibrium problems}
{A static traffic equilibrium model~\cite{facchinei02finite,harker86note}
captures equilibriating (or steady-state) flows in a traffic network in
which a collection of selfish users attempt to minimize travel costs.
Here, we present a path-based formulation in $\mathcal{N}$. $\mathcal{N}$ denotes the
network while $\mathcal{A}$ represents the associated set of edges. Further, let $\mathcal{O}$ and $\mathcal{D}$ denote the
set of origin and destination nodes, respectively while the set of
origin-destination (OD) pairs is given by $\mathcal{W} \subseteq
\mathcal{O} \times \mathcal{D}$}. Let $\mathcal{P}_w$ denote the set of paths connecting each $w \in
\mathcal{W}$ and $\mathcal{P} = \cup_{w \in \mathcal{W}}\mathcal{P}_w$.
Let $h_p$ denote the flow on path $p \in \mathcal{P}$ while $C_p(h;u)$,
the associated (uncertain) travel cost on $p$, is a function of the
entire vector of flows $h \equiv (h_p)$ and the uncertainty $u \in
\Uscr$. Let
$d_w(v;u)$  represent the uncertain travel demand between O-D pair $w$
and is a function of  $v
\equiv (v_w)$, the vector of minimum travel costs between any
OD pair, and the uncertainty $u \in \Uscr$. Based on Wardrop user equilibrium principle, users
choose a minimum cost path between each O-D pair:
\begin{equation}\label{cp1}
{0} \leq h_p \perp v_w - C_p(h)  \geq {0},\qquad \forall
{w} \in \mathcal{W}, \quad  p \in \mathcal{P}_w.
\end{equation}
{Additionally, the travel costs are related to demand satisfaction
	through this problem}:
\begin{equation}\label{cp2}
0 \leq v_w \perp \sum_{p \in \mathcal{P}_w} h_p - d_w(v) \geq 0,\qquad \forall w \in \mathcal{W},
\end{equation}
Static traffic user equilibrium problem is to solve a pair $(h,u)$
satisfying \eqref{cp1} and \eqref{cp2}, compactly stated as the
following  uncertain complementarity problem:
\begin{align}
0 \leq \pmat{h \\ v} \perp
\pmat{C(h;u) - B^T v \\
Bh - d(v;u)} \geq 0, \quad \forall u \in \Uscr
\end{align}
where $C(h;u)=(C_p(h;u) \mid p \in \mathcal{P})$, $d(v;u)=(d_w(v;u) \mid w \in
		\mathcal{W})$ and $B$ is the (OD pair, path)-incidence matrix $(b_{wp})$:
$$
b_{wp} \triangleq
\left\{ \begin{array}{ll}
1 & \mbox{if } p \in \mathcal{P}_w\\
0 & \mbox{otherwise}.
\end{array}\right.
$$
This represents an uncertain collection of complementarity problems and
we desire an equilibrium $(h,v)$ that is robust to uncertainty.

\paragraph{\bf Uncertain Nash-Cournot games} {Nash-Cournot models for
competitive behavior find application in a variety of settings, including
	the context of networked power markets~\cite{hobbs01linear}}. We
	describe an instance of a single node $N-$player Nash-Cournot game
	in which $N$ players compete for a single good.
	Suppose  player $i$'s uncertain linear cost
function given by $c_i(u)x_i$ where $x_i$ is her production level decision.
Furthermore, the $i$th player's capacity is denoted by $\mbox{cap}_i$.
{We assume that sales of the good are priced using an (uncertain) price
function dependent on aggregate sales $X$ and denoted by $p(X;u)$ where
$u \in \Uscr$}. We restrict our attention to settings where this price
function is affine and defined as follows: $p(X;u) \triangleq a(u) -
b(u)X$ where $a(u),b(u) > 0$,  $X \triangleq
\sum_{i=1}^N x_i$ and $u \in \Uscr$.
	The $i$th agent's problem is given by the following:
\begin{align}
\tag{Player$(x_{-i})$} \min_{x_i \geq 0}  & \ \left(c_i(u) x_i - p(X;u)
		x_i\right) \\
\notag \st &\qquad x_i \leq \mbox{cap}_i.  \qquad (\lambda_i)
\end{align}
The sufficient equilibrium conditions of the Nash-Cournot game are given
by the concatenation of the resulting optimality conditions:
\begin{align*}
0 \leq x_i  & \perp  b(u) (X + x_i)  + \lambda_i + c_i(u) - a(u) \geq 0,
  & \forall  i, \\
0 \leq \lambda_i &  \perp \mbox{cap}_i - x_i \geq 0,  & \forall i.
\end{align*}
The resulting uncertain LCP is given by the following:
\begin{align*}
0 \leq z \perp M(u) + q(u) \geq 0, \quad \forall u \in \Uscr, \mbox{
	where } M(u) \triangleq \pmat{ b(u) (I+ee^T) & I \\
							-I & {\bf 0}}, q(u) \triangleq \pmat{ c(u) -
								a(u) e \\
										\mbox{cap}},\end{align*}
{$e$ denotes the column of ones, $\mbox{cap}$ is the column of
capacities, and $I$ represents the identity matrix.}
\section{Uncertain LCPs with tractable robust counterparts}\label{sec:III}
While optimization
problems admit a natural objective function, such a function is not
immediately available when considering variational inequality
problems. However, one may define a {\em residual/merit function} associated with
VI$(X,F)$. It may be recalled that a function $g(x)$ is a residual/merit function for
VI$(X,F)$ if {the following hold:}
\bi
\item[(i)] {$g(x) \geq 0$} for $x \in X$; 
\item[(ii)] $x \in X$ solves
VI$(X,F)$ if and only if $g(x) = 0$ (see~\cite{larsson93}).
\ei
 When $X$ is
a cone, the problem reduces to a complementarity problem and the
residual/merit function associated with CP$(X,F)$ is given by {the
	following} \textbf{gap function}, defined as follows}: 
\begin{align} \label{Gap function}	
\theta_{\mathrm{gap}} (x,u) \triangleq
\left\{
\begin{array}{ll}
F(x,u)^T x & \mbox{if } F(x,u) \in X^*,\\
+\infty & \mbox{otherwise.}
\end{array}\right.
\end{align}
{Throughout this section, we assume that the set
$X$ is the nonnegative orthant $\Real_+^n$, $F(x,u) \triangleq M(u)x+q(u)$, where $M(u) \in \Real^{n
	\times n}$ and $q(u) \in \Real^n$ for every $u \in \cal U$.
Furthermore, throughout this paper, we utilize the gap function as the
residual function  in developing tractable convex and
relatively low-dimensional robust counterparts of uncertain LCPs with
monotone and non-monotone maps.  Specifically, in Section~\ref{sec:31}, we consider settings where $q(v)$ is an
uncertain vector and $M(u)$ is an uncertain positive semidefinite matrix
with $v \in {\cal V}$ and $u \in \Uscr$. We provide robust counterparts
in regimes where $\Uscr$ and  ${\cal V}$ are either distinct (unrelated)  or
related under varying assumptions on the uncertainty sets. In
Section~\ref{sec:32}, we provide tractable robust counterparts to regimes where
the $M(u)$ is not necessarily positive semidefinite.}

\subsection{{Uncertain monotone LCPs}}  \label{sec:31}
Much of the efforts in the resolution of uncertain variational
inequality problems has considered the minimization of the expected
residual; instead, we pursue a strategy that has defined the field of
robust optimization in that we consider the minimization of the
worst-case gap function (residual) over a prescribed uncertainty set.
While in its original form, such a problem is relatively challenging nonsmooth semi-infinite
optimization problem. Yet, it can be shown that these problems are equivalent to tractable convex programs. By setting $f(x,u) =
\theta_{\mathrm{gap}}(x,u)$,  \eqref{ROpt} can be recast as follows:
\begin{equation}\label{RCP}
\begin{array}{rrl}
\min & \displaystyle \max_{u \in \mathcal{U}} \quad F(x,u)^T x \\
\st & F(x,u) & \in X^* \qquad \forall u \in \mathcal{U},\\
& x & \in X.
\end{array}
\end{equation}
{Before proceeding, it is worth noting that the robust formulation
attempts to find a solution that minimizes the maximal (worst-case)
	value taken by $F(x,u)^Tx$ over the set of solutions that are
	feasible for every $u \in \Uscr.$ In fact, the following
	relationship holds between the optimization problem \eqref{RCP} and
the original uncertain complementarity problem.}
\begin{lemma}
Consider the problem given by \eqref{RCP}. Then $x \in X$
solves
$$ X \ni x \perp F(x,u) \in X^* \mbox{ for all } u \in \Uscr, $$ if and only if
$x$ is a solution of \eqref{RCP} with optimal value zero.
\end{lemma}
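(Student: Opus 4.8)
The plan is to unwind the definitions on both sides, using the single structural fact that on the feasible region of \eqref{RCP} the objective is nonnegative. First I would record that a point $x$ is feasible for \eqref{RCP} precisely when $x \in X$ and $F(x,u) \in X^*$ for every $u \in \Uscr$, and that for any such $x$ the definition of the dual cone $X^*$ gives $F(x,u)^T x \geq 0$ for every $u \in \Uscr$; hence $\max_{u \in \Uscr} F(x,u)^T x \geq 0$, so $0$ is a lower bound for the optimal value of \eqref{RCP} and is attained only when $F(x,u)^Tx=0$ for all $u$.

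For the ($\Rightarrow$) direction I would suppose $x$ solves $X \ni x \perp F(x,u) \in X^*$ for all $u \in \Uscr$. Then $x \in X$ and $F(x,u)\in X^*$ for every $u$, so $x$ is feasible for \eqref{RCP}; and since $X = \Real_+^n$, the relation $x \perp F(x,u)$ means $[x]_i[F(x,u)]_i = 0$ for every $i$, which upon summation yields $F(x,u)^T x = 0$ for every $u \in \Uscr$, i.e. $\max_{u \in \Uscr} F(x,u)^T x = 0$. By the lower bound above, $x$ is then optimal for \eqref{RCP} and the optimal value is zero. For the ($\Leftarrow$) direction I would suppose $x$ is an optimal solution of \eqref{RCP} with optimal value zero. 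Feasibility gives $x \in X$ and $F(x,u) \in X^*$ for all $u \in \Uscr$, while the optimal value being zero gives $F(x,u)^T x \leq 0$ for every $u \in \Uscr$; combined with $F(x,u)^Tx \geq 0$ this forces $F(x,u)^T x = 0$ for every $u$. Because $x \geq 0$ and $F(x,u) \geq 0$ componentwise, each term in $\sum_i [x]_i[F(x,u)]_i$ is nonnegative and their sum vanishes, so $[x]_i[F(x,u)]_i = 0$ for all $i$, i.e. $x \perp F(x,u)$; hence $X \ni x \perp F(x,u) \in X^*$ for every $u \in \Uscr$.

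The argument is essentially definitional, so I do not expect a substantive obstacle. The only point that warrants care is the passage between the scalar identity $F(x,u)^T x = 0$ and the componentwise orthogonality $x \perp F(x,u)$: this equivalence is valid here precisely because the sign restrictions $x \in \Real_+^n$ and $F(x,u) \in \Real_+^n$ make every summand in $\sum_i [x]_i[F(x,u)]_i$ nonnegative, so the sum is zero if and only if each summand is. (If one instead worked with a general cone $X$, one would simply read $x \perp F(x,u)$ as the scalar condition $F(x,u)^Tx=0$ and the same proof goes through verbatim.)
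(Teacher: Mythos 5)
Your proof is correct. The paper states this lemma without proof, and your definitional argument --- feasibility plus the dual-cone inequality $F(x,u)^Tx \geq 0$ giving the lower bound of zero, then passing between the scalar identity $F(x,u)^Tx=0$ and componentwise orthogonality via the sign restrictions on $\Real_+^n$ --- is exactly the intended reasoning; the care you take on that last passage is the only nontrivial point and you handle it properly.
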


{Unfortunately, it is unlikely that there exists an $x$ that solves
	CP$(X,F(\bullet,u))$ for every $u \in U$; instead, we focus on
		deriving tractable counterparts that produce global minimizers
		to \eqref{RCP} which may be rewritten as follows:
\begin{equation}
\label{unlcp}
\begin{array}{rrl}
{\displaystyle \min_{x \geq 0}} &\displaystyle \max_{u \in \mathcal{U}} \ x^T(M(u)x+q(u)) & \\
\st & \displaystyle\min_{u \in \mathcal{U}} \  M_{i \bullet}(u) x  + q_{i}(u)  & \geq
0, \quad \forall i.
\end{array}
\end{equation}
or
\begin{equation}
\label{unlcp2}
\begin{array}{rrl}
{\displaystyle \min_{x \geq 0, t}} & t\\
\st & \displaystyle \max_{u \in \mathcal{U}} \ x^T(M(u)x+q(u)) & \leq t,\\
 & \displaystyle\min_{u \in \mathcal{U}} \ M_{i \bullet}(u) x  + q_{i}(u)  & \geq
0, \quad \forall i.
\end{array}
\end{equation}
{We first consider the development of tractable counterparts of \eqref{unlcp} or
\eqref{unlcp2} when $M(u) \triangleq M$ and $q$ is uncertain.
	Subsequently, we consider the more general setting when both $M$ and
	$q$ are uncertain but are derived from unrelated
	uncertainty sets.  Finally, we assume that both $M$ and $q$ are
	derived from related uncertainty sets.}
\subsubsection{Uncertainty in $q$} 
In this subsection, $q$ is subject to uncertainty and the problem can be
reduced to obtaining a robust solution to an uncertain quadratic program
(cf.~\cite{bental09robust}).  {We define $q(u)$ as follows:}
\begin{align}
\label{qu}
q(u) { \ \triangleq \ } q_{0}+\sum_{l=1}^L u_l q_{l}, \qquad u = (u_1,u_2,\hdots,u_L) \in \mathcal{U},
\end{align}
{where $\Uscr$ is a perturbation set yet to be specified.}
{Consequently, \eqref{unlcp}
may be rewritten as follows}:
{\begin{equation}\label{unlcpq}
\begin{array}{rrl}
\min &  x^T(Mx + q_0)  + \displaystyle\max_{u \in \mathcal{U}}
	 \sum_{l=1}^L u_l(x^T q_l)  & \\
\st  & M_{i \bullet} x + [q_0]_i+ \displaystyle \min_{u \in \mathcal{U}}
\sum_{l=1}^L u_l [q_l]_i  & \geq
0, \, \qquad \forall i\\
& x  & \geq 0.
\end{array}
\end{equation}}
We begin by considering three types of uncertainty sets: $\Uscr_1,
   \Uscr_{\infty}, $ or $\Uscr_2$, where
\begin{align}
\label{qU}
\begin{aligned}
\Uscr_1 & \triangleq \left\{u: \|u\|_1 \leq 1\right\},  \,
\Uscr_2  \triangleq \left\{u: \|u\|_2 \leq 1\right\},
\mbox{ and}\quad \Uscr_{\infty} \triangleq \left\{u: \|u\|_{\infty} \leq 1\right\},
\end{aligned}
\end{align}
The proof of our first tractability result is relatively
straightforward and is inspired by Examples
1.3.2 and 1.3.3 from \cite{bental09robust}. 
\begin{proposition}[{TRC for uLCP$(M,q(u))$}]
Consider the uncertain LCP given by \eqref{unlcpq} where $M$ is a
positive semidefinite matrix.  Let $\mathcal{U}$ be defined as $\Uscr_1,
		 \Uscr_2$ or $\Uscr_\infty$ as specified by \eqref{qU}. Then \eqref{unlcpq} can be reformulated as a tractable program.
\end{proposition}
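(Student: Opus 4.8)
The plan is to reformulate \eqref{unlcpq} as a single explicit convex (conic) program by separately handling the inner $\max$ in the objective and the inner $\min$ in each constraint, for each of the three choices of $\Uscr$. First I would observe that $x^T(Mx+q_0)$ is convex in $x$ (since $M\succeq 0$, so $x^T Mx = \tfrac12 x^T(M+M^T)x$ is convex) and that the map $x\mapsto x^Tq_l$ is linear, so both $\max_{u\in\Uscr}\sum_l u_l(x^Tq_l)$ and $-\min_{u\in\Uscr}\sum_l u_l[q_l]_i$ are support functions of $\Uscr$ evaluated at the linear images $(x^Tq_l)_{l=1}^L$ and $-([q_l]_i)_{l=1}^L$ respectively. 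The whole reformulation therefore reduces to writing down the support function of each $\Uscr$ and recognizing it is convex in $x$.

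Next I would carry out the three cases explicitly. Writing $a(x)\triangleq(x^Tq_1,\dots,x^Tq_L)^T$ and $b^{(i)}\triangleq([q_1]_i,\dots,[q_L]_i)^T$, the classical duality of norms gives: for $\Uscr_\infty$, $\max_{\|u\|_\infty\le1}u^Ta(x)=\|a(x)\|_1$ and $\min_{\|u\|_\infty\le1}(b^{(i)})^Tu=-\|b^{(i)}\|_1$; for $\Uscr_1$, the support function is $\|a(x)\|_\infty$, giving $-\|b^{(i)}\|_\infty$ in the constraint; and for $\Uscr_2$, it is $\|a(x)\|_2$, giving $-\|b^{(i)}\|_2$. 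Substituting, \eqref{unlcpq} becomes in the $\Uscr_\infty$ case
\begin{equation*}
\min_{x\ge0}\ x^T(Mx+q_0)+\|a(x)\|_1\quad\st\quad M_{i\bullet}x+[q_0]_i-\|b^{(i)}\|_1\ge0,\ \forall i,
\end{equation*}
which, after introducing auxiliary variables $w\in\Real^L$ with $-w_l\le x^Tq_l\le w_l$ and replacing $\|a(x)\|_1$ by $\sum_l w_l$ in the objective (the $\|b^{(i)}\|_1$ being constants), is a convex quadratic program with linear constraints. The $\Uscr_1$ case is handled identically with a single auxiliary scalar $w\ge x^Tq_l$, $w\ge -x^Tq_l$ for all $l$, yielding another convex QP; the $\Uscr_2$ case yields a convex quadratically constrained problem (or an SOCP after epigraph reformulation of the quadratic objective), using that $\|a(x)\|_2$ is a convex function of $x$. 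In each case I would then appeal to \eqref{unlcp2}-style epigraph form to present the final model as a bona fide convex program whose solution set coincides with the global minimizers of \eqref{unlcpq}.

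I do not expect any genuine obstacle here: the only points requiring care are (i) confirming that $M\succeq0$ suffices to make the quadratic term convex even when $M$ is asymmetric — this is immediate from $x^TMx=x^T\!\big(\tfrac{M+M^T}{2}\big)x$; (ii) checking that the feasible region remains a closed convex set after substituting the constants $\|b^{(i)}\|_p$, which is clear since these are fixed numbers; and (iii) noting the equivalence of the epigraph reformulations with the original $\min\max$ problem, which is routine. The main (modest) work is just bookkeeping the three norm-duality substitutions and writing out the auxiliary-variable lifting cleanly; this is precisely the kind of computation inspired by Examples 1.3.2 and 1.3.3 of \cite{bental09robust} referenced just before the statement, so I would keep it brief.
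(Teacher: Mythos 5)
Your proposal is correct and follows essentially the same route as the paper's proof: in each of the three cases the inner $\max$/$\min$ is evaluated via the dual-norm (support function) identities, the constant terms $\|b^{(i)}\|_p$ are substituted directly into the constraints, and the norm appearing in the objective is lifted with auxiliary epigraph variables to yield a convex QP (for $\Uscr_\infty$, $\Uscr_1$) or a QCQP/SOCP (for $\Uscr_2$). No gaps; the bookkeeping matches the paper's case-by-case computation.
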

\begin{proof}
\begin{enumerate}
\item[(i)] $\Uscr := \Uscr_{\infty}$: We begin by noting that $\|w\|_1 =
\displaystyle \max_{\|\eta\|_{\infty} \leq 1} w^T\eta$. Consequently, the
term $\sum_{l=1}^L u_l x^T q_l$ has maximal value $\sum_{l=1}^L |x^T
q_l|$. {Furthermore, we have that  
	$$ \displaystyle \min_{u \in \Uscr_{\infty}}
\sum_{l=1}^L u_l [q_l]_i = \min_{u \in \Uscr_{\infty}}
		u^Tq = -\max_{u\in \Uscr_\infty} (-u^Tq) = -\left\|\pmat{q_1 \\
			\vdots \\ q_L}\right\|_1 = -\sum_{l=1}^L |q_l|. $$} 
{Consequently, \eqref{unlcpq} is equivalent to the following:}
\begin{align}\label{unlcpq2}
\begin{array}{rrl}
\min & x^T(Mx+q_{0})+\sum_{l=1}^L |q_{l}^T x|\\
\st & Mx+q_{0}-\sum_{l=1}^L |q_{l}| & \geq 0,\\
& x & \geq 0.
\end{array}
\end{align}
Finally, by adding additional variables, \eqref{unlcpq2} can be rewritten
as a convex quadratic program (QP):
$$
\begin{array}{rrl}
\min & x^T (M x + q_0) + \sum_{l=1}^L t_l \\
\st & t_l \geq q_{l}^T x & \geq -t_l, \quad \forall l,\\
& Mx+q_{0}-\sum_{l=1}^L |q_{l}| &\geq 0,\\
& x &\geq 0.
\end{array}
$$
\item[(ii)] $\Uscr := \Uscr_{1}$: We proceed in a fashion similar to
(i) and begin by recalling that 
${\displaystyle \max_{\|\eta\|_1 \leq 1}} \eta^T w = \|w\|_\infty$,
	leading to the folliwing simplification:
$$\max_{u \in \Uscr_1} \sum_{l=1}^L
u_l x^T q_l = \left\| \pmat{x^Tq_1 \\ \vdots \\ x^Tq_L}\right\|_{\infty}
=\displaystyle \max_{l \in \{1,\hdots,L\}} | x^T q_l
|.$$ Similarly, ${\displaystyle \min_{\|\eta\|_1 \leq 1}}
\eta^T w = -\|w \|_\infty$ and $\sum_{l=1}^L u_l (q_l)_i$ has minimal value $-\max_{l
	\in \{1,\hdots,L\}} | (q_l)_i |$. {Consequently,  the constraints
	reduce to \begin{align*}
	 & \qquad M_{i\bullet} x+(q_{0})_i - \displaystyle \max_{l\in \{1,\hdots,L\}} |
	[q_l]_i | \geq 0,&   \forall i \\
 \Leftrightarrow & \qquad M_{i\bullet} x+(q_{0})_i -  |
(q_l)_i |   \geq 0, \qquad & \forall i, l  \\
	 \Leftrightarrow & \qquad Mx+q_{0}- |q_{l}|   \geq 0,& \forall l.
	\end{align*}} Similarly, the objective function can be
	stated as $x^T (M x + q_0) + \max_{l\in \{1,\hdots,L\}} | x^T q_l
	|$. By adding a variable $t$, this problem may be reformulated as
	the following convex quadratic program:
	$$
\begin{array}{rrll}
\min & x^T(Mx+q_{0})+t &&\\
\st & t \geq q_l ^T x & \geq -t, \quad &\forall l\\
& Mx+q_{0}- |q_{l}| & \geq 0, \quad & \forall l\\
& x & \geq 0.&
\end{array}
$$
\item[(iii)] $\Uscr:=\Uscr_2$: By leveraging Example 1.3.3 in
\cite{bental09robust}, it is seen that
$$
\max_{\| \eta \|_2 \leq 1} \eta^T w = \frac{w^Tw}{\|w\|_2} = \|w\|_2.
$$
As a result, $\sum_{l=1}^L u_l x^T q_l$ has maximal value $
\sqrt{{\sum_{l=1}^L (q_{l}^T x)^2}}$ while
$$
\min_{\| \eta \|_2 \leq r} \eta^T w = - \frac{w^Tw}{\|w\|_2} =
- \|w\|_2,
$$
indicating that  $\sum_{l=1}^L u_l (q_l)_i$ has minimal value $-
\sqrt{{\sum_{l=1}^L (q_{l})_{i}^2}}$.
Then \eqref{unlcpq} may be rewritten as:
$$
\begin{array}{rrl}
\mbox{min} & x^T(Mx+q_{0}) &+\sqrt{\sum_{l=1}^L (q_{l}^T x)^2}\\
\mbox{subject to} & Mx+q_{0}- q & \geq 0,\\
& x & \geq 0.
\end{array}
$$
where $q_{i} = \sqrt{{\sum_{l=1}^L (q_{l})_{i}^2}}$. By adding an extra
variable, we obtain convex program with a quadratic objective and a
conic quadratic inequality.$$
\begin{array}{rrl}
\mbox{min} & x^T (M x + q_0) + t \\
\mbox{subject to} &  \sqrt{{\sum_{l=1}^L (q_l^T x)^2}} & \leq t,\\
& M x + q_0 - q & \geq 0,\\
& x & \geq 0.
\end{array}
$$
\end{enumerate}
\end{proof}
Next, we present a more result where the uncertainty set is
captured by a more general convex set. Specifically, $\Uscr:= \Uscr_c$
where
\begin{equation} \label{defUc}
\hspace{-0.1in} \mathcal{U}_c \triangleq \left\{u \in \Real^L : \exists \nu \in \Real^k
: P u + Q \nu + p \in K \subseteq \Real ^ n\right\},
\end{equation}
$K$ is a cone, $P$ and $Q$ are given matrices, and $p$ is a
	given vector. 
\begin{proposition} [RC of uLCP$(M,q(u))$ when $\Uscr:=\Uscr_c$]
Consider the uncertain LCP given by \eqref{unlcpq} where $M$ is a
positive semidefinite matrix. Let $\mathcal{U} :=\Uscr_c$, where
$\Uscr_c$ is defined as \eqref{defUc}. Suppose one of the following
holds:
\begin{enumerate}
\item[(i)] $K$ is a polyhedral cone;
\item[(ii)] $K$ is a convex cone and  the following holds:
\begin{equation}\label{slat_pt}
\exists(\bar{u},\bar{\nu}) \mbox{ such that } P\bar{u} + Q\bar{\nu} + p \in \mbox{int} (K).
\end{equation}
\end{enumerate}
Then the robust counterpart of \eqref{unlcpq} is a finite dimensional
convex conic program {given by the following: 
$$
\begin{array}{rrlr}
\min & x^TM_0x + q_0^Tx+ p^Ty \\
\textrm{\em subject to} & (P^T y)_l + x^T q_l & = 0, & \forall l = 1,\hdots,L \\
& Q^T y & = 0,\\
& y & \in K^*,\\
& M_{i \bullet}x + [q_0]_i - p^T z_i & \geq 0,& \forall i = 1,\hdots, n \\
& [P_{\bullet l}]^T z_i & = [q_l]_i, &\forall i = 1,\hdots, n, l = 1,\hdots,L\\
& Q^T z_i & =0, &\forall i = 1,\hdots, n\\
& z_i & \in K^*,&\forall i = 1,\hdots, n\\
& x & \geq 0.
\end{array}
$$}
\end{proposition}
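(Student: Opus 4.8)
\emph{Proof approach.} The plan is to recognize both the worst-case objective term and every worst-case constraint in \eqref{unlcpq} as a \emph{conic linear program in the uncertainty variables} $(u,\nu)$, and to replace each by its conic dual, following the robust-counterpart recipe of \cite{bental09robust}. After isolating the deterministic part, \eqref{unlcpq} reads $\min_{x\ge 0}\ x^T(Mx+q_0)+\phi_0(x)$ subject to $M_{i\bullet}x+[q_0]_i+\phi_i(x)\ge 0$ for all $i$, where $\phi_0(x)\triangleq\max_{u\in\Uscr_c}\sum_{l=1}^Lu_l(x^Tq_l)$ and $\phi_i(x)\triangleq\min_{u\in\Uscr_c}\sum_{l=1}^Lu_l[q_l]_i$. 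With $\Uscr_c$ as in \eqref{defUc}, each of $\phi_0,\phi_i$ optimizes a linear functional of $(u,\nu)$ over the feasible set $\{(u,\nu):\ Pu+Q\nu+p\in K\}$.

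First I would dualize $\phi_0$. Writing $q(x)\triangleq(x^Tq_1,\dots,x^Tq_L)^T$, conic LP duality assigns a multiplier $y$ to the constraint $Pu+Q\nu+p\in K$ and gives $\phi_0(x)=\min\{\,p^Ty:\ P^Ty+q(x)=0,\ Q^Ty=0,\ y\in K^*\,\}$, that is, $(P^Ty)_l+x^Tq_l=0$ for every $l$. Strong duality (zero gap, dual attained) holds in case (i) because polyhedrality of $K$ turns both problems into linear programs, so LP strong duality applies whenever $\Uscr_c\neq\emptyset$; in case (ii) the Slater point \eqref{slat_pt} is a strictly feasible primal, which again forces zero gap and dual attainment for the conic program. (If $\phi_0(x)=+\infty$ the corresponding dual feasible set in $y$ is empty, so such an $x$ is excluded by the reformulated feasible set exactly as it is excluded by an infinite objective; weak duality covers this.) Because the outer problem is a minimization, this inner $\min$ over $y$ merges into it, with $y$ becoming a genuine decision variable.

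Next I would dualize each $\phi_i$. With $q^{(i)}\triangleq([q_1]_i,\dots,[q_L]_i)^T$ and $\phi_i(x)=-\max\{(-q^{(i)})^Tu:\ Pu+Q\nu+p\in K\}$, the same duality yields $\phi_i(x)=-\min\{\,p^Tz_i:\ P^Tz_i=q^{(i)},\ Q^Tz_i=0,\ z_i\in K^*\,\}$, i.e. $[P_{\bullet l}]^Tz_i=[q_l]_i$ for every $l$. Hence the $i$-th robust constraint $M_{i\bullet}x+[q_0]_i+\phi_i(x)\ge0$ is equivalent to the existence of a $z_i$ feasible for that inner program with $M_{i\bullet}x+[q_0]_i-p^Tz_i\ge0$; once more the existential $z_i$ is absorbed as a decision variable of the outer minimization. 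Gathering the merged variables $x,y,z_1,\dots,z_n$ together with all the linear equalities, the linear inequalities, and the memberships $y\in K^*$, $z_i\in K^*$ reproduces the displayed conic program verbatim (with $M_0\equiv M$, the deterministic matrix). It is finite-dimensional, and convex because $M\succeq0$ makes the objective a convex quadratic, $K^*$ is a convex cone, and every other constraint is affine.

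The sign bookkeeping in forming the conic duals is routine; the point I expect to require the most care — and hence the main obstacle — is verifying that the feasibility correspondence is \emph{exact}: passing to the duals must not discard an $x$ that is optimal for \eqref{unlcpq} (guaranteed, when the inner values are finite, by the strong-duality hypotheses (i) or (ii)) and must not admit a spurious $x$ (guaranteed by weak duality, together with the observation that an unbounded inner program corresponds to an infeasible dual, so the reformulated constraints fail at precisely the $x$ where the originals do).
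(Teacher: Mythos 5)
Your proposal is correct and follows essentially the same route as the paper: the paper omits the proof, deferring to Theorem 1.3.4 and Proposition 6.2.1 of Ben-Tal et al., which is exactly the conic-duality replacement of each semi-infinite (objective and constraint) term by its dual feasibility system, with strong duality supplied by polyhedrality in case (i) or the Slater point \eqref{slat_pt} in case (ii). Your extra care about exactness of the feasibility correspondence (weak duality excluding spurious $x$, strong duality retaining all genuine ones) is precisely what those cited results certify.
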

The proof follows from Theorem 1.3.4
	and Proposition 6.2.1 from \cite{bental09robust} and is omitted.

{\bf Remark:} If $K$ is {chosen to be the nonnegative orthant}, the uncertainty set is a polyhedron given
$Q=0$. {Both ${\cal U}_1$ and ${\cal U}_{\infty}$  are included in
this general case}. If $K$ is chosen to be the second-order cone, a special case of
the perturbation set $\Uscr_c$ is a ball. Under both circumstances, the problem is
tractable.  Notice that nonnegative orthants and Lorentz cones are
self-dual. {When $K$ is chosen to be $\mathbb{R}^n_+$,
	\eqref{unlcpq} reduces to a convex
quadratic program (QP). If $K$ is chosen to be $L^n$,
\eqref{unlcpq} can be recast as a convex quadratically
constrained quadratic program (QCQP)}.

\subsubsection{Uncertainty in both $M$ and $q$ {under an independence
assumption}}{Next, we consider the setting where both $M$  and $q$
are uncertain but the sources of uncertainty are independent.  This is a
	somewhat more challenging problem and  a direct application of the
	results from robust quadratic programming appears difficult.\\}
	{Recall that the map $F$ is said to be monotone over a set $X$ if
		the following holds:
$$
(F(x)-F(y))^T(x-y) \geq 0, \qquad \forall x,y \in X.
$$
{Additionally,  $Mx + q$ is monotone over $\Real_+^n$ if and only if $\frac{1}{2}(M + M^T)$ is positive semidefinite (cf.~\cite{facchinei02finite}). Without loss of generality, we assume that $M$ or $M(u)$ is sysmetric through this subsection, if not, we may always replace
		the matrices by their symmetrized counterparts.} 
For the present, we assume that $q$ is deterministic and reformulate \eqref{unlcp} as follows}:
\begin{equation}\label{unlcpM}
\begin{array}{rrl}
\mbox{min} & t\\
\mbox{subject to} & x^T (M(u) x + q) & \leq \  t, \qquad \forall u \in \mathcal{U},\\
& M(u) x + q & \geq \ 0, \qquad \forall u \in \mathcal{U},\\
& x & \geq \ 0,
\end{array}
\end{equation}
{where $M(u)$ is defined as follows:
\begin{align}\label{defM}
\begin{aligned}
M(u) & \triangleq M_0 + \sum_{l=1}^L u_l M_l, \\
	M_0 & \succeq 0,\\
M_l  & \succeq 0, \qquad  l=1,\hdots,L.
\end{aligned}
\end{align}}
Of course, $M_l, l = 0,\hdots,L$ have also been assumed to be symmetric.
We now present a tractability result for nonnegative restrictions of
	$\Uscr^r_1$ and $\Uscr^r_{\infty}$ defined as follows:
\begin{align}
\Uscr^r_\infty \triangleq \{u : \| u \|_\infty \leq 1, u \geq 0\} \mbox{
	and } \Uscr^r_1 \triangleq \{u : \| u \|_1 \leq 1, u \geq
	0\}.\label{nonneg_U}
\end{align}
{Note that under the definitions of $\Uscr^r_{\infty}$ and $\Uscr^r_{\infty}$, $M(u)$ is always positive semidefinite. This implies that \ref{unlcpM} is convex for each $u \in \Uscr$.}
\begin{proposition} [TRC for uLCP$(M(u),q)$ for $\Uscr_1^r,
	\Uscr^r_\infty$] Consider the
problem \eqref{unlcpM} where $M(u)$ is defined by \eqref{defM} and
$\mathcal{U}$ is chosen either to be $\Uscr^r_{\infty}$ or $\Uscr^r_1$, both
of which are defined in \eqref{nonneg_U}.  Then the uncertain LCP has a tractable robust
counterpart, given by a QP and a QCQP, respectively.
\end{proposition}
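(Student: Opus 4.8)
The plan is to exploit two structural facts about \eqref{unlcpM} under these uncertainty sets. First, for fixed $x \geq 0$ the quantity $x^T(M(u)x+q)$ is \emph{affine} in $u$, with coefficients $x^T M_l x$ that are nonnegative because each $M_l \succeq 0$. Second, each robust inequality $[M(u)x+q]_i \geq 0$ is affine in $u$ and \emph{separable} across the coordinates $u_l$. Since $\Uscr^r_\infty = [0,1]^L$ and $\Uscr^r_1 = \{u \geq 0 : \mathbf 1^T u \leq 1\}$ are polytopes, maximizing or minimizing an affine function over either is attained at a vertex, and the two polytopes have especially convenient vertex structures ($\{0,1\}^L$ for the box, and $\{0,e_1,\dots,e_L\}$ for the simplex).

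First I would handle the objective. Over $\Uscr^r_\infty$, nonnegativity of every $x^T M_l x$ forces the worst case $\max_{u \in \Uscr^r_\infty} x^T(M(u)x+q)$ to be attained at $u = \mathbf 1$, so it equals $x^T\bigl(M_0 + \sum_{l=1}^L M_l\bigr)x + q^T x$; since $M_0 + \sum_l M_l \succeq 0$, this is a single convex quadratic. Over $\Uscr^r_1$, maximizing an affine-in-$u$ function with nonnegative coefficients over the simplex puts all the mass on a single coordinate, so the worst case equals $x^T(M_0 x + q) + \max_l x^T M_l x$; hence the requirement $x^T(M(u)x+q) \leq t$ for all $u \in \Uscr^r_1$ is equivalent to the $L$ convex quadratic constraints $x^T(M_0 + M_l)x + q^T x \leq t$, $l = 1,\dots,L$.

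Next I would handle robust feasibility. Over $\Uscr^r_1$ this is immediate from the vertex set $\{0,e_1,\dots,e_L\}$: the condition $[M(u)x+q]_i \geq 0$ for all $u \in \Uscr^r_1$ is equivalent to the linear constraints $[M_0 x + q]_i \geq 0$ and $[(M_0+M_l)x+q]_i \geq 0$ for all $i$ and $l$. Over $\Uscr^r_\infty = [0,1]^L$ the naive vertex enumeration is exponential, so instead I would use separability: $\min_{u \in [0,1]^L}[M(u)x+q]_i = [M_0 x + q]_i + \sum_{l=1}^L \min\{0,[M_l x]_i\}$, and requiring this to be nonnegative is modeled by introducing $nL$ auxiliary variables $s_{il}$ with $s_{il} \leq 0$, $s_{il} \leq [M_l x]_i$, and $[M_0 x + q]_i + \sum_{l=1}^L s_{il} \geq 0$. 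Assembling the pieces: over $\Uscr^r_\infty$, eliminating $t$ leaves $\min\ x^T\bigl(M_0 + \sum_l M_l\bigr)x + q^T x$ subject to the linear constraints above and $x \geq 0$, a convex QP; over $\Uscr^r_1$, we obtain $\min\ t$ subject to the $L$ convex quadratic constraints together with the linear constraints, a convex QCQP.

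Every step is an elementary LP or convexity fact, so the proof should be short; the only mild subtlety I anticipate is the robust-feasibility reformulation for $\Uscr^r_\infty$, where one must avoid enumerating the $2^L$ vertices of the box and instead use the separability of the constraint in $u$ to obtain a representation of size polynomial in $n$ and $L$. Convexity of every quadratic that appears is guaranteed throughout, since sums of the $M_l \succeq 0$ remain positive semidefinite.
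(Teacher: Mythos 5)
Your proposal is correct and follows essentially the same route as the paper: both arguments reduce the semi-infinite constraints by noting that everything is affine in $u$, use $M_l \succeq 0$ to replace $\max(x^TM_lx,0)$ by $x^TM_lx$, evaluate the max/min over the box coordinatewise and over the simplex at its vertices, and then linearize the residual piecewise-linear terms with auxiliary variables (your $s_{il}$ play exactly the role of the paper's $z_l$, and your explicit vertex enumeration for the simplex feasibility constraints is just the eliminated form of the paper's shared variable $z$). The resulting QP for $\Uscr^r_\infty$ and QCQP for $\Uscr^r_1$ match the paper's.
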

\begin{proof}
\begin{itemize}
\item[(a)] $\Uscr:=\Uscr_{\infty}^r$: 
We first 
derive the robust counterpart of the following constraint:
$$x^T M_0 x + \sum_{l=1}^L u_l x^T M_l x + x^T q \leq t,\qquad \forall u \in
\mathcal{U}_{\infty}^r. $$
But this can be equivalently stated as
$$x^T M_0 x + \max_{u \in \Uscr^r_{\infty}} \left[ \sum_{l=1}^L u_l x^T
M_l x \right]+ x^T q \leq
t.$$
We now evaluate the maximum in the right hand side:
\begin{align*}
\max_{u \in \Uscr^r_{\infty}} \left[ \sum_{l=1}^L u_l x^T
M_l x \right ] = \sum_{l=1}^L \max_{u_l \in [0,1]} \left[ u_l x^T
M_l x \right ] = \sum_{l=1}^L \max(x^TM_l x,0) =  \sum_{l=1}^L x^TM_l x, 
\end{align*}
where the last equality is a consequence of applying the positive
semidefiniteness of $M_l$ for $l = 1, \hdots, L$.  
Consequently, the robust counterpart of \eqref{unlcpM} can be stated as
follows: 
$$
\begin{array}{rrl}
\mbox{min} & t\\
\mbox{subject to} &  x^T (M_0 + \sum_{l=1}^L  M_l) x + x^T q & \leq t,\\
& \displaystyle \min_{u \in \Uscr^r_{\infty}}\left[\left( \sum_{l=1}^L u_l
		[M_l]_{i\bullet}x\right) \right]  + [M_0]_{i\bullet}x + q_i  &
\geq 0, \quad \forall i\\
& x & \geq 0.
\end{array}
$$
We may now simplify the second constraint as follows:
$$\min_{u \in \Uscr^r_{\infty}}\left[ \sum_{l=1}^L u_l
		[M_l]_{i\bullet}x \right] = \sum_{l=1}^L \min_{u_l \in [0,1]}
u_l [M_l]_{i\bullet}x = \sum_{i=1}^L v_l, $$
where 
$$ v_l = \begin{cases}
		[M_l]_{i\bullet}x,  & 	\mbox{ if } [M_l]_{i\bullet}x < 0 \\
		0, 					&   \mbox{ if } [M_l]_{i\bullet}x \geq 0.
		\end{cases} $$
As a consequence, $v_l = -\max(-[M_l]_{i\bullet}x,0)$ for $l = 1, \hdots, L$ and
the TRC may be rewritten as follows:
$$
\begin{array}{rrl}
\mbox{min} & t\\
\mbox{subject to} &  x^T (M_0 + \sum_{l=1}^L  M_l) x + x^T q & \leq t,\\
& \displaystyle  -\sum_{l=1}^L\max(-[M_l]_{i\bullet}x,0)
  + [M_0]_{i\bullet}x + q_i  & \geq 0, \qquad \forall i\\
& x & \geq 0.
\end{array}
$$
{Through the addition of  variables $z_1, \hdots, z_L$, we may eliminate
the max. function, leading to the following quadratic program (QP):} 
$$
\begin{array}{rrl}
\mbox{min} & x^T (M_0 + \sum_{l=1}^L  M_l) x + x^T q\\
\mbox{subject to} & M_0 x + q    -  \sum_{l=1}^L z_l & \geq 0,\\
& M_l x + z_l & \geq 0, \quad \forall l,\\
& z_l & \geq 0,  \quad\forall l, \\
& x & \geq 0.
\end{array}
$$
\item[(b)] $\Uscr:=\Uscr_1^r$: 
In an analogous fashion, when $\Uscr := \Uscr_1^r$, we have the
	following sequence of  equivalence statements for the quadratic constraint:
\begin{align*}
& \qquad x^TM_0x + \max_{u \in \Uscr_1^r} \left[ \sum_{l=1}^L u_lx^TM_l
x\right] + x^Tq   \leq t &\\
& \Leftrightarrow \quad x^TM_0x + \max_{l \in \{1, \hdots, L\}} 
\left[\max(x^TM_lx,0)\right] + x^Tq   \leq t &\\
& \Leftrightarrow \quad x^TM_0x +  \max_{l \in \{1, \hdots, L\}}
[x^TM_lx] + x^Tq   \leq t, &   \\
& \Leftrightarrow \quad x^TM_0x +  
x^TM_lx + x^Tq   \leq t, & \quad l = 1, \hdots, L, 
\end{align*}
where the second equivalence statement follows from the positive
	semidefiniteness of $M_l$. The semi-infinite linear constraint can
	be reformulated as follows:
\begin{align*}
& \qquad \displaystyle \min_{u \in \Uscr_1^r}\left[ \sum_{l=1}^L u_l
		[M_l]_{i\bullet}x \right]  + [M_0]_{i\bullet}x + q_i  
	\geq 0, \, & \forall i\\
& \Leftrightarrow \qquad\displaystyle  -\max_{l \in \{1, \hdots, L\}}
\max(-[M_l]_{i\bullet}x,0)  + [M_0]_{i\bullet}x + q_i  
	\geq 0, \, & \forall i\\
& \Leftrightarrow \qquad\displaystyle  
\max(-[M_l]_{i\bullet}x,0)  \leq [M_0]_{i\bullet}x + q_i  \, & \forall i,l\\
& \Leftrightarrow \qquad\displaystyle \max(-M_lx,0) \leq [M_0]x + q, & l = 1, \hdots, L.
\end{align*}
Finally, by the addition of a variable $z$, we obtain the following
QCQP:
$$
\begin{array}{rrl}
\mbox{min} & t\\
\mbox{subject to} &  x^T (M_0 + M_l) x + x^T q & \leq t,  \quad \forall l\\
& M_0 x + q { \ - \ } z & \geq 0,\\
& {M_l x} + z & \geq 0, \quad \forall l\\
& z & \geq 0,\\
& x & \geq 0.
\end{array}
$$
\end{itemize}
\end{proof}

\noindent{\bf Remark:} Note that while we do not explicitly consider the case when $q$ is
	also uncertain, this may be easily introduced when the uncertainty
	set that prescribes $M(u)$ is  unrelated to that producing $q(u)$.
	On this occasion, we may address each term individually, as in the
	prior subsection. Next, we consider the possibility that $M$ and $q$
	are derived from the same uncertainty sets.

\subsubsection{Uncertainty in {$M$ and $q$ under a dependence
	assumption}}
Next, we extend the realm of applicability of the
tractability result to accommodate uncertainty sets that are more
general than \eqref{defM}. Specifically, we employ an uncertainty set that relies on computing the Cholesky Factorization of $M$, defined next as adopted in~\cite{bertsimas11theory}:
\begin{align}\label{defMC}
\hspace{-0.1in}\begin{aligned}
& \mathcal{U_A}  \triangleq \left\{ (M,q) \mid M = A^T A, A = A_0 + \sum_{l=1}^L \xi_l A_l,q = q_0 + \sum_{l=1}^L \xi_l q_l , \| \xi \|_2 \leq 1\right\}.
\end{aligned}
\end{align}
{We explore this construction {since} it allows for developing tractable robust counterparts when $M(u)$ and $q(u)$ are related.} 
Consequently, \eqref{unlcp} may be recast as follows:
\begin{equation}\label{unlcpCF}
\begin{array}{rrl}
\mbox{min} & t\\
\mbox{subject to} & x^T (Mx + q) & \leq t, \qquad \forall (M,q) \in \mathcal{U_A},\\
& Mx + q & \geq 0, \qquad \forall (M,q) \in \mathcal{U_A},\\
& x & \geq 0.
\end{array}
\end{equation}
For the sake of convenience, we write the first constraint as
\begin{align}\label{cons1} x^TMx + 2x^T\left(\frac{q}{2} \right) - t \leq 0, \qquad \forall (M,q) \in
\Uscr_A \end{align}
{The tractability of constraint \eqref{cons1} follows from 
Theorem 2.3 in \cite{bertsimas11theory} and is formalized without
a proof.}
\begin{lemma}\label{lemmacons1}
	Consider the constraint \eqref{cons1} where $\Uscr_A$ is defined by
\eqref{defMC}. Then the tractable
	counterpart of this constraint is given by \eqref{cholcon}:
	\begin{align} \label{cholcon}
	\left(
	\begin{array}{ccccc}
	-q_0^T x + t - \tau & -\frac{1}{2}q_1^T x, & \hdots, & -\frac{1}{2}q_L^T x & (A_0 x)^T \\
	-\frac{1}{2}q_1^T x & \tau & & & (A_1x)^T\\
	\vdots & & \ddots & &\vdots\\
	-\frac{1}{2}q_L^T x & & & \tau & (A_L x)^T\\
	A_0 x & A_1 x ,& \hdots, & A_L x & I \\
	\end{array}
	\right)
	\succeq 0.
	\end{align}
\end{lemma}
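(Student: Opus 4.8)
The plan is to first unwind the Cholesky structure of $\mathcal{U_A}$ so that \eqref{cons1} becomes a robust \textit{convex} quadratic inequality over a Euclidean ball, and then apply the standard tractability machinery for such constraints --- this is exactly the route behind Theorem~2.3 of \cite{bertsimas11theory}. Since $M = A(\xi)^T A(\xi)$ with $A(\xi) \triangleq A_0 + \sum_{l=1}^L \xi_l A_l$, we have $x^T M x = \|A(\xi)x\|_2^2$; writing $v_l \triangleq A_l x$ for $l = 0,1,\dots,L$ and $d \triangleq (q_1^T x, \dots, q_L^T x)^T$, constraint \eqref{cons1} is equivalent to
$$
\Big\| v_0 + {\textstyle\sum_{l=1}^L} \xi_l v_l \Big\|_2^2 + q_0^T x + d^T \xi - t \; \le \; 0, \qquad \mbox{for all } \xi \mbox{ with } \|\xi\|_2 \le 1 .
$$

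Next I would lift this scalar inequality to a linear matrix inequality that is \textit{affine} in $\xi$. Taking a Schur complement with respect to the identity block, the display above holds for every $\|\xi\|_2 \le 1$ if and only if
$$
\pmat{ t - q_0^T x - d^T \xi & (v_0 + \sum_l \xi_l v_l)^T \\ v_0 + \sum_l \xi_l v_l & I_n } \;\succeq\; 0, \qquad \mbox{for all } \|\xi\|_2 \le 1,
$$
and its left-hand side equals $\mathcal{G}_0 + (R\xi)e_1^T + e_1(R\xi)^T$, where $e_1$ is the first coordinate vector of $\Real^{n+1}$, $\mathcal{G}_0 \triangleq \pmat{ t - q_0^T x & v_0^T \\ v_0 & I_n}$, and $R \triangleq \pmat{ -\tfrac12 d^T \\ v_1 \ \cdots \ v_L}$. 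Thus the perturbation enters \textit{only} through the symmetric term $(R\xi)e_1^T + e_1(R\xi)^T$, which is precisely the setting of Petersen's lemma (equivalently, the losslessness of the S-procedure for a single ball constraint): the robust LMI holds for all $\|\xi\|_2 \le 1$ if and only if there exists a scalar $\tau \ge 0$ with $\mathcal{G}_0 - \tau e_1 e_1^T - \tau^{-1} R R^T \succeq 0$. Undoing this last Schur complement to re-expose $\tau^{-1}RR^T$ yields $\pmat{ \mathcal{G}_0 - \tau e_1 e_1^T & R \\ R^T & \tau I_L } \succeq 0$, and a block permutation together with the substitutions $v_l = A_l x$, $d_l = q_l^T x$ reproduces \eqref{cholcon} verbatim, with the S-procedure multiplier in the role of $\tau$.

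The main obstacle is the exactness of the Petersen/S-procedure step. The \textit{sufficiency} of \eqref{cholcon} --- that any $(x,t,\tau)$ satisfying it satisfies \eqref{cons1} --- is a routine computation: evaluating the quadratic form associated with \eqref{cholcon} on vectors of the shape $(1,\xi,-A(\xi)x)$ and using $\|\xi\|_2 \le 1$ together with $\tau \ge 0$ recovers $x^T M(\xi) x + q(\xi)^T x \le t$ for every $(M,q) \in \mathcal{U_A}$. The converse --- that feasibility of the robust constraint forces the existence of a valid multiplier $\tau$ --- is where one genuinely uses that the uncertainty set is a single Euclidean ball, so that the S-procedure is lossless and the Slater point $\xi = 0$ is available; this is the substance of Theorem~2.3 of \cite{bertsimas11theory}, which I would invoke directly after the structural rewriting above. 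Two minor points deserve a remark in passing: the degenerate case $\tau = 0$ (which forces $R$ to annihilate the relevant subspace) is automatically subsumed once $\tau \ge 0$ is read off the diagonal of \eqref{cholcon}, and no sign assumption such as $t - q_0^T x \ge 0$ is needed, since the $I_n$ block used in the Schur-complement passage is positive definite.
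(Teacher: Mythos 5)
Your argument is correct and follows exactly the route the paper intends: the paper states this lemma without proof, citing Theorem~2.3 of \cite{bertsimas11theory}, and your reduction (Cholesky substitution $x^TMx=\|A(\xi)x\|_2^2$, Schur complement, exact S-procedure/Petersen step for a single Euclidean ball, then a second Schur complement and block permutation to recover \eqref{cholcon}) is precisely the content of that cited theorem. The sufficiency check via the test vectors $(1,\xi,-A(\xi)x)$ and the handling of $\tau\ge 0$ are also sound, so your writeup simply supplies the details the paper delegates to the reference.
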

{However, it is more challenging to construct a robust
counterpart of the constraint given by \eqref{cons2}}:
\begin{align}\label{cons2}
Mx + q \geq 0, \qquad \forall (M, q) \in \mathcal{U}_A.
\end{align}
In fact, this is the key departure from the result provided in ~\cite{bertsimas11theory}.
For purposes  of convenience and clarity, we rewrite $\mathcal{U}_A$ in terms of $A_0, q_0$ and $A_l ,q_l,l=1,\hdots,L$:
\begin{align} \label{defMC1}
\mathcal{U}_A & \triangleq \left\{(M,q)
\mid M = A_0^T A_0 + \sum_{l=1}^L (A_l^T A_0 + A_0^T A_l) \xi_l  +  \sum_{l < m}(A_l^T A_m + A_m^T A_l) \xi_l \xi_m +
		\sum_{l=1}^L A_l^T A_l \xi_l^2, \right.\notag\\
		& \left. q = q_0 + \sum_{l=1}^L q_l
		\xi_l, \qquad \| \xi \|_2 \leq 1 \right\}.
\end{align}
{We may utilize \cite[Lemma 14.3.7]{bental09robust} in deriving the
tractability of \eqref{cons2}.}
{
\begin{lemma}\label{propcons2}
Consider the constraint \eqref{cons2} where $\Uscr_A$ is defined by
\eqref{defMC1}.  Then the semi-infinite constraint has a tractable robust  counterpart, which will be presented as \eqref{SDPTRC}.
\end{lemma}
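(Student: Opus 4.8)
The plan is to reduce the semi-infinite vector inequality \eqref{cons2} to a finite family of linear matrix inequalities by noting that, componentwise, it merely asks a quadratic function of the perturbation $\xi$ to remain nonnegative on the Euclidean unit ball, and then invoking the S-lemma in the form of \cite[Lemma 14.3.7]{bental09robust}.

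First I would fix a row index $i\in\{1,\dots,n\}$ and expand $[Mx]_i+q_i$ using the representation \eqref{defMC1} of $\Uscr_A$. Since $M$ itself does not depend on $x$, the map $\xi\mapsto [Mx]_i+q_i$ is a quadratic polynomial in $\xi$ whose coefficients are affine in $x$; writing
\[
[Mx]_i+q_i \;=\; \xi^T\Gamma_i(x)\,\xi + \beta_i(x)^T\xi + \alpha_i(x),
\]
the symmetric matrix $\Gamma_i(x)$ collects the contributions $\tfrac12[(A_l^TA_m+A_m^TA_l)x]_i$ and $[A_l^TA_lx]_i$, the vector $\beta_i(x)$ collects $[(A_l^TA_0+A_0^TA_l)x]_i+[q_l]_i$, and the scalar $\alpha_i(x)=[A_0^TA_0x]_i+[q_0]_i$. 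Each of $\Gamma_i(x),\beta_i(x),\alpha_i(x)$ is affine in $x$, because $x$ enters every coefficient exactly once, multiplied by a fixed matrix or vector assembled from the data $A_0,\dots,A_L,q_0,\dots,q_L$. Hence \eqref{cons2} is equivalent to requiring, for every $i$, that $\xi^T\Gamma_i(x)\xi+\beta_i(x)^T\xi+\alpha_i(x)\ge 0$ whenever $1-\xi^T\xi\ge 0$.

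Next I would apply the S-lemma to each of these $n$ decoupled robust constraints. Because the ball constraint $1-\xi^T\xi\ge 0$ is strictly feasible at $\xi=0$, the S-lemma holds with equality, so the $i$-th constraint is equivalent to the existence of a multiplier $\lambda_i\ge 0$ satisfying the $(L+1)\times(L+1)$ linear matrix inequality
\[
\begin{pmatrix} \alpha_i(x)-\lambda_i & \tfrac12\beta_i(x)^T \\ \tfrac12\beta_i(x) & \Gamma_i(x)+\lambda_i I_L \end{pmatrix} \succeq 0 .
\]
Intersecting these $n$ LMIs in the variables $(x,\lambda_1,\dots,\lambda_n)$ with the single semidefinite constraint \eqref{cholcon} from Lemma~\ref{lemmacons1}, and retaining the epigraph variable $t$ and the sign restriction $x\ge 0$, produces a finite-dimensional semidefinite program; this is the reformulation to be displayed as \eqref{SDPTRC}, and tractability follows since an SDP is solvable to arbitrary accuracy in polynomial time.

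The main obstacle, and the only place where genuine care is needed, is the bookkeeping in the expansion step: one must check that no term of $[Mx]_i+q_i$ has degree higher than two in $\xi$ (this uses that $q$ is affine in $\xi$ and that $M=A^TA$ with $A$ affine in $\xi$, so the cross terms of $M$ are exactly bilinear) and that the $\xi$-coefficients are affine — not quadratic — in $x$. This last point is precisely the departure from \cite{bertsimas11theory}: for the objective constraint \eqref{cons1} the variable $x$ appears twice, inside $x^TMx$, so that analysis does not carry over and a separate argument for the vector inequality \eqref{cons2} is required. A secondary point is that the S-lemma is applied to each component $i$ individually; a joint S-procedure over all $n$ components at once would generally be conservative, whereas the componentwise application is lossless because the ``for all $\xi$'' quantifier distributes over the $n$ scalar inequalities.
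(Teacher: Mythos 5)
Your argument is correct and arrives at the same reformulation as the paper, but it gets there with a different key tool. The paper does not invoke the S-lemma directly: it lifts the perturbation to $\widehat{\xi} = \pmat{ & \xi^T \\ \xi & \xi\xi^T}$, observes that the worst-case value of the $i$th component is a linear functional of $\widehat\xi$, replaces the lifted feasible set $\Zcr$ by its exact convex hull $\widehat{\Zcr}$ via \cite[Lemma 14.3.7]{bental09robust} (note: that lemma is the convex-hull characterization, not the S-lemma, so your citation is slightly misplaced even though the fact you actually use is standard), and then passes to the SDP dual of the resulting lifted program, checking Slater's condition to justify attainment. Your route applies the S-lemma componentwise: since $\xi=0$ strictly satisfies $1-\xi^T\xi\ge 0$, nonnegativity of the quadratic $\xi^T\Gamma_i(x)\xi+\beta_i(x)^T\xi+\alpha_i(x)$ on the ball is exactly the existence of $\lambda_i\ge 0$ with
\[
\pmat{\alpha_i(x)-\lambda_i & \tfrac12\beta_i(x)^T \\ \tfrac12\beta_i(x) & \Gamma_i(x)+\lambda_i I_L}\succeq 0 .
\]
This is the same LMI as the paper's \eqref{SDPTRC} up to the reparametrization $\lambda_i=-y_{i,1}$ and elimination of $y_{i,2}$ (the paper's scalar inequality $y_{i,1}+y_{i,2}+a_i(x)\ge 0$ just lets $-y_{i,2}$ be absorbed into the top-left corner), so the two reformulations are equivalent; the S-lemma and the exactness of the lifted convex hull for a single ball constraint are dual faces of the same fact. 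What your version buys is directness — no need to verify strong duality for the lifted SDP. What the paper's version buys is that it slots into the Ben-Tal--El Ghaoui--Nemirovski machinery that also handles the cases where only an \emph{approximate} semidefinite-representable hull of $\widehat{\Zcr}$ is available (e.g.\ the $\ell_\infty$ perturbation set discussed in Remark (i) after Theorem~\ref{RC_unlcp_mon}), where the exact S-lemma is unavailable. Your bookkeeping of $\Gamma_i,\beta_i,\alpha_i$ matches the paper's \eqref{cholprob}, your observation that the componentwise application is lossless while a joint S-procedure would not be is correct, and your identification of why \eqref{cons2} requires an argument separate from \cite{bertsimas11theory} is exactly the point the paper makes. (Strictly speaking, the final assembly with \eqref{cholcon} into one SDP belongs to Theorem~\ref{RC_unlcp_mon} rather than to this lemma, but that is harmless.)
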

}
\begin{proof}
We begin by noticing that obtaining a feasible solution of \eqref{cons2}
requires solving the following $i$th optimization problem for $i = 1, \hdots, n$:
\begin{align} \notag
\min \quad & \sum_{l=1}^L \left( [A_l^T A_0 + A_0^T A_l]_{i\bullet} x +
		[q_l]_i\right) \xi_l  + \sum_{1 \leq l < m \leq L}  \left[A_l^T A_m + A_m^T
A_l\right]_{i\bullet} x \xi_l \xi_m +\sum_{l=1}^L \left[A_l^T A_l\right]_{i\bullet} x \xi_l^2, \\
\st  \quad & \| \xi \|_2 \leq 1. \label{cholprob}
\end{align}
{We may compactly rewrite \eqref{cholprob} as follows:}
\begin{align}\label{nonaffun1}
\begin{aligned}
\min \quad & b_i(x)^T \xi + \xi^T C_i(x) \xi\\
\st \quad & \| \xi \|_2 \leq 1,
\end{aligned}
\end{align}
where $b_i: \mathbb{R}^n \rightarrow \mathbb{R}^L, C_i: \mathbb{R}^n
	\rightarrow \mathbb{R}^{L \times L}$ are all linear functions of
	$x$. We now define the following:
\begin{align}
\begin{aligned}
& \widehat{\xi} = \pmat{ & \xi^T \\ \xi & \xi \xi^T}, M_i(x)=\pmat{&
	\frac{1}{2} b_i^T(x) \\ \frac{1}{2} b_i(x) & C_i(x)}, \mbox{ and }  \mathcal{Z} = \left\{ \pmat{ & \xi^T \\ \xi & \xi \xi^T} \mid \| \xi \|_2 \leq 1 \right\}
\end{aligned}
\end{align}
{Then the QCQP \eqref{nonaffun1} is equivalent to the
	following matrix optimization problem}:
\begin{align}\label{nonaffun2}
\min_{\widehat{\xi} \in \mathcal{Z}} \quad & \langle\widehat{\xi}, M_i(x)\rangle
\end{align}
where $\langle A, B\rangle = Tr(A^TB)$. {Since the objective function is
	linear in $\widehat \xi$}, we may extend the feasible region
	$\mathcal{Z}$ to its convex hull $\widehat{\mathcal{Z}}$ which is
	given by $ \widehat {\cal Z} \triangleq 
	\mbox{conv}\{\mathcal{Z}\}$. By Lemma 14.3.7 from \cite{bental09robust}, we have that
$$
\widehat{\mathcal{Z}} = \left\{ \widehat{\xi} = \pmat{& w^T \\ w & W} \in \textbf{S}^{L + 1} \mid \pmat{1 & w^T \\ w & W} \succeq 0, Tr(W) \leq 1\right\},
$$
{where ${\bf S}^{N+1}$ represents the cone of symmetric positive semidefinite
matrices. Using variable replacement, \eqref{nonaffun2} is equivalent to the
following semidefinite program}:
\begin{align}\label{nonaffun3}
\begin{aligned}
\min_{
 X \in \textbf{S}^{L+1}} \quad  \langle X, M_i(x)\rangle & \\
\st \quad  \left\langle X, \pmat{0 & \\  & I} \right\rangle & \leq 1,
 \quad
 \left\langle X, \pmat{1 & \\ &} \right\rangle  = 1.
\end{aligned}
\end{align}
The feasible region of \eqref{nonaffun3} obviously acquires a
	nonempty relative interior. Therefore its dual optimum can be
	obtained and thus we can reformulate the $i$th constraint in
	\eqref{cons2} as the following SDP constraint:
\begin{align} \label{SDPTRC}
\begin{aligned}
y_{i,1} + y_{i,2} + a_i (x) & \geq 0,\\
y_{i,1}\pmat{0 & \\  & I} + y_{i,2}\pmat{1 & \\ &} & \preceq M_i(x),\\
y_{i,1} & \leq 0.
\end{aligned}
\end{align}
where $ a_i (x) = [A_0^TA_0]_{i\bullet}x + [q_0]_i$.
\end{proof}

Our main result of this subsection can be seen to follow from
	Lemma~\ref{lemmacons1} and Lemma~\ref{propcons2}. 
\begin{theorem}[TRC for uLCP$(M,q)$ for $(M,q) \in
	\Uscr_A$]\label{RC_unlcp_mon}
Consider the uncertain LCP \eqref{unlcpCF}. Then this semi-infinite
program has a tractable robust counterpart {given by the following
	SDP: 
$$
\begin{array}{rrlr}
\displaystyle \min_{t,\tau,x,y_1,y_2} & t \\
\textrm{\em subject to}  & \tau \pmat{-1&&\\&I_L&\\&&0_{n\times n}} + t \pmat{1&\\&0_{(L+n)\times(L+n)}}+M_0(x) &\succeq 0,\\
& y_{i,1}\pmat{0 & \\  & -I_L} + y_{i,2}\pmat{-1 & \\ & 0_{L\times L} }
+ M_i(x) & \succeq 0, & \forall i = 1, \hdots, n \\
& y_{i,1} + y_{i,2} + a_i (x) & \geq 0,  &\forall i = 1, \hdots, n\\
& y_{i,1} & \leq 0, &\forall i = 1, \hdots, n\\
& x & \geq 0.
\end{array}
$$}
where $M_0: \mathbb{R}^n \rightarrow \mathbb{R}^{(L+n+1)\times(L+n+1)}, M_i: \mathbb{R}^n \rightarrow \mathbb{R}^{(L+1)\times(L+1)}, a_i: \mathbb{R}^n \rightarrow \mathbb{R}$ are all linear functions of $x$.
\end{theorem}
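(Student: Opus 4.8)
The plan is to assemble the claimed SDP directly from the two tractability lemmas already established, treating the three constraints of \eqref{unlcpCF} one at a time and then collecting them, so that the ``proof'' of the theorem is essentially bookkeeping of block dimensions and of the identifications $M_0,M_i,a_i$ with the data of $\Uscr_A$.

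First I would dispose of the worst-case objective. Writing the first constraint of \eqref{unlcpCF} in the form \eqref{cons1}, namely $x^TMx + 2x^T(q/2) - t \le 0$ for all $(M,q)\in\Uscr_A$, Lemma~\ref{lemmacons1} (which rests on Theorem~2.3 of \cite{bertsimas11theory}) replaces it by the single LMI \eqref{cholcon} in the extra scalar variables $t$ and $\tau$. I would then note that \eqref{cholcon} is affine in $(t,\tau,x)$ and separate coefficients: the $(1,1)$ entry contributes $t$ with coefficient $\mathrm{diag}(1,0_{(L+n)\times(L+n)})$ and contributes $\tau$ with coefficient $\mathrm{diag}(-1,I_L,0_{n\times n})$, while everything depending on $x$ (the entries $-q_0^Tx$, $-\frac{1}{2}q_l^Tx$, $A_lx$) together with the constant block $I_n$ is gathered into an affine map $M_0:\mathbb{R}^n\to\mathbb{R}^{(L+n+1)\times(L+n+1)}$. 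This yields the first LMI of the theorem, $\tau\,\mathrm{diag}(-1,I_L,0_{n\times n}) + t\,\mathrm{diag}(1,0_{(L+n)\times(L+n)}) + M_0(x)\succeq 0$.

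Next I would handle the semi-infinite feasibility constraint \eqref{cons2}. For each $i=1,\dots,n$, the $i$th component constraint $[Mx+q]_i\ge 0$ for all $(M,q)\in\Uscr_A$ reduces to the inner QCQP \eqref{cholprob}, rewritten as \eqref{nonaffun1}, which is exactly what Lemma~\ref{propcons2} resolves: lifting to $\widehat\xi$, passing to the convex hull $\widehat{\mathcal Z}$ via \cite[Lemma~14.3.7]{bental09robust}, and dualizing the SDP \eqref{nonaffun3} (whose feasible set has nonempty relative interior, so strong duality holds and the reformulation is \emph{exact}, not merely conservative) produces the system \eqref{SDPTRC}. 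Rearranging $y_{i,1}\,\mathrm{diag}(0,I_L) + y_{i,2}\,\mathrm{diag}(1,0_{L\times L}) \preceq M_i(x)$ into $y_{i,1}\,\mathrm{diag}(0,-I_L) + y_{i,2}\,\mathrm{diag}(-1,0_{L\times L}) + M_i(x)\succeq 0$, and keeping $y_{i,1}+y_{i,2}+a_i(x)\ge 0$ and $y_{i,1}\le 0$, reproduces the second, third, and fourth lines of the theorem, with $M_i:\mathbb{R}^n\to\mathbb{R}^{(L+1)\times(L+1)}$ and $a_i(x)=[A_0^TA_0]_{i\bullet}x+[q_0]_i$ as in the proof of Lemma~\ref{propcons2}.

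Finally I would glue the pieces together: \eqref{unlcpCF} minimizes $t$ over $x\ge 0$ subject to \eqref{cons1} and the $n$ scalar constraints constituting \eqref{cons2}; substituting the equivalent reformulations just derived gives precisely the displayed SDP in $(t,\tau,x,y_1,y_2)$, with $1+n$ linear matrix inequalities, $2n$ scalar inequalities and the bound $x\ge 0$ — a finite-dimensional conic program whose size is polynomial in $(n,L)$, hence tractable. The only genuine mathematical content, already discharged in the two lemmas, is the exactness of the two inner maximizations (the $\mathcal{S}$-lemma/Cholesky argument for \eqref{cons1} and the convex-hull-plus-duality argument for \eqref{cons2}); the remaining obstacle is purely clerical, namely to keep the signs and the placement of the $I_L$, $0$, and $1$ blocks consistent between \eqref{cholcon}/\eqref{SDPTRC} and the compact form in the statement, and to absorb the constant block $I_n$ inside \eqref{cholcon} into $M_0(x)$ rather than into the $t$- or $\tau$-coefficient.
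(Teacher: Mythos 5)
Your proposal is correct and follows essentially the same route as the paper, which simply observes that the theorem follows from Lemma~\ref{lemmacons1} (yielding the LMI \eqref{cholcon} for the worst-case objective constraint) and Lemma~\ref{propcons2} (yielding the systems \eqref{SDPTRC} for the $n$ feasibility constraints), exactly as you assemble them. Your explicit bookkeeping of the $t$- and $\tau$-coefficient blocks and the sign flip in \eqref{SDPTRC} supplies the details the paper leaves implicit, and is consistent with the stated SDP.
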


\noindent {\bf Remark:}
\bi
\item[(i)] When the uncertainty set $\|\xi\|_2 \leq 1$ is replaced by
	either $\|\xi\|_1 \leq 1$ or $\|\xi\|_{\infty} \leq 1$,
	Lemma~\ref{lemmacons1} does not hold. What we
{may} do instead is to enlarge the uncertainty set to get a tractable robust
counterpart. In {the} case of $\|\xi\|_{\infty} \leq 1$, \cite[Lemma
14.3.9]{bental09robust}  provides a semidefinite representable set that
contains $\widehat{\mathcal{Z}}$. On the other hand, we
{may enlarge $\|\xi\|_1 \leq 1$ or $\|\xi\|_{\infty} \leq 1$ to their circumscribed
	spheres representing a scaling of $\Uscr_2$ allowing for the
	construction of tractable robust counterparts} of \eqref{cons2}.
\item[(ii)] {We note that \cite{wu2011robust} claims a similar result
(Theorem 3.2) as Theorem ~\ref{RC_unlcp_mon}. However, there appears to
be an issue in that the tractability of ~\eqref{cons2}
is not proved and does not seem to follow directly.}\ei

\subsection{Tractable uncertain non-monotone LCPs} \label{sec:32}
When the matrix $\frac{1}{2}(M(u)^T + M(u))$ is not {necessarily}
positive semidefinite for {every} $u \in \mathcal{U}$, $M(u)x+q(u)$
	is no longer monotone for every $u$. {Consequently,} the problem
	\eqref{unlcpM} no longer has convex constraints for {every}
realization of $u$. {As we proceed to show,  we may still obtain a tractable robust counterpart
under a suitably defined uncertainty set on $M(u)$ with the
caveat that $M(u)$ is unrelated to $q(u)$.} We begin by defining the
uncertainty set for $M(u)$.
\begin{align}\label{defM2}
\begin{aligned}
M(u) & \triangleq M_0 + \sum_{l=1}^L u_l M_l,\\
\frac{1}{2}(M_0 + M_0^T) & \succeq 0,\\
\frac{1}{2}(M_l + M_l^T) & \succeq 0, \qquad \forall l=1,\hdots,L.
\end{aligned}
\end{align}
{Without loss of generality, we assume that $M(u), M_0$ and $M_l$ are
	symmetric for $l = 1, \hdots, L$; if not, we may always replace
		the matrices by their symmetrized counterparts.  {Thus far, the definition of $M(u)$ is the same as \eqref{defM}. However, we will take $\Uscr$ as $\Uscr_1,\Uscr_2,\Uscr_\infty$. This approach leads to indefinite matrix thus nonmonotone affine map for some scenarios.} The tractability of the robust
		counterpart of the uncertain nonmonotone LCP is proved next.}
\begin{proposition} [{TRC for non-monotone
	uLCP$(M(u),q)$}]\label{prop_nmon_lcp} Consider the
problem \eqref{unlcpM}. {Suppose $M$ is defined by \eqref{defM2} and
$\mathcal{U}$ is either $\Uscr_{\infty}, \Uscr_1$ or $\Uscr_2$.  
Then this problem admits a tractable robust
counterpart.}
\end{proposition}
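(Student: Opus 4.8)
The plan is to isolate the two semi-infinite constraints in \eqref{unlcpM} --- the quadratic epigraph constraint $x^{T}(M(u)x+q)\le t$ and the linear feasibility constraint $M(u)x+q\ge 0$ --- and, for each of $\Uscr=\Uscr_{\infty},\Uscr_{1},\Uscr_{2}$, to push the worst case over $u$ inside in closed form. The single fact that drives everything is that each $M_{l}\succeq 0$ (by \eqref{defM2}), so $x^{T}M_{l}x\ge 0$ for all $x$; hence the inner linear-in-$u$ function $\sum_{l}u_{l}(x^{T}M_{l}x)$ has nonnegative coefficients. This is exactly what makes the worst case collapse to a convex expression even though $M(u)=M_{0}+\sum_{l}u_{l}M_{l}$ is indefinite --- and the affine map therefore nonmonotone --- for those $u$ having some $u_{l}<0$. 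This is the ``hidden convexity'' advertised in the introduction.

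For the quadratic constraint, write $\max_{u\in\Uscr}x^{T}(M(u)x+q)=x^{T}M_{0}x+x^{T}q+\max_{u\in\Uscr}\sum_{l}u_{l}(x^{T}M_{l}x)$ and evaluate the last term using the same norm-duality identities employed in the monotone case: it equals $\sum_{l}|x^{T}M_{l}x|=\sum_{l}x^{T}M_{l}x$ for $\Uscr_{\infty}$, $\max_{l}x^{T}M_{l}x$ for $\Uscr_{1}$, and $\bigl(\sum_{l}(x^{T}M_{l}x)^{2}\bigr)^{1/2}$ for $\Uscr_{2}$, the absolute values disappearing precisely because $x^{T}M_{l}x\ge 0$. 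In the first case the constraint becomes $x^{T}\bigl(M_{0}+\sum_{l}M_{l}\bigr)x+x^{T}q\le t$ with a positive semidefinite matrix; in the second it becomes the $L$ convex quadratic constraints $x^{T}(M_{0}+M_{l})x+x^{T}q\le t$; in the third, introducing auxiliary variables $r\ge x^{T}M_{0}x$ and $s_{l}\ge x^{T}M_{l}x$ (rotated second-order-cone constraints, legitimate since $M_{0},M_{l}\succeq 0$) reduces it to the single SOC constraint $r+\|s\|_{2}+q^{T}x\le t$. In every case the result is a convex QP, QCQP, or SOCP reformulation.

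For the linear constraint, the $i$th row requires $[M_{0}]_{i\bullet}x+q_{i}+\min_{u\in\Uscr}\sum_{l}u_{l}[M_{l}]_{i\bullet}x\ge 0$; here there is no sign control on $[M_{l}]_{i\bullet}x$, so the inner minimum is $-\sum_{l}|[M_{l}]_{i\bullet}x|$, $-\max_{l}|[M_{l}]_{i\bullet}x|$, and $-\bigl(\sum_{l}([M_{l}]_{i\bullet}x)^{2}\bigr)^{1/2}$ for $\Uscr_{\infty},\Uscr_{1},\Uscr_{2}$ respectively. The first two are handled by the standard robust-LP lifting: add $z_{il}$ (resp.\ a common $z_{i}$) with $-z_{il}\le[M_{l}]_{i\bullet}x\le z_{il}$ and impose $[M_{0}]_{i\bullet}x+q_{i}-\sum_{l}z_{il}\ge 0$ (resp.\ $[M_{0}]_{i\bullet}x+q_{i}-z_{i}\ge 0$), all linear; the third is the SOC constraint $\|([M_{l}]_{i\bullet}x)_{l}\|_{2}\le[M_{0}]_{i\bullet}x+q_{i}$. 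Combining this with the reformulated quadratic constraint and $x\ge 0$ yields, case by case, a convex QP, a convex QCQP, and an SOCP, establishing that \eqref{unlcpM} has a tractable robust counterpart.

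The calculations are routine; the point worth stressing, and the only place to be careful, is the asymmetry between the two constraints. On the epigraph side the positive semidefiniteness of the $M_{l}$ both makes $|x^{T}M_{l}x|$ collapse and places the vector $(x^{T}M_{l}x)_{l}$ in the nonnegative orthant, so the $\ell_{2}$-composition is convex and one genuinely recovers convexity despite non-monotonicity; on the feasibility side no such collapse occurs, but robust-linear-programming duality still delivers a linear or conic-quadratic reformulation. I would also explicitly check that the auxiliary epigraph variables such as $r\ge x^{T}M_{0}x$ correspond to legitimate convex (rotated second-order-cone) constraints, which holds because $M_{0}\succeq 0$ in \eqref{defM2}. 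No step presents a genuine obstacle; the content of the result is the observation that taking the worst case over $\Uscr_{1},\Uscr_{2},\Uscr_{\infty}$ --- which here includes indefinite $M(u)$ --- does not destroy tractability.
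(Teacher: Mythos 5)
Your proof is correct and follows essentially the same route as the paper's: split \eqref{unlcpM} into the epigraph and feasibility constraints, evaluate the inner maximum/minimum via the standard norm-duality identities for $\Uscr_\infty,\Uscr_1,\Uscr_2$, and use $x^TM_lx\ge 0$ (from $M_l\succeq 0$ in \eqref{defM2}) to drop the absolute values on the quadratic side while lifting the linear side with auxiliary variables, yielding a convex QP, QCQP, and conic program respectively. The only cosmetic difference is in the $\Uscr_2$ case, where you give an explicit rotated second-order-cone lifting of $x^TM_0x+\bigl(\sum_l(x^TM_lx)^2\bigr)^{1/2}+q^Tx\le t$, whereas the paper verifies convexity of $\sqrt{\sum_{l}(x^TM_lx)^2}$ directly via its second derivative; both are valid.
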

\begin{proof} \begin{itemize}
\item[(a)] $\mathcal{U}: = \{u : \| u \|_\infty \leq 1\}$: We begin
	by determining the robust counterpart of the following constraint:
\begin{align}
\label{U1defM}
x^T M_0 x + \sum_{l=1}^L u_l x^T M_l x + x^T q \leq t,\quad \forall u \in \mathcal{U}. \end{align}
This may be equivalently stated as
$$x^T M_0 x + \max_{u \in \Uscr_{\infty}} \left[\sum_{l=1}^L u_l x^T M_l x \right] + x^T q  \leq
t.$$
By noting that the summation can be written from $l = 1, \hdots,
	K$, through the application of $\max_{\| u \|_\infty \leq 1} \eta^Tu = \|\eta\|_1, $
it follows that
$$x^T M_0 x +  \sum_{l=1}^L |x^T M_l x| +  x^T q \leq
t.$$
Since $M_l \succeq 0$ for every $l$, $|x^T M_l x| = x^T M_l x$.
{Consequently, the robust counterpart of \eqref{U1defM} can be stated as
	the convex constraint:
$$x^T \left( M_0  +  \sum_{l=1}^L  M_l \right) x + x^T q \leq
t.$$
}
Similarly, the constraint $M(u)x + q \geq 0, \quad \forall u \in
	\Uscr$ can be reformulated as follows:
\begin{align*}
 \quad &	M(u) x + q \geq 0, \quad  \forall u \in \Uscr \\
	\Leftrightarrow \quad  & M_0 x + \sum_{l=1}^L u_l M_lx + q \geq 0, \quad  \forall u \in \Uscr 
	\Leftrightarrow \quad   M_0 x + \min_{u \in \Uscr_{\infty}} \left[ \sum_{l=1}^L u_l M_lx \right] + q \geq 0,  \\
\Leftrightarrow \quad  & M_0 x  -\max_{u \in \Uscr_{\infty}} \left[
\sum_{l=1}^L u_l \left[- M_lx \right]  \right] + q \geq 0  
\Leftrightarrow \quad   M_0 x - 
\sum_{l=1}^L | -M_lx| + q \geq 0,\\
\Leftrightarrow \quad  & M_0 x - 
\sum_{l=1}^L | M_lx|  + q \geq 0.
\end{align*}
Through the addition of variables, $z_1, \hdots, z_{L}$, the
	resulting robust counterpart can then be stated as the following
		convex QP:
\begin{align}
\begin{array}{rrl}
\mbox{min} &  x^T (M_0 + \sum_{l=1}^L  M_l) x + x^T q\\
\mbox{subject to} & M_0 x + q - \sum_{l=1}^{L} z_l  & \geq 0,\\
&  z_l \geq M_l x & \geq -z_l, \qquad l = 1, \hdots, L \\
& z_l, x & \geq 0. \qquad \quad l = 1, \hdots, L
\end{array}
\end{align}

\item[(b)] $\Uscr:=\Uscr_1$ {As in (a), we begin
	by determining the robust counterpart of \eqref{U1defM}:}
$$x^T M_0 x + \max_{u \in \Uscr_1} \left[\sum_{l=1}^L u_l x^T M_l x \right] + x^T q  \leq
t.$$
{By noting that the summation can be written from $l = 1, \hdots,
	L$, through the application of $\max_{\| u \|_1\leq 1} \eta^Tu =
		\|\eta\|_{\infty}, $
it follows  that
$$x^T M_0 x +  \max_{l \in \{1, \hdots, L\}} |x^T M_l x|+ x^T q \leq
t.$$
But $x^TM_lx \geq 0$ for all $x$ and $l = 1, \hdots, L$  implying that
this constraint can be rewritten as follows:
$$x^T M_0 x + \max_{l \in \{1, \hdots, L\}} \{x^T M_l x \} + x^T q \leq
t.$$
The max. function can be eliminated by replacing each constraint by a
finite collection:
$$\begin{aligned}
	x^T M_0 x +   x^T M_l x + x^T q &  \leq t, \quad 
		l = 1, \hdots, L
	\end{aligned}$$
}
{Similarly, the semi-infinite constraint $M(u)x + q \geq 0, \quad \forall u \in
	\Uscr$ can be reformulated as follows:
\begin{align*}
 \quad &	M(u) x + q \geq 0, \quad \forall u \in \Uscr 
	\Leftrightarrow \quad   M_0 x + \sum_{l=1}^L u_l M_lx + q \geq 0, \quad \forall u \in \Uscr \\
	\Leftrightarrow \quad  & M_0 x + \min_{u \in \Uscr_{1}} \left[ \sum_{l=1}^L u_l M_lx \right] + q \geq 0
\Leftrightarrow \quad   M_0 x  -\max_{u \in \Uscr_{1}} \left[
\sum_{l=1}^L u_l \left[- M_lx \right] \right] + q \geq 0,  \\
\Leftrightarrow \quad  & M_0 x - 
\max_{l \in \{1, \hdots, L\}} \left[| -M_lx|\right]+ q \geq 0
\Leftrightarrow  \quad  \left\{\begin{aligned}
 		\quad  & M_0 x - z + q \geq 0, \\
 \mbox{ } \quad &   z  \geq M_lx  \geq -z, \quad  \forall l = 1, \hdots, L\end{aligned}\right\}. 
\end{align*}
Consequently, the TRC is given by the
	following:
$$
\begin{array}{rrl}
\mbox{min} & t\\
\mbox{subject to} &  x^T (M_0 + M_l) x + x^T q & \leq t, \qquad  \forall l\\
& M_0 x + q - {z} & \geq 0, \\
& z \geq M_lx & \geq -z, \qquad \forall l \\
& x,z & \geq 0.
\end{array}
$$
}
\item[(c)] $\Uscr:=\Uscr_2$: {We first
consider constraint \eqref{U1defM} which can be equivalently stated as
	follows: 
\begin{align*}
 &\qquad  x^T M_0 x + \max_{u \in \Uscr_2} \left[\sum_{l=1}^L u_l x^T M_l x \right] + x^T q  \leq
t \\
\Leftrightarrow & \qquad x^T M_0 x + \sqrt{ \sum_{l=1}^L ( x^T M_l x)^2  } + x^T q  \leq
t. 
\end{align*}
Similarly, the constraint $M(u) x+ q \geq 0$ for every $u \in \Uscr
	\equiv \Uscr_2$
can be reformulated as follows:
\begin{align*}
 \quad &	M(u) x + q \geq 0, \quad & \forall u \in \Uscr \\
		\Leftrightarrow \quad  & M_0 x + \min_{u \in \Uscr_{2}} \left[ \sum_{l=1}^L u_l M_lx \right] + q \geq 0,  \\
\Leftrightarrow \quad  & M_0 x  -\max_{u \in \Uscr_{2}} \left[
\sum_{l=1}^L u_l \left[- M_lx \right] \right] + q \geq 0,  \\
\Leftrightarrow \quad  & [M_0]_{i\bullet} x - \sqrt{ \sum_{l=1}^L\left[|
	[M_l]_{i\bullet}x|\right]^2}+ q_i \geq 0, \qquad \forall i. 
\end{align*}
Consequently, the robust counterpart of \eqref{unlcpM} can be stated as:
\begin{align} \label{nonmono1}
\begin{array}{rrl}
\min  & t \\
\st & x^T M_0 x + \sqrt{ \sum_{l=1}^L ( x^T M_l x)^2  } + x^T q &  \leq
t \\
& [M_0]_{i\bullet} x - \sqrt{ \sum_{l=1}^L\left[|
	[M_l]_{i\bullet}x|\right]^2}+ q_i & \geq 0, \qquad \forall i \\
& x & \geq 0.
\end{array}
\end{align}}
{By examining the second derivative of	$f(x)$  defined as
		$$ f(x) \triangleq  \sqrt{\sum_{l=1}^L (x^T M_l x)^2 },$$ 
	it can be  concluded that $f$ is a convex function. This
		result indicates that the the left hand side of the first
		constraint in \eqref{nonmono1} is a convex function, implying
		that the resulting feasible region is convex. The $n$ remaining
					   inequalities are in the form of second-order cone
					   constraints and are therefore tractable convex
					   constraints. It follows that \eqref{nonmono1}
		is a convex program.}
\end{itemize}
\end{proof}
To get a geometric understanding of the prior proposition, we consider the following example.

\noindent {\em Example:} {Consider the case when $M(u)$ and $q$ are defined as follows:
$$M(u) = u \pmat{1& 0\\0 & 2 },  q =
\pmat{2\\2}, \mbox{ and } \Uscr = \{u \mid -1 \leq u \leq 1\}.$$ It can be observed that the constraint: $x^T(M(u)x+q) \leq t$ is nonconvex when $u < 0$. Note that this constraint can be rewritten as $ u(x_1^2 + 2x_2^2)+2x_1+2x_2
\leq t$ and we defined $R_u$ as follows:
 $$ R_u \triangleq \left\{(x_1,x_2,t) \mid u(x_1^2 + 2x_2^2) + 2x_1+2x_2
\leq t\right\}.$$ Then $R_{-1}$ denotes the region above the surface shown in Fig.
\ref{hiddenconvexity} labeled $u=-1$, clearly a nonconvex set. Likewise, the
feasible regions $R_1, R_0$ represent the regions above the surfaces presented in
Fig. \ref{hiddenconvexity} labeled $u=1$ and $u=0$, respectively. Though the
set $R_{-1}$ is nonconvex and appears to make the program challenging to solve, a better understanding emerges when we consider the intersection of $R_u$ over $u$, as given by 
 $$R \triangleq \bigcap_{-1 \leq u \leq 1} R_u.$$ It can be seen that
$R_1 = R$. The figure on the left in Fig.
\ref{hiddenconvexity}  hints as to why this holds. The three surfaces
intersect at a single point, namely $(0,0)$ and the surface with greater
index $u$ stays above that with the lower index. This implies that $R_1 \subseteq
R_0 \subseteq R_{-1}$. Actually, $R_u$ is monotone in $u$ in that if $u_1 \leq
u_2$, then $R_{u_2} \subseteq R_{u_1}$. When considering such constraints in higher dimensions, similar behavior emerges. {Finally, there have been prior observations regarding the presence of hidden convexity in nonconvex programs (cf.~\cite{bental96hidden})}.}
\begin{figure}[thbp!]
\vspace{-0.1in}
\begin{center}
\includegraphics[width=7in]{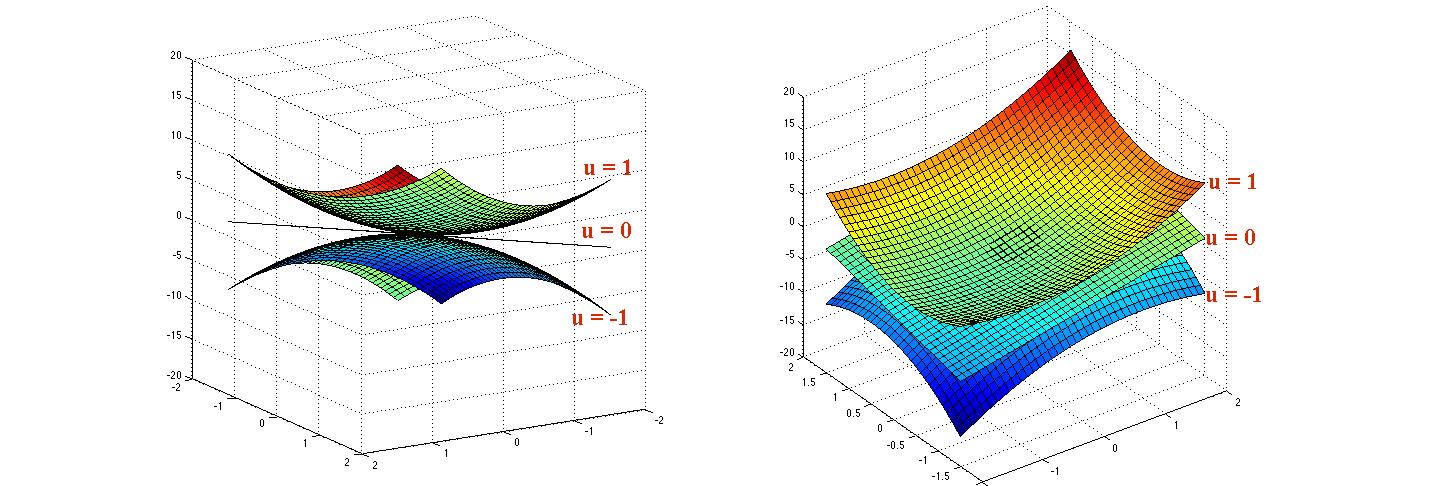}
\end{center}
\caption{Hidden convexity in two and three dimensions}
\label{hiddenconvexity}
\end{figure}

\subsection{A general tractability result}
In the prior subsections, we have provided a tractability statement when $M(u)$ and $q(u)$ are
		defined as per
		$$ M(u) = M_0 + \sum_{l=1}^L u_l M_l \mbox{ and } q(u) = q_0 +
		\sum_{l=1}^L u_l q_l. $$ A natural question is whether a more general tractability statement is available. We address precisely such a question by assuming that $(M,q) \in \wscr$, where $\wscr$ represents a more general uncertainty set. Note that in the settings considered in the prior subsection, $\wscr$ takes the form given by the following:	$$ \Wscr \triangleq \left\{ (M(u),q(u)): M(u) = M_0 + \sum_{l=1}^L u_l M_l \mbox{ and } q(u) = q_0 +
		\sum_{l=1}^L u_l q_l, u \in \Uscr\right\}. $$
We address the tractability question by considering the related separation problem. This requires a crucial result that relates the tractability of the separation and the optimization problems, both of which are defined next over a compact convex set.
{ \bi
\item[({\bf OPT})] {\bf Optimization problem}: Given a vector $c \in \mathbb{Q}^n$ and
a number $\epsilon > 0$, find a vector $y \in \mathbb{Q}^n$ such that
$y$ is an $\epsilon$-feasible and $\epsilon$-optimal solution of the
	problem:
	$$ \max_{x \in \Zcr} \ c^Tx. $$
 If $d(y, \Zcr)$ denotes the Euclidean distance of $y$ from $\Zcr$,
	{then} $y$ satisfies the following: 
$$d(y,\mathcal{Z}) \leq \epsilon \quad \mbox{($\epsilon$-feasibility)} \
\mbox{ and } \  c^T x \leq c^Ty +
\epsilon, \mbox{ for all } x \in \Zcr \quad \mbox{($\epsilon$-optimality)}$$
\item[({\bf SEP})] {\bf Separation problem}: Given a vector $y \in \Qb$ and a
parameter $\epsilon > 0$,  one of the following {may be concluded}:
\be
\item [(i)] assert that $d(y, \Zcr) \leq \epsilon$; 
\item [(ii)] find a vector $c \in \Qb ^n$ such that $\|c\| \geq 1$ and for every $x\in \Zcr$, $c^Tx \leq c^Ty + \epsilon$.
\ee
\ei}
Then Grotschel et al.~\cite{grotschel1981ellipsoid} showed that these
two problems are equivalent through the following result.
\begin{lemma}[Grotschel et
al.~\cite{grotschel1981ellipsoid}]\label{grotschel-lemma}
{Suppose $\Zcr$ is bounded with a nonempty relative interior.} Then
there is a polynomial algorithm\footnote{The algorithms are polynomial with input length $n +
\log(\frac{1}{\epsilon})$.} for solving the separation problem for
$\Zcr$ if and only if there is a polynomial algorithm for solving the optimization problem for $\Zcr$.
\end{lemma}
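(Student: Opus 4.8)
The plan is to establish the two implications separately, in both cases via the ellipsoid method, using polarity to bridge the harder direction. Throughout, one uses that $\Zcr$ is bounded with nonempty relative interior to fix (or compute) an affine hull for $\Zcr$, an explicit ball $B(a_0,R)$ containing $\Zcr$, and a radius $r>0$ such that $\Zcr$ contains some ball of radius $r$ inside that affine hull; these quantities enter the running-time bounds.

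\textbf{From separation to optimization.} Given an oracle solving (SEP) for $\Zcr$, to solve $\max_{x\in\Zcr} c^Tx$ approximately I would binary-search on the optimal value $\gamma$: for a trial $\gamma$, run the ellipsoid method on the convex body $\Zcr \cap \{x: c^Tx \ge \gamma\}$, at each iteration calling the separation oracle for $\Zcr$ and, when that reports $\epsilon$-membership, testing the single halfspace $c^Tx \ge \gamma$ directly. Since the successive ellipsoids have volumes decreasing by the fixed factor $e^{-1/(2(n+1))}$ while any full-dimensional feasible region has volume at least that of a ball of radius comparable to $r$, the procedure halts in a number of steps polynomial in $n$, $\log(R/r)$ and $\log(1/\epsilon)$; retaining the ellipsoid center whenever it lies (approximately) in $\Zcr$ yields an $\epsilon$-feasible, $\epsilon$-optimal point, which is (OPT).

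\textbf{From optimization to separation.} The key observation is that $y\notin\Zcr$ exactly when the support function $h_\Zcr(c) := \max_{x\in\Zcr} c^Tx$ satisfies $h_\Zcr(c) < c^Ty$ for some direction $c$. Hence, after translating so that $\Zcr$ contains the origin in its relative interior, consider
$$ \delta^\star \ := \ \max_{\|c\|\le 1} \bigl( c^Ty - h_\Zcr(c) \bigr). $$
The map $c \mapsto c^Ty - h_\Zcr(c)$ is concave, a value oracle for it is precisely the optimization oracle for $\Zcr$, and so it can be maximized to accuracy $\epsilon$ by the ellipsoid method again — equivalently, this is running the separation-to-optimization reduction on the polar body $\Zcr^\circ$, whose separation oracle is exactly the optimization oracle for $\Zcr$. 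If $\delta^\star \le \epsilon$ we certify $d(y,\Zcr)\le\epsilon$; otherwise, after rescaling, the near-optimal $c$ produced satisfies $\|c\|\ge 1$ and $c^Tx \le c^Ty + \epsilon$ for all $x\in\Zcr$, which is the required separating vector, establishing (SEP).

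\textbf{Main obstacle.} The delicate part is not the single ellipsoid step but the propagation of precision and of the inner/outer radii through the reduction, especially through the polarity step: a small $\Zcr$ yields a large $\Zcr^\circ$ and vice versa, so one must begin from an explicitly well-conditioned description (the bounds $r$, $R$) to keep the polar body bounded and its ellipsoid runs polynomial. Relatedly, the boundedness-plus-nonempty-relative-interior hypothesis is precisely what supplies the volume lower bound forcing termination; when $\Zcr$ is not full-dimensional one must first restrict to its affine hull and carry that hull through both oracles. Getting the bookkeeping right so that an "$\epsilon$-feasible/$\epsilon$-optimal" guarantee on one side translates into the analogous guarantee on the other is where essentially all the work lies.
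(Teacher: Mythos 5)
The paper does not prove this lemma: it is imported verbatim from Gr\"otschel--Lov\'asz--Schrijver \cite{grotschel1981ellipsoid} and used as a black box. Your sketch correctly reproduces the standard argument from that reference --- ellipsoid method with a volume lower bound (from the inner radius $r$) for the separation-to-optimization direction, and polarity/support functions for the converse, with the optimization oracle for $\Zcr$ serving as a separation oracle for the polar body --- and you correctly identify where the real work lies (precision propagation, restricting to the affine hull, conditioning of the polar). So this is the same approach as the cited source, presented at sketch level; nothing in it conflicts with how the lemma is used in the paper.
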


In the context of
obtaining a robust solution to an LCP, the optimization problem of interest is
given by the following: 
\begin{equation}\label{uLCPLobj}
\begin{array}{rrl}
\min \qquad \qquad d^Tz & \\
\st & z \in {\cal Z} \triangleq \left\{ z = (x;t) : \begin{aligned} x^T(Mx + q) & \leq t,\qquad  \forall (M,q) \in \wscr\\
 Mx + q & \geq 0, \qquad \forall (M,q) \in \wscr\\
 x & \geq 0
	\end{aligned} \right\},
\end{array}
\end{equation}
where $d \triangleq (0;1)$.
\begin{theorem}[{\bf Tractability of RC to uncertain LCPs with general uncertainty sets}] 
Suppose {there exists} a polynomial algorithm to solve the separation
problem over the compact convex uncertainty set $\mathcal{W}$ and the following assumptions hold:
\bi
\item[(a)] $M \succeq 0, \forall (M,q) \in \wscr$.
\item[(b)] If  $M_{i,\bullet}$ denotes the $i$th row of $M$, then
for $i = 1, \hdots, n$, the following holds:
	$$\|
M_{i,\bullet}\|_2 \geq \lambda \in \Qb_{++}, \qquad \forall (M,q) \in \wscr.$$
\item[(c)] Both $\wscr$ and $\mathcal{Z}$ are bounded sets, each having a {nonempty  relative
	interior.}
\ei
Then the problem 
\eqref{uLCPLobj} may be solved by a polynomial algorithm. \end{theorem}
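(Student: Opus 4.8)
The plan is to reduce everything to Lemma~\ref{grotschel-lemma}. Since $\Zcr$ is bounded with nonempty relative interior by assumption~(c), a polynomial-time algorithm for the linear optimization problem \eqref{uLCPLobj} will follow at once from a polynomial-time \emph{separation} oracle for $\Zcr$. So the entire task is to build such an oracle for $\Zcr$ out of the hypotheses. The first ingredient is that the assumed polynomial separation oracle for $\wscr$, combined with Lemma~\ref{grotschel-lemma} applied to $\wscr$ (which is also bounded with nonempty relative interior, by (c)), gives a polynomial-time \emph{optimization} oracle over $\wscr$, i.e. one that approximately maximizes or minimizes any given linear functional over $\wscr$.

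Next I would record that $\Zcr$ is convex: under assumption~(a) the map $x \mapsto x^T(Mx+q)$ is convex for each $(M,q)\in\wscr$, so each set $\{(x,t): x^T(Mx+q)\le t\}$ is convex, and $\Zcr$ is the intersection of these with the halfspaces $\{x_i\ge 0\}$. Given a query point $z=(x;t)$, the separation oracle for $\Zcr$ processes the three constraint families in turn. (1) If some $x_i<-\epsilon$, return the negative $i$th unit coordinate vector (normalized). (2) For the quadratic constraints, observe that $x^T(Mx+q)=\langle xx^T,M\rangle+\langle x,q\rangle$ is \emph{linear} in $(M,q)$; hence $\max_{(M,q)\in\wscr}x^T(Mx+q)$ and a near-maximizer $(\Mhat,\qhat)$ can be obtained to accuracy $\epsilon$ from the optimization oracle over $\wscr$. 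If this value exceeds $t+\epsilon$, then by convexity of $g(x'):=x'^T(\Mhat x'+\qhat)$ the vector $c=(2\Mhat x+\qhat;\,-1)$ supports $\Zcr$ and strictly separates $z$, since every $(x';t')\in\Zcr$ satisfies $t'\ge g(x')\ge g(x)+(2\Mhat x+\qhat)^T(x'-x)$. (3) For the linear constraints, for each $i$ compute $\min_{(M,q)\in\wscr}(M_{i\bullet}x+q_i)$ and a near-minimizer $(M^{(i)},q^{(i)})$, again from the optimization oracle; if this is below $-\epsilon$ for some $i$, return $c=(-(M^{(i)}_{i\bullet})^T;\,0)$, which separates $z$ from $\Zcr\subseteq\{x': M^{(i)}_{i\bullet}x'+q^{(i)}_i\ge 0\}$. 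If no test triggers, declare $z$ to be $\epsilon$-close to $\Zcr$. This oracle makes $1+n$ calls to the optimization oracle over $\wscr$ plus $O(n)$ arithmetic, hence runs in polynomial time; feeding it into Lemma~\ref{grotschel-lemma} for $\Zcr$ yields the claim.

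I expect the genuine work to lie in two places. First, the specification (SEP) requires the returned certificate $c$ to have $\|c\|\ge 1$ and polynomially bounded bit-length; for the linear-constraint case the natural certificate is $-(M^{(i)}_{i\bullet})^T$, and it is precisely assumption~(b), $\|M_{i\bullet}\|_2\ge\lambda\in\Qb_{++}$, that certifies this vector is nonzero with a quantitative rational lower bound, so that rescaling it to unit norm keeps the encoding length controlled — without (b) the separating hyperplane could degenerate and the reduction would break. Second, the optimization oracle over $\wscr$ only returns $\epsilon$-feasible, $\epsilon$-optimal points, so one must propagate these errors through the subgradient inequality in step~(2) and through step~(3), using the boundedness of $\wscr$, $\Zcr$ and of the relevant range of $x$ (assumption~(c)) to choose the internal tolerance as a polynomially small function of the target accuracy, so that the composed oracle still meets the (SEP) guarantee. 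Both are routine once one works with the quantitative (approximate) form of Lemma~\ref{grotschel-lemma}, but they are the substance of a careful proof.
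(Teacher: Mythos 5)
Your proposal is correct and follows essentially the same route as the paper's proof: reduce to a polynomial separation oracle for $\Zcr$ via the Gr\"otschel--Lov\'asz--Schrijver equivalence, exploit the linearity of $x^T(Mx+q)$ and of $M_{i\bullet}x+q_i$ in $(M,q)$ to turn the assumed oracle over $\wscr$ into the needed inner maximization/minimization, separate via the gradient hyperplane $((M+M^T)x+q;-1)$ in the quadratic case and via the (suitably normalized) row $M_{i\bullet}$ in the linear case, with assumption (b) guaranteeing $\|c\|\geq 1$ and assumption (c) controlling the error propagation. The only nitpick is that your quadratic-case certificate should read $(\Mhat+\Mhat^T)x+\qhat$ rather than $2\Mhat x+\qhat$ unless $\Mhat$ is symmetrized, which is exactly what the paper writes.
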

\begin{proof}
It suffices to prove is that separation problem over the set $\mathcal{Z}$ can be solved in polynomial time since the cost vector $d$ in \eqref{uLCPLobj} has rational entries; specifically, $d = (0, \hdots, 0;1)^T$.

First notice that the assumption (a) {implies} that $\Zcr$ is convex
since it is defined as the intersection of an infinite collection of convex
sets. Therefore, the separation problem over $\Zcr$ is defined in a general form by ({\bf
		SEP}). {We proceed to show that this separation problem may be solved in polynomial time. 
  It suffices to show that either (i) or (ii) in ({\bf SEP}) can be shown to hold in
polynomial time for a given point $z = (x;t) \in \Qb^{n+1}$ and a positive
parameter $\epsilon$.} 

{Consider the
feasibility of $z$ with respect to $\cal Z$. Given a vector $z = (x;t)$ where $x$ is nonnegative\footnote{Note that if $x$ has any negative component, a separating hyperplane can be constructed with relative ease and we ignore this possibility without any loss in generality.}, it
suffices to examine whether $z$ is feasible with respect to the following:
\begin{align}
	x^T(Mx + q) \leq t, \quad \forall (M,q) \in \wscr\quad  & \equiv \max_{ (M,q) \in \wscr} x^T(Mx + q) \leq t \\
\mbox{For $i = 1, \hdots, n, \quad $}   	(M_{i,\bullet} x + q_i) \geq 0, \quad \forall (M,q) \in \wscr \quad & \equiv \min_{(M,q) \in \wscr} (M_{i,\bullet} x + q_{i}) \geq 0,  
\end{align}
 In effect, feasibility can be ascertained if the following problems may be solved in polynomial time:
$${\bf (1).} \ \left\{ \max_{(M,q) \in \wscr} \ x^TMx + q^Tx
\right\}
\mbox{ and } {\bf (2i).} \ 
\left\{ \min_{(M,q) \in \wscr} {M}_{i,\bullet}x + q_i \right\}, \quad i = 1, \hdots, n.$$
} 

	{Given an $(x;t) \in \Qb^{n+1}$, problem (1) is linear in $(\mbox{vec}(M);q)$ and can be recast as a problem of maximizing a linear objective over a convex set. Furthermore, an $\epsilon_1$-optimal solution of (1) will be denoted by $(M_0,q_0)$ where $\epsilon_1$ will be subsequently defined. Similarly, problem (2i) is a convex program with an objective that is linear in $(M_{i,\bullet}^T;q_i)$ and thereby linear in $(M;q)$. An $\epsilon_2$-optimal solution of (2i) will be denoted by $(M^i,q^i)$, where $\epsilon_2$ will be subsequently defined and the $i$th row of $M^i$ and $q^i$ will be collectively denoted by $(a_i,b_i).$  Since     the coefficients of both cost functions are rational by assumption, {\bf (1)}
	and {\bf (2i)} 
	for $i = 1,\hdots, n$ can be solved
		in polynomial time upto any precision $\epsilon_1 > 0$ and
		$\epsilon_2 > 0$, respectively and provide rational solutions.}

By the boundedness of $\Zcr$, we may assume that $\|y\| \leq \mcr$ for all $y \in \Zcr.$ 
 {Prior to proceeding, we quantify the precision levels $\epsilon_1$ and $\epsilon_2$}. Given an $\epsilon$, suppose $\epsilon_1$ and $\epsilon_2$ are defined as follows:
$$\epsilon_1 \triangleq \min \{\epsilon/(3\|x\|^2 + 3\|x\|+1), \epsilon/(3(\mcr+\|x\|)(2\|x\| + 1))\} \text{ and } \epsilon_2 \triangleq \min \{\lambda \epsilon/(6\mcr), \lambda \epsilon/6, \lambda/2 \}.$$ 

Suppose $\wscr_i \triangleq \{(M_{i,\bullet},q_i): (M,q) \in \wscr\}.$
Since $d((M_0,q_0), \wscr) \leq \epsilon_1$, $d((a_i,b_i), \wscr_i) \leq \epsilon_2$, and $\wscr$ is compact, we may find $(M_1,q_1) \in \wscr$ and $(\bar a_i, \bar b_i) \in \wscr_i$ such that $d((M_0,q_0),(M_1,q_1))\leq \epsilon_1$ and $d((a_i,b_i),(\bar a_i, \bar b_i)) \leq \epsilon_2$. Hence,
\begin{align}
|x^T(M_0x + q_0) - x^T(M_1x + q_1)| & = |x^T((M_0-M_1)x + (q_0-q_1))|\cr
		& \leq \|x \|\|(M_0-M_1)x + (q_0-q_1)\| \cr
& \leq \|x \|(\|M_0-M_1\|\|x\| + \|q_0-q_1\|) \cr
& \leq \|x\|(\|x\|\epsilon_1+\epsilon_1)\cr
& \leq (\|x\|^2 + \|x\|)\epsilon/(3\|x\|^2 + 3\|x\|+1) < \epsilon/3. \label{eq1}
\end{align}
Furthermore, we have the following:
\begin{align}
\forall y \in \Zcr,  \text{we have }
& \quad | ((M_0+M_0^T)x + q_0)^T(y-x) - ((M_1+M_1^T)x + q_1)^T(y-x) | \cr
& \leq \|y-x\| \|(M_0-M_1+M_0^T-M_1^T)x + q_0-q_1\| \cr
& \leq \|y-x\|(2\|M_0-M_1\|\|x\|+\|q_0-q_1\|) \cr
& \leq \|y-x\|(2\|x\|+1)\epsilon_1\cr
&  \leq  (\mcr + \|x\|)(2\|x\|+1)\epsilon_1 \leq \epsilon/3. \label{eq23}
\end{align}
Similarly, we may bound the difference between $(a_i,b_i)$ and $(\bar a_i,\bar b_i)$ for all $i$:
\begin{align}
\mbox{ for $i = 1, \hdots, n$},  \text{we have }  
|b_i/\lambda - \bar b_i/\lambda | & \leq \epsilon_2/\lambda \leq \epsilon/6 \label{eq21} \\
\mbox{ and for $i = 1, \hdots, n$ and for all $y \in \Zcr$},  \text{we have }  
 |(a_i/\lambda-\bar a_i/\lambda)^Ty| & \leq \|a_i-\bar a_i\|\|y\|/\lambda \cr
			& \leq \|a_i-\bar a_i\|\mcr/\lambda \cr
			& \leq \mcr\epsilon_2/\lambda \cr
& \leq \epsilon/6. \label{eq22}
\end{align}
In addition, {since problem (2i) is solved to a precision of $\epsilon_2$, we have that the following holds: 
\begin{align*}
 \mid \| a_i \| -  \| \bar a_i \| \mid & \leq \epsilon_2 
   \leq \lambda/2. 
\end{align*}
Then we may conclude the following:
\begin{align}
 \Rightarrow & (\| a_i \| -  \| \bar a_i \|)/\lambda  \geq -1/2 \cr
\Rightarrow & \| a_i \|/\lambda \geq \| \bar a_i \| /\lambda -1/2 \geq 1-1/2 = 1/2,  \label{eq24}
\end{align}
where the last inequality is a consequence of the assumption $\|\bar a_i \| \geq \lambda.$
}

The final part of the proof is reliant on considering three possibilties that emerge based on obtaining the solutions to problems (1) and (2i) for $i = 1, \hdots, n$.
\bi
\item[(1)] Suppose $x^TM_0x + q_0^Tx \leq t-\epsilon_1 $ and $ a_ix + b_i \geq \epsilon_2$ for $i = 1,\hdots, n$.  Since $(M_0,q_0)$ is an $\epsilon_1$-optimal solution, we have that 
$$\max_{(M,q) \in \wscr} (x^TMx + q^Tx) \leq x^TM_0x + q_0^Tx + \epsilon_1 \leq t- \epsilon_1 + \epsilon_1 \leq t,$$ where the second inequality follows by our assumption on $(M_0,q_0)$. Similarly, we have that $(M_{i,\bullet},q_i)$ is an $\epsilon_2$-optimal solution implying that  $$\min_{(M,q) \in \wscr} ( M_{i,\bullet} x + q_i ) \geq a_ix + b_i - \epsilon_2 \geq \epsilon_2 - \epsilon_2 =0,$$
where the second inequality again follows by assumption. It can be concluded  that $(x;t) \in \Zcr$ and (i) of ({\bf SEP}) has been concluded in polynomial time.
\item[(2)] Suppose $x^TM_0x + q_0^Tx > t-\epsilon_1 $ and $ a_ix + b_i \geq \epsilon_2$ for $i = 1,\hdots, n$.
Then we may construct a vector $c$ defined as  $c \triangleq [(M_0 + M_0^T)x + q_0;-1]$. Consequently, $\|c\| \geq 1$. Furthermore, since $M_0$ and $q_0$ have rational entries by assumption, it follows that $c \in \Qb^{n+1}$. Furthermore, 
$\forall (y;\tau)$ feasible, we have the following sequence of inequalities:
\begin{align*} \tau & \geq y^T(M_1y + q_1) 
		    \geq x^T(M_1x + q_1) + [(M_1 + M_1^T)x + q_1]^T(y-x),
\end{align*}
where the second inequality follows from the convexity of $y^T(M_1 y + q_1)$. It follows that \begin{align*} 
& \quad x^T(M_1x + q_1) + [(M_1 + M_1^T)x + q_1]^T(y-x) \\
	& \geq x^T(M_0x + q_0) - \epsilon/3 + [(M_1 + M_1^T)x + q_1]^T(y-x) \\
			&  > t -\epsilon_1 - \epsilon/3 + [(M_0 + M_0^T)x + q_0]^T(y-x)- \epsilon/3\\
		&  = t -\epsilon+ [(M_0 + M_0^T)x + q_0]^T(y-x).
\end{align*} 
where the first inequality follows from \eqref{eq1}, the second inequality follows by assumption on $x^TM_0x + q_0^Tx$ and by invoking \eqref{eq23}. Therefore we have that  
\begin{align*}
		c^Tz & = [(M_0 + M_0^T)x + q_0;-1]^T(x;t) + \epsilon\\
	&  \geq [(M_0 + M_0^T)x + q_0;-1]^T(y; \tau), \quad \forall (y;\tau) \in \Zcr.
\end{align*}
Consequently, we have concluded (ii) of ({\bf SEP}) in polynomial time. 
\item[(3)] Suppose there exists an $i \in \{1, \hdots, n\}$ such that $ a_i^Tx + b_i < \epsilon_2$. Then let $c$ be defined as 
$c \triangleq -(2a_i/\lambda; 0)$. Then from \eqref{eq24}, $\| c \| = \|2a_i\| /\lambda \geq 1$ and $c\in \Qb^{n+1}$ since $a_i \in \Qb^n, \lambda \in \Qb$. Thus
 \begin{align*}
		\forall (y;\tau) \in \Zcr, c^T(y;\tau) & = -2a_i^Ty/\lambda \\
			& \leq -2\bar a_i^Ty/\lambda + 2\epsilon/6 \\
	&  \leq \bar 2b_i/\lambda + \epsilon/3\\
	&  \leq 2b_i/\lambda + 2\epsilon/6 + \epsilon/3 \\
	& \leq -2a_i^Tx/\lambda + 2\epsilon_2/\lambda + 2\epsilon/3\\
	&  \leq -2a_i^Tx/\lambda + \epsilon = c^T(x;t) + \epsilon,
\end{align*}
where the first inequality follows from \eqref{eq22}, the second inequality follows from the feasibility of $(\bar a_i,\bar b_i)$, the third inequality is a consequence of \eqref{eq21}, the fourth inequality is a consequence of the assumption that $a_i^Tx + b_i < \epsilon_2$, and the last inequality is a result of invoking the definition of $\epsilon_2$. Therefore, we have concluded (ii) of ({\bf SEP}) in polynomial time.  
\ei 
Based on (1), (2), and (3), we note that the separation problem ({\bf SEP}) can be solved in polynomial time. Consequently, by Lemma ~\ref{grotschel-lemma}, there is a polynomial time algorithm for solving the optimization problem over $\Zcr$.
\end{proof}

{\bf Remark:} Through the above proposition, we establish a connection between the
tractability of the uncertainty set and the tractability of the robust
counterpart of uncertain LCP, further generalizing our findings from the prior subsections.

\section{General uncertain non-monotone LCPs}\label{sec:IV}
{In this section, we consider non-monotone uncertain LCPs in more
	general settings where tractable robust counterparts are unavailable.
		Instead, we examine when such problems result in finite dimensional nonconvex
		programs.  {We assume that $M(u)$ takes a form given by $M(u) = M_0 +
\sum_{l=1}^L u_l M_l$ and $q(u) = q_0 + \sum_{l=1}^L u_l q_l$ where $u \in
\mathcal{U}$, $(M_0,q_0)$ denote the nominal values on $M$ and $q$, and
$(M_l,q_l)_{l=1}^L$ are the basic shifts, while $\mathcal{U}$ represents a
given perturbation set. This model of utilizing nominal values and shifts has been relatively standard in the field of robust optimization (cf.~
\cite{bental09robust} and the references therein). } {In
	Section~\ref{sec:40}, we see that in the more general setting, the
		RC is already intractable {to resolve} when the perturbation
		set is {of} dimension one, {demonstrating the hardness of
			the problem}}. In Section~\ref{sec:41}, we discuss two avenues via which we may obtain nonconvex robust counterparts. While
		stationary points of  such
		problems can be obtained by nonlinear
		programming solvers, global solutions require branching-based
		schemes. In Section~\ref{sec:42}, inspired by recent research
		{by}
		Fampa et al.~\cite{fampa2013global}, we present a technique for
		obtaining global solutions of a nonconvex quadratically
		constrained quadratic program.}

\subsection{NP-hardness of the problem}\label{sec:40}
{Throughout this section, we define $M(u)$ as follows:
\begin{align}\label{defM4}
\begin{aligned}
M(u) \triangleq M_0 + \sum_{l=1}^L u_l M_l, q(u) \triangleq q_0 + \sum_{l=1}^L u_l q_l, u \in \Uscr.
\end{aligned}
\end{align}
without any assumption on $M_0$ or $M_l, l = 1,\hdots, L$. Next, we prove that even the RC of a simple LCP$(M(u),q(u))$, where $M(u) = M_0 + uM_1, q(u) = q_0 + uq_1, u \in \Uscr = [0,1]$ is NP-hard. 
	\begin{lemma}
Consider an uncertain linear complementarity problem LCP$(M(u),q(u))$
where  $M(u) = M_0 + uM_1, q(u) = q_0 + uq_1, u \in \Uscr = [0,1]$ and
$M_0$ and $M_1$ lie in $\Real^{n \times n}$. Then
the robust counterpart of this problem is NP-hard where the robust
counterpart is defined as follows:
\begin{align} \tag{RC$_{\rm LCP}$}
\begin{array}{rrl}
\min & {\displaystyle \max_{u \in \Uscr}} \ x^T(M(u)x + q(u)&\\
\st & M(u)x + q(u) & \geq 0, \qquad \forall u \in \Uscr \\
& x & \geq 0.
\end{array}
\end{align}
\end{lemma}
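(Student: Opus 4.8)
The plan is to establish the claim by a polynomial-time reduction from a problem that is already known to be NP-hard; a convenient choice is the decision version of linear complementarity feasibility --- given $(N,p)$ with $N\in\Real^{n\times n}$, $p\in\Real^n$, decide whether $\mathrm{LCP}(N,p)$ admits a solution --- which is NP-complete (Chung 1989). The essential preliminary observation is that, since $\Uscr=[0,1]$ is one-dimensional and both $M(u)$ and $q(u)$ are \emph{affine} in $u$, the semi-infinite data in $(\mathrm{RC}_{\mathrm{LCP}})$ collapse to the two extreme scenarios $u\in\{0,1\}$. Concretely, for fixed $x$ the map $u\mapsto M(u)x+q(u)$ is affine, hence
$$ M(u)x+q(u)\ge 0\ \ \forall u\in[0,1] \iff M_0x+q_0\ge 0\ \text{ and }\ (M_0+M_1)x+(q_0+q_1)\ge 0, $$
and, $\max_{u\in[0,1]}x^T(M(u)x+q(u))$ being the maximum of an affine function of $u$,
$$ \max_{u\in[0,1]} x^T\bigl(M(u)x+q(u)\bigr)= x^T(M_0x+q_0)+\max\bigl\{0,\ x^T(M_1x+q_1)\bigr\}. $$

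Given the preceding reduction of the data, I would complete the argument as follows. Starting from an arbitrary $\mathrm{LCP}(N,p)$, set $M_0:=N$, $q_0:=p$ and $M_1:=0$, $q_1:=0$ (if one prefers an instance in which the uncertain matrix is genuinely nonzero, take instead $M_0=q_0=0$, $M_1=N$, $q_1=p$; the reasoning below applies verbatim). Then $M(u)\equiv N$ and $q(u)\equiv p$, and by the display above $(\mathrm{RC}_{\mathrm{LCP}})$ reduces to
$$ \min\bigl\{\, x^T(Nx+p)\ :\ Nx+p\ge 0,\ x\ge 0 \,\bigr\}. $$
Over the feasible set this objective is the inner product of two componentwise-nonnegative vectors, hence $\ge 0$, and it equals $0$ at a feasible $x$ exactly when $x_i(Nx+p)_i=0$ for all $i$, i.e.\ exactly when $x$ solves $\mathrm{LCP}(N,p)$. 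Therefore the optimal value of $(\mathrm{RC}_{\mathrm{LCP}})$ is $0$ if and only if $\mathrm{LCP}(N,p)$ is solvable (with the convention that an infeasible $(\mathrm{RC}_{\mathrm{LCP}})$ has value $+\infty$; testing nonemptiness of $\{x\ge0:Nx+p\ge0\}$ is a linear program and does not affect hardness). Since deciding solvability of a general LCP is NP-complete, deciding whether the optimal value of $(\mathrm{RC}_{\mathrm{LCP}})$ is $\le 0$ is NP-hard, and hence so is solving $(\mathrm{RC}_{\mathrm{LCP}})$.

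I do not anticipate a genuine obstacle; the points needing care are (i) the reduction of the semi-infinite constraint and of the inner maximum to the scenarios $u\in\{0,1\}$, and (ii) verifying that the term $\max\{0,x^T(M_1x+q_1)\}$ never changes the value on the feasible polyhedron (in the chosen data it is simply absent, since $M_1=0$, $q_1=0$). If instead one wants the hardness to persist in a regime where the uncertainty actually flips the definiteness of $M(u)$, the delicate step becomes controlling the sign of $x^T(M_1x+q_1)$ over the feasible region; this is not needed for the present claim, but it can be arranged by reducing instead from the NP-hardness of minimizing an indefinite quadratic over a polytope, encoding a box constraint through the $u=0$ and $u=1$ feasibility cuts and placing the indefinite form in $M_0$.
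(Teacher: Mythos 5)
Your proof is correct and follows essentially the same route as the paper: both arguments reduce from the NP-completeness of deciding solvability of a general LCP (Chung, 1989) by choosing the data so that the robust counterpart collapses to the standard quadratic-programming formulation $\min\{x^T(Nx+p): Nx+p\ge 0,\ x\ge 0\}$, whose optimal value is zero iff the LCP is solvable. The only cosmetic difference is that the paper places the LCP data in $(M_1,q_1)$ with $M_0=0,\,q_0=0$ (your stated alternative, which then uses the sign of $x^T(M_1x+q_1)$ on the feasible set exactly as you note), whereas your primary choice puts it in $(M_0,q_0)$; both are valid instances of the lemma's problem class.
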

\begin{proof}
First we write the RC, given by (RC$_{LCP}$), as follows:
$$
\begin{array}{rrl}
\min & {\displaystyle \max_{u \in \Uscr}} \ x^T(M(u)x + q(u)&\\
\st & M(u)x + q(u) & \geq 0, \qquad \forall u \in \Uscr \qquad (\mbox{RC}_{\mathrm{LCP}})\\
& x & \geq 0.
\end{array}
$$
Let $M_0 = 0$ and $q_0 = 0$, then 
$$\max_{u \in \Uscr} \{x^T(M(u)x + q(u))\} = \max_{u \in [0,1]} \{ x^T(uM_1x + uq_1) \} =  \max \{0, x^T(M_1x + q_1)\}.$$ Furthermore, we have that $$M(u)x + q(u) \geq 0, \forall u \in [0,1] \quad \Leftrightarrow \quad M_1x + q_1 \geq 0.$$ As a consequence, $\max \{0, x^T(M_1x + q_1)\} = x^T(M_1x + q_1)$ for any nonnegative vector $x$ and the (RC$_{\mathrm{LCP}}$) is equivalent to the following:
$$
\begin{array}{rrl}
\min & x^T(M_1x + q)&\\
\st & \quad M_1x + q_1 & \geq 0,\\
& x & \geq 0,
\end{array}
$$ which is essentially the problem LCP$(M_1,q_1)$. But LCP$(M_1,q_1)$
is NP-complete since the equality-constrained $0$-$1$ knapsack problem can be reduced to 
LCP$(M,q)$ in polynomial time~\cite{chung1989np}. Consequently,
obtaining a robust solution to an uncertain LCP with general matrices
is NP-complete. \end{proof} 

\textbf{Remark:} If we assume that the feasible region $\{x \mid M_1x + q_1 \geq 0, x \geq 0\}$ is bounded, the LCP$(M_1,q_1)$ is still NP-complete. The reason is that equality-constrained 0-1 knapsack problem can be reduced to an LCP$(M,q)$ such that $\{x \mid Mx + q \geq 0, x \geq 0\}$ is bounded. Please refer to \cite{chung1989np} for details.
}

\subsection{Nonconvex robust counterparts}\label{sec:41}
In Section~\ref{sec:32}, we showed that the RC of uncertain nonmonotone
LCP may be tractable under some assumptions. However, in general, this
is not the case, {particularly when $M(u)$ and $q(u)$ are defined on
	the same (or related) uncertainty sets.} In this circumstance, we
	can show that the RC may still be reformulated as a finite dimensional mathematical
	program.

\begin{proposition}[For non-monotone uLCP$(M(u),q(u))$]\label{prop-nmon-nconv} 
Suppose $M(u)$ and $q(u)$ are defined as \eqref{defM4} where $\mathcal{U}$ is
defined as $\Uscr_1, \Uscr_2,$ or $\Uscr_\infty$, ($\Uscr_\infty$ is defined in
\eqref{qU}). Then \eqref{unlcp2} may be written as a finite dimensional mathematical program.
\end{proposition}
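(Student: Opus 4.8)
The plan is to eliminate the two inner optimizations over $u$ that make \eqref{unlcp2} a semi-infinite program, showing that each collapses to finitely many constraints in finitely many variables once the affine structure of $M(u),q(u)$ is used. Concretely, I would first substitute $M(u)=M_0+\sum_{l=1}^L u_l M_l$ and $q(u)=q_0+\sum_{l=1}^L u_l q_l$ (as in \eqref{defM4}) into the two expressions appearing in \eqref{unlcp2}, obtaining
$$ x^T(M(u)x+q(u)) = g_0(x) + \sum_{l=1}^L u_l\, g_l(x), \qquad g_l(x)\triangleq x^TM_lx + q_l^Tx, $$
and, for each row $i$,
$$ M_{i\bullet}(u)x + q_i(u) = h_{i0}(x) + \sum_{l=1}^L u_l\, h_{il}(x), \qquad h_{il}(x)\triangleq [M_l]_{i\bullet}x + [q_l]_i, $$
where the $g_l$ are quadratic and the $h_{il}$ affine in $x$. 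Writing $g(x)\triangleq(g_1(x),\dots,g_L(x))^T$ and $h_i(x)\triangleq(h_{i1}(x),\dots,h_{iL}(x))^T$, the first constraint of \eqref{unlcp2} becomes $g_0(x)+\max_{u\in\Uscr}u^Tg(x)\le t$ and the remaining ones become $h_{i0}(x)+\min_{u\in\Uscr}u^Th_i(x)\ge 0$ for $i=1,\dots,n$.

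Next I would evaluate these inner problems with the elementary conjugacy identities $\max_{\|u\|_\infty\le 1}u^Tw=\|w\|_1$, $\max_{\|u\|_1\le 1}u^Tw=\|w\|_\infty$, $\max_{\|u\|_2\le 1}u^Tw=\|w\|_2$, and the symmetry of each of $\Uscr_1,\Uscr_2,\Uscr_\infty$, which gives $\min_{u\in\Uscr}u^Tw=-\max_{u\in\Uscr}u^Tw$. This turns the constraints of \eqref{unlcp2} into $g_0(x)+\|g(x)\|_\star\le t$ and $h_{i0}(x)-\|h_i(x)\|_\star\ge 0$, where $\|\cdot\|_\star$ denotes $\|\cdot\|_1$, $\|\cdot\|_\infty$, or $\|\cdot\|_2$ according as $\Uscr$ is $\Uscr_\infty$, $\Uscr_1$, or $\Uscr_2$. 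Introducing finitely many auxiliary variables then removes the absolute values, maxima and square roots in the usual fashion: for $\Uscr_\infty$ one adds $z_l$ with $-z_l\le g_l(x)\le z_l$ and $g_0(x)+\sum_l z_l\le t$, and $w_{il}$ with $-w_{il}\le h_{il}(x)\le w_{il}$ and $h_{i0}(x)-\sum_l w_{il}\ge 0$; for $\Uscr_1$ one replaces each constraint by the finite family $g_0(x)\pm g_l(x)\le t$ and $-h_{i0}(x)\le h_{il}(x)\le h_{i0}(x)$; for $\Uscr_2$ one sets $s_l=g_l(x)$ and $r=t-g_0(x)$ and writes $\sum_l s_l^2\le r^2$, $r\ge 0$, together with the conic-quadratic constraints $\|h_i(x)\|_2\le h_{i0}(x)$. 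Retaining $x\ge 0$, the outcome is in each case a program in the finitely many variables $(x,t)$ and the added auxiliaries, all of whose constraints are at most quadratic — equivalently, a quadratically constrained quadratic program, which is in general nonconvex because no semidefiniteness is assumed on the $M_l$; this is exactly the assertion.

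I do not expect a substantial obstacle here; the proof is a routine application of robust-optimization reformulations. The one point that needs a little care is the $\Uscr_2$ case: passing from $\|g(x)\|_2\le t-g_0(x)$ to the squared inequality is valid only together with $t-g_0(x)\ge 0$, which is why I would introduce the explicit nonnegative variable $r=t-g_0(x)$ rather than square the inequality directly. I would also record that the substitutions $s_l=g_l(x)$ are exact quadratic equalities introducing no relaxation, so the resulting finite-dimensional program is equivalent to \eqref{unlcp2} and not merely a bound; and, as a byproduct, that the $n$ feasibility constraints turn out to be convex (linear for $\Uscr_1,\Uscr_\infty$, conic-quadratic for $\Uscr_2$), so the nonconvexity is confined to the objective side, which sets the stage for the global solution schemes developed in the remainder of Section~\ref{sec:IV}.
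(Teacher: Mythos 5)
Your proposal is correct and follows essentially the same route as the paper: substitute the affine parametrization, evaluate the inner maximization/minimization via the dual-norm identities, and lift the resulting norm terms with finitely many auxiliary variables (the paper introduces $w_l = x^TM_lx+q_l^Tx$ explicitly and then bounds it by $\tau_l$, which is exactly your $z_l$ device), arriving at a nonconvex QCQP. Your extra care with the $\Uscr_2$ case and the observation that the nonconvexity sits only in the epigraph-type constraints are consistent with, and slightly more explicit than, the paper's treatment, which works out only the $\Uscr_\infty$ case in detail.
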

\begin{proof}
{We introduce an artificial variable $w$ into a finite set of nonconvex quadratic equality constraints:
\begin{align}
\begin{aligned} 
w_l & = x^T M_l x + q_l^T x, & l = 1,\hdots, L,
\end{aligned} \label{nonmono2}
\end{align}
}
Then the quadratic constraint in \eqref{unlcp2} can be written as
follows:
\begin{align}\label{nonmono3}
x^T M_0 x + q_0^T x + u^T w \leq t,\qquad \forall u \in \mathcal{U}
\end{align}
This semi-infinite constraint can be equivalently written as follows:
\begin{align}\label{nonmono4}
x^T M_0 x + q_0^T x + \max_{u \in \Uscr} u^T w \leq t. 
\end{align}
But, $\displaystyle \max_{u \in \Uscr} u^T w$ is given by $\|w\|_1$,
	$\|w\|_{\infty}$ or $\|w\|_2$ if $\Uscr$ is given by $\Uscr_{\infty},
	\Uscr_{1}$ or $\Uscr_2$, respectively. Consequently,
	\eqref{nonmono3} may always be recast in a tractable fashion.
Similarly, consider the second constraint in \eqref{unlcp2}:
\begin{align}
M_0 x + q_0 + \sum_{l=1}^L u_l (M_l x + q_l)
 \geq 0,\quad  \forall
 u \in \mathcal{U}.\label{sec_cons}
\end{align}
Based on Prop.~\ref{prop_nmon_lcp}, the constraint \eqref{sec_cons} can also be
reformulated in a tractable fashion when $\Uscr = \Uscr_1, \Uscr_2$ or $\Uscr_{\infty}$. 
We demonstrate this in detail {when} $\Uscr = \Uscr_\infty$ and
{omit the proofs when} $\Uscr= \Uscr_1$ and $\Uscr_2$:
Note that when $\Uscr = \Uscr_\infty$, \eqref{nonmono4} can be rewritten as 
\begin{align}\label{nonmono6}
x^T M_0 x + q_0^T x + \|w\|_1 \leq t. 
\end{align}
{Furthermore \eqref{sec_cons} is equivalent to
\begin{align*}
(M_0x)_i + (q_0)_i - \| Z^i \|_1 \geq 0,
(Z^i)_l = (M_lx + q_l)_i, \quad
\forall l = 1, \hdots, L, \forall i = 1, \hdots, n.
\end{align*}}
Through the addition of variables, $\tau_1,\hdots, \tau_L, z_1, \hdots, z_L$, the
	resulting robust counterpart when $\Uscr = \Uscr_\infty$ can then be stated as the following optimization problem: 
$$\begin{array}{rrlr}
{\displaystyle \min_{x \geq 0, t}} & \qquad t &&\\
\mbox{subject to} & \qquad
  -\tau_l \leq   x^T M_l x + q_l^T x & \leq  \tau_l,    & l = 1,\hdots, L \\ 
 & x^T M_0x + q_0^Tx +
 \sum_{i=1}^{L} \tau_i & \leq t,  &\\
& M_0 x + q_0 - \sum_{i=1}^{L} z_i  & \geq 0, & \\
&  z_l \geq M_l x + q_l & \geq -z_l, \qquad   & l = 1, \hdots, L. 
\end{array}
$$
This QCQP is nonconvex when $M_l, l = 1,\hdots,L$ are indefinite.
\end{proof}

We now provide a corollary of this result when $\Uscr:=\Uscr_c$.
\begin{corollary}[RC for non-monotone uLCP$(M(u),q(u))$ where
	$\Uscr = \Uscr_c$]
{Suppose $M(u)$ is defined as \eqref{defM4} and $q(u) = q_0 +
\sum_{i=1}^{L} u_i q_i$ where $u \in \mathcal{U}$ and $\mathcal{U}$ is
given by $\Uscr_c$, defined as \eqref{defUc}.
	Then	\eqref{unlcp} can be represented as a finite-dimensional mathematical program:
$$
\begin{array}{rrlr}
{\displaystyle \min_{x \geq 0}}& x^TM_0x + q_0^Tx+ p^Ty \\
\textrm{\em subject to} & (P^T y)_l + x^T M_lx + q_l^T x & = 0, &
\forall l = 1,\hdots,L, \\
& Q^T y & = 0,\\
& y & \in K^*,\\
& [M_0]_{i \bullet}x + [q_0]_i - p^T z_i & \geq 0, & \forall i = 1,\hdots, n \\
& [P_{\bullet l}]^T z_i & = [M_l]_{i\bullet}x + [q_l]_i, & \forall i = 1,\hdots, n, l = 1,\hdots,L\\
& Q^T z_i & =0, & \forall i = 1,\hdots, n\\
& z_i & \in K^*,& \forall i = 1,\hdots, n.
\end{array}
$$
}
\end{corollary}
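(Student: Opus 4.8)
The plan is to combine the variable-lifting device of Proposition~\ref{prop-nmon-nconv} with the conic-duality reformulation that underlies the robust counterpart of uLCP$(M,q(u))$ over $\Uscr_c$. As in Proposition~\ref{prop-nmon-nconv}, I would first introduce the auxiliary quantities $w_l \triangleq x^T M_l x + q_l^T x$, $l = 1,\dots,L$ (the nonconvex equalities \eqref{nonmono2}). With $w = (w_1,\dots,w_L)$, the objective of \eqref{unlcp} becomes $x^T M_0 x + q_0^T x + \max_{u\in\Uscr_c} u^T w$, while the $i$th feasibility constraint becomes $[M_0]_{i\bullet}x + [q_0]_i + \min_{u\in\Uscr_c}\sum_{l=1}^L u_l\big([M_l]_{i\bullet}x + [q_l]_i\big) \ge 0$. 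The crucial point is that, once $w$ is regarded as a variable, both inner problems are linear in $u$, so the argument already used for uncertainty in $q$ over $\Uscr_c$ applies verbatim.

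The second step is to dualize each inner conic linear program. Writing $\Uscr_c$ in the form \eqref{defUc}, the inner maximization $\max\{w^T u : Pu + Q\nu + p \in K\}$ has, by Theorem~1.3.4 and Proposition~6.2.1 of \cite{bental09robust}, the dual $\min\{p^T y : P^T y + w = 0,\ Q^T y = 0,\ y \in K^*\}$, provided $K$ is polyhedral or the Slater point \eqref{slat_pt} exists; strong duality then gives equality of the two values. Substituting back $w_l = x^T M_l x + q_l^T x$ turns the dual feasibility conditions into $(P^T y)_l + x^T M_l x + q_l^T x = 0$, $Q^T y = 0$, $y \in K^*$, and bounds the worst-case objective by $x^T M_0 x + q_0^T x + p^T y$. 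An identical row-by-row dualization of the feasibility constraint, now with cost vector $\big([M_l]_{i\bullet}x + [q_l]_i\big)_{l=1}^L$ for index $i$, produces for each $i$ the block $[P_{\bullet l}]^T z_i = [M_l]_{i\bullet}x + [q_l]_i$, $Q^T z_i = 0$, $z_i \in K^*$, together with $[M_0]_{i\bullet}x + [q_0]_i - p^T z_i \ge 0$.

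Assembling these pieces, the epigraph variable $t$ modelling the worst-case objective can be eliminated, since minimizing drives it to $x^T M_0 x + q_0^T x + p^T y$ at any optimal $y$; the auxiliary $w_l$ need not be retained as separate variables, as their definitions have been folded directly into the linear constraints on $y$ and the $z_i$. What remains is precisely the displayed finite-dimensional program — a QCQP that is genuinely nonconvex whenever some $M_l$ (equivalently the quadratic form appearing in the $y$-constraints) is indefinite.

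The only substantive point requiring care — and the place where hypotheses enter — is the strong-duality/attainment step: replacing the semi-infinite constraints by their finite dual counterparts is legitimate only if the conic programs over $\Uscr_c$ are solvable with zero duality gap, which is exactly what polyhedrality of $K$ (case (i)) or the Slater condition \eqref{slat_pt} (case (ii)) of the governing proposition guarantees, and one also tacitly uses that $\Uscr_c$ is bounded so that each inner optimum is finite and attained. Beyond that, the derivation is the routine bookkeeping already performed in the monotone case with $q$ uncertain; the single new wrinkle is that the "cost vectors" $w$ and $\big([M_l]_{i\bullet}x + [q_l]_i\big)_l$ are now affine in $x$ but still independent of $u$, so linearity in the uncertainty, and hence the applicability of the duality argument, is preserved.
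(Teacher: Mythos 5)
Your proposal is correct and follows exactly the route the paper intends (the corollary is stated without proof, but it is plainly meant to be the combination of the lifting $w_l = x^T M_l x + q_l^T x$ from Proposition~\ref{prop-nmon-nconv} with the conic-duality reformulation of the inner linear programs over $\Uscr_c$ used in the earlier proposition for uncertain $q$). Your remark that the dualization step silently relies on strong duality and attainment --- i.e.\ on $K$ being polyhedral or on the Slater condition \eqref{slat_pt}, hypotheses the corollary does not restate --- is an accurate and worthwhile observation rather than a gap in your argument.
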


\noindent {\bf Remark:} While stationary points of such problems may be computed through
standard nonlinear programming schemes such as globalized sequential
quadratic programming or interior point
methods~\cite{nocedal99numerical}, our interest lies in obtaining global
solutions of such problems. In the next subsection, we review several approaches for
obtaining global solutions to QCQPs.

\subsection{A branching scheme for resolving nonconvex
	QCQPs}\label{sec:42}
{Before presenting our scheme, we provide a brief review of global
	optimization schemes for resolving indefinite quadratic programs and
their quadratically constrained generalizations. Such a class of problems
has seen significant study~\cite{anstreicher2004sdp, kojima2000cones}.
In \cite{anstreicher2004sdp}, the authors  
	 combine reformulation-linearization-technique (RLT) with an SDP
	 relaxation to tackle QCQP. In \cite{kojima2000cones},  a general
	 framework is buit for solving such problems. 
	While branching schemes come in varied forms, Burer and
	Vandenbussche~\cite{burer2008finite} employ SDP relaxations for
	addressing indefinite quadratic programming. 
We consider a spatial branch-and-bound approach inspired by Fampa et al.
 \cite{fampa2013global} developed for nonconvex quadratic programs. This
 approach uses secant inequalities for deriving a relaxation {of the quadratic objective}. We extend
 this approach to quadratically constrained variants. We emphasize that
 the focus of this paper lies in extending standard robust optimization
 techniques to allow for accommodating uncertain linear complementarity
 problems. While a comprehensive study of branching schemes is beyond
 the scope of the current paper, we show that at least one of the
 approaches can be readily adapted to this context.}  

We begin by noting that the QCQP can be recast as an optimization problem with a linear objective and quadratic constraints, some of which may be nonconvex. We continue using the defnition \eqref{defM4} and illustrate the scheme for the case when $\Uscr := \Uscr^r_\infty = \{u \mid \| u \|_\infty \leq 1, u \geq 0 \}$ and qualify the relaxations and the bounds by using the superscript $\infty$.
Suppose $u \in \Uscr^r_\infty$. From
Prop~\ref{prop-nmon-nconv},  the optimization problem given by \eqref{unlcp2} may be reformulated as follows:
\begin{align}\label{nonconvexprog}
\begin{array}{rrl}
{\displaystyle \min_{x \geq 0, t}} & t \\
\st & x^T M_0 x + q_0 ^T x + \sum_{i=1}^{L} \max \{ x^T M_i x + q_i ^T x , 0 \} & \leq t,\\
& (M_0 x + q_0)_k + \sum_{i=1}^{L} \min \{(M_i x + q_i)_k ,0\} & \geq 0,\qquad \forall k = 1,\hdots,n.
\end{array}
\end{align}
{While the second row of the constraint part in \eqref{nonconvexprog} can be immediately written as:
\begin{align*}
M_0 x + q_0 - \sum_{i=1}^{L} z_i \geq 0, \quad
M_ix + q_i + z_i \geq 0, \quad
z_i \geq 0, \quad \forall i = 1,\hdots,L,
\end{align*}
the chief concern lies in the first constraint which can be
	decomposed into and $2L + 1$ constraints:
\begin{align}\label{nonconvex-cons}
x^T M_0 x + q_0 ^T x + \sum_{i=1}^{L} \tau_i &\leq t,
\mbox{ and } \tau_i \geq 0, x^T M_i x + q_i ^T x  \leq \tau_i, \qquad i = 1, \hdots, L.
\end{align}
	
\noindent {\bf Constructing a relaxation:} 	Akin to the approach employed in
	\cite{fampa2013global}, we use the eigenvalue decomposition of
	$ M_i$, defined as $$M_i = -\sum_{j =1}^{J_i} \lambda_{i,j} \nu_{i,j} \nu_{i,j}^T + \sum_{j = 1}^{K_i} \mu_{i,j} \eta_{i,j}\eta_{i,j}^T, $$ where $\lambda_{i,j} > 0, \mu_{i,j} > 0, \forall j, \forall i = 1,\hdots, L.$ Let $M_i^+ = \sum_{j = 1}^{K_i} \mu_{i,j} \eta_{i,j}\eta_{i,j}^T, \forall i = 0,\hdots, L.$ By defining $y_{i,j}$ as
		$y_{i,j} = \sqrt{\lambda_{i,j}}\nu_{i,j}^T
		x$, quadratic inequalities in \eqref{nonconvex-cons} may be rewritten as 
\begin{align} \label{nonconvex-cons-y}
-\sum_{j=1}^{J_0} y_{0,j}^2 + x^TM_0^+ x + q_0^T x + \sum_{i=1}^L\tau_i & \leq t, \\
-\sum_{j=1}^{J_i} y_{i,j}^2 + x^T M_i^+ x + q_i ^T x  & \leq \tau_i,\qquad  i = 1,\hdots, L.
\end{align}
Suppose $l_{i,j} \leq y_{i,j} \leq u_{i,j}$ for $j =
		1, \hdots, J_i$ and $i = 0,\hdots, L$. Then we may use a
		secant inequality for providing a relaxation to the
		\eqref{nonconvex-cons-y} in the form of the following:
\begin{align} \label{nc-cons-y-rel}
 -\sum_{j=1}^{J_0} \left( (y_{0,j} - l_{0,j}) \left( u_{0,j} + l_{0,j} \right) + l_{0,j}^2 \right) + x^TM_0^+x + q_0^T x + \sum_{i=1}^L\tau_i \leq t, \\
 -\sum_{j=1}^{J_i} \left( (y_{i,j} - l_{i,j}) \left( u_{i,j} + l_{i,j} \right) + l_{i,j}^2 \right) + x^TM_i^+x + q_i^T x  \leq
\tau_i,\qquad  i=1,\hdots,L. 
\end{align}
When $\Uscr:=\Uscr^r_{\infty}$, the resulting relaxed problem is denoted
	by (P$_{\infty}(l,u)$) and is defined as
follows:
\begin{align}\label{convexrelax}
\begin{aligned}
\begin{array}{rrll}
\min & t \\
&  -\sum_{j=1}^{J_0} \left( (y_{0,j} - l_{0,j}) \left( u_{0,j} + l_{0,j} \right) + l_{0,j}^2 \right) + x^TM_0^+x + q_0^T x + \sum_{i=1}^L\tau_i & \leq t,  \\
& \tau_i  & \geq 0, & i = 1, \hdots, L, \\
&  -\sum_{j=1}^{J_i} \left( (y_{i,j} - l_{i,j}) \left( u_{i,j} + l_{i,j} \right) + l_{i,j}^2 \right) + x^TM_i^+x + q_i^T x & \leq
\tau_i, &  i=1,\hdots,L, \\
& l_{i,j} \leq y_{i,j} & \leq u_{i,j}, & \forall j, i,\\
& y_{i,j} - \sqrt{\lambda_{i,j}} \nu_{i,j}^T x & = 0, & \forall j,
	i,\\
& M_0 x + q_0 - \sum_{i=1}^{L} z_i& \geq 0, & \\
& M_ix + q_i + z_i & \geq 0, & \forall i = 1,\hdots,L \\
& z_i & \geq 0, & \forall i = 1,\hdots,L \\
& x & \geq 0. & 
\end{array}
\end{aligned}
\tag{P$_{\infty}$($l,u$)}
\end{align}
\noindent {\bf Obtaining upper and lower bounds for $y_{i,j}$:}
{Crucial to this scheme is the need for obtaining upper and lower
	bounds on $y_{i,j}$ given by $l_{i,j}$ and $u_{i,j}$, respectively.
Consider the set $X_\infty$
\begin{align*} X_{\infty}\triangleq \left\{x \middle|\exists z_i \in \mathbb{R}^n, M_0 x + q_0 - \sum_{i=1}^{L} z_i \geq 0, \quad M_ix + q_i + z_i  \geq 0,\quad z_i  \geq 0, \quad \forall i = 1,\hdots,L, \quad x \geq 0 \right\}.
\end{align*}
The lower and upper bound for $y_{i,j}$ can then be obtained by solving the following set of linear programs:
\begin{align}\label{bound}
\begin{aligned}
\min \slash \max & \qquad \sqrt{\lambda_{i,j}} \nu_{i,j}^T x\\
\st &  \qquad x \in X_\infty.
\end{aligned} \tag{$l^{\infty}_{i,j}\slash u^{\infty}_{i,j}$}
\end{align}
}
Note that we assume these LPs are bounded and when the uncertainty set is either $\Uscr_1$ or $\Uscr_2$, the
relaxation and the upper/lower bounds have to be derived in an analogous
fashion. A formal outline of the branching scheme is provided in
Algorithm~\ref{algorithm:bb}.
}
\begin{algorithm}[H]
  \caption{Spatial branch and bound}
\label{algorithm:bb}
\begin{boxedminipage}{165mm}
    \begin{algorithmic}[1]
    \STATE \textbf{Init.i:} $k:=1$; $\texttt{terminate} := 0$; choose $\epsilon >0$;
	\STATE \textbf{Init.ii:} $M_i = -\sum_{j =1}^{J_i}
	\lambda_{i,j} \nu_{i,j} \nu_{i,j}^T + \sum_{j=1}^{K_i} \mu_{i,j} \eta_{i,j}\eta_{i,j}^T$ for $i = 0,\hdots, L$;
	\STATE \textbf{Init.iii:} For all $i,j$ compute $l_{i,j}^\infty$ and
	$u_{i,j}^\infty$; 
	\STATE  \textbf{Init.iv:} Let $P_1:= P_\infty(l^\infty,u^\infty)$;
	$(x_1^*,\tau_1^*,y_1^*,z_1^*,t_1^*) \in \argmin P_1$; 
	\STATE \textbf{Init.v:} Assign bounds: $\texttt{glb\_lb}: =
	t_1^*$; $\texttt{glb\_ub}:= x_1^{*T} M_0 x_1^* + q_0^Tx_1^* +
	\sum_{i=1}^{L} \max \{x_1^{*T} M_i x_1^* +
		q_i^T x_1^* ,0 \}$; 
	\STATE \textbf{Init.vi:} Update list: $\texttt{list}:=\{
	(P_1,\texttt{glb\_lb},\texttt{glb\_ub})\}$.
    \WHILE {$\texttt{terminate} == 0$}
        \STATE {\bf Branching index:} For $P_k$, choose index pair
		$(\overline{i},\overline{j}) := {\displaystyle \argmax_{(i,j)}}
		(u_{i,j}-l_{i,j})$; $\phi_{\overline{i},\overline{j}} =
		(u_{\overline{i},\overline{j}}+l_{\overline{i},\overline{j}})/2$;
		\STATE {\bf Update bounds:} 
		\begin{align*}
					\widehat u_{i,j} = \begin{cases}
									\phi_{\overline i,\overline j},
									& (i,j) = (\overline
											i,\overline j) \\
									u_{i,j}, & \textrm{otherwise}.
									\end{cases} \textrm{ and } 					\widehat l_{i,j} = \begin{cases}
									\phi_{\overline i,\overline j},
									& (i,j) = (\overline
											i,\overline j) \\
									l_{i,j}, & \textrm{otherwise}.
									\end{cases}
		\end{align*}
		\STATE {\bf Construct leaves:} $P_k^l := P_\infty(u,\widehat l)$; $P_k^u:= P_\infty(\widehat u,l)$. 
		\STATE {\bf Upper and lower bounds for P$_k^l$:}  $(x^l_*,\tau^l_*,y^l_*,z^l_*,t^l_*) \in \argmin (P_k^l)$;\\
	\begin{align*}
	\texttt{lb}^l:= t^l_*;\qquad \texttt{ub}^l: = (x^l_{*})^T M_0 x^l_*
		+ q_0^Tx^l_* + \sum_{i=1}^L \max \{(x^l_*)^{T} M_i x^l_* +
		q_i^Tx^l_* ,0\}
	\end{align*}
\STATE {\bf Upper and lower bounds for P$_k^u$:}
 $(x^u_*,\tau^u_*,y^u_*,z^u_*, t^u_*) \in \argmin (P_k^u)$;
	\begin{align*}
		\texttt{lb}^u:= t^u_*; \qquad \texttt{ub}^u: = (x^u_{*})^T M_0 x^u_*
		+ q_0^Tx^u_* + \sum_{i=1}^{L} \max \{ (x^u_*)^{T} M_i x^u_* +
		q_i^Tx^u_* \};
	\end{align*}
   \STATE {\bf Delete $P_k$ from \texttt{list}:}  $\texttt{list} := \texttt{list} \backslash P_k$;
        \STATE {\bf Append \texttt{list} by
			$(P_k^l,\texttt{lb}^l,\texttt{ub}^l)$:}  \\ If $t^l_* < 
		\texttt{glb\_ub}$, then $\texttt{list} := \texttt{list}
		\cup (P_k^l,\texttt{lb}^l,\texttt{ub}^l)$; If $\texttt{ub}^l < \texttt{glb\_ub}$, then $\texttt{glb\_ub} := \texttt{ub}^l$ and $P^{\ell} := P_k^l$;
        \STATE {\bf Append \texttt{list} by
			$(P_k^u,\texttt{lb}^u,\texttt{ub}^u)$:} \\ If $t^u_* <
		\texttt{glb\_ub}$, then $\texttt{list} := \texttt{list}
		\cup (P_k^u,\texttt{lb}^u,\texttt{ub}^u)$; If $\texttt{ub}^u < \texttt{glb\_ub}$, then $\texttt{glb\_ub} := \texttt{ub}^u$ and $P^{\ell} := P_k^u$;
        \STATE {\bf Termination test:} If $\texttt{glb\_lb} - \texttt{glb\_ub} < \epsilon$,
		then $\texttt{terminate} := 1$; Output $P^{\ell}$ and its solution.
        \STATE {\bf Choose $(P,\texttt{lb},\texttt{ub})$ from \texttt{list} such that the the associated lower bound \texttt{lb} is the smallest in the list and set the global lower bound $\texttt{glb\_lb} = \texttt{lb}$. Let $P_{k+1}:= P$.}

        \STATE $k:=k+1$;
    \ENDWHILE
   \end{algorithmic}
\end{boxedminipage}
\end{algorithm}

\section{Extensions to uncertain VIs and MPCCs}\label{sec:V}
In this section, we consider two key generalizations of the uncertain
monotone linear complementarity problem. In Section~\ref{sec:42}, we
extend this framework to the regime of affine variational inequality
problems over polyhedral sets. Next, we demonstrate how our framework
can address a subclass of stochastic mathematical programs with
equilibrium constraints (MPECs) (cf.~\cite{luo96mathematical}), given by
a stochastic quadratic program with (uncertain) linear complementarity
constraints. 
\subsection{Uncertain affine monotone polyhedral VIs} Two shortcomings immediately
come to the fore when considering the model \eqref{unlcp}:
\begin{enumerate}
\item[(i)] The
set $X$ is a cone;
\item[(ii)] The underlying set is deterministic in that it is uncorrupted by uncertainty.
\end{enumerate}
In this subsection, we show that examining uncertain polyhedral sets can
also be managed within the same framework. Specifically, we begin by considering an uncertain affine variational inequality problem over a polyhedral set of the form given by \eqref{unvi}
wherein $X(u)$ and $F(x,u)$ are defined as
\begin{align} \label{defFXu}
\begin{aligned}
	X(u)   \triangleq \{x: C(u)x \geq b(u), x \geq 0\}  \mbox{ and }
	F(x,u)  \triangleq M(u)x +
	q(u),
\end{aligned}
\end{align}
respectively. From ~\cite[Prop.~1.3.4]{facchinei02finite}, $x$ solves VI$(X(u),F(u))$ if and only if 
there exists a vector $\lambda \in \Real^m$ such that 
\begin{align}
\label{pavi}
\begin{aligned}
	0 \leq x & \perp M(u) x - C(u)^T\lambda + q(u) \geq 0 \\
	0 \leq \lambda & \perp C(u) x - b(u) \geq 0.
\end{aligned}
\end{align}
In short, when $F(x,u)$ is an affine map and $X(u)$ is a polyhedral set, the affine variational inequality
problem is equivalent to a linear complementarity problem over a larger
space of primal and dual variables. This can be more compactly stated as
the following monotone linear complementarity problem:
\begin{align} \label{generalized ULCP}
0 \leq z \perp B(u) z + d(u) \geq 0,
\end{align}
where $$ B(u) \triangleq \pmat{M(u) & -C(u)^T \\ C(u) & 0} \mbox{ and }  d(u)
	\triangleq \pmat{q(u) \\-b(u)},  $$ respectively.  It is relatively
	easy to see that $B(u)$ is a positive semidefinite matrix since
	$z^TB(u) z = x^TM(u) x \geq 0$ if $M(u)$ is a positive semidefinite
	matrix. This allows for making the following tractability claim when
	$\Uscr = \Uscr_2$. Naturally, we may also extend other statements
	drawn from the regime of uncertain linear complementarity problems but leave
	that for future work. 
\begin{proposition} Consider an uncertain variational inequality problem
denoted by $\{\mbox{VI}(X(u),F(\bullet;u))\}_{u \in \Uscr}$ where $\Uscr = \{u \mid \| u \|_2 \leq 1 \}$, $X(u)$
	and $F(x,u)$ are defined in \eqref{defFXu}, where
\begin{align*}
M(u) = S^T(u) S(u), S(u) = \sum_{l=1}^L
u_l S_l + S_0,q = q_0 + \sum_{l=1}^L u_l q_l ,\\ 
 C(u) = C_0 + \sum_{l=1}^L u_l C_l, b(u) = b_0 + \sum_{l=1}^L u_l b_l, u \in \Uscr.
\end{align*}
Then a robust solution of this problem is given by a solution to a tractable convex program.
\end{proposition}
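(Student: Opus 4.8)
The plan is to build on the reduction already recorded in \eqref{generalized ULCP}: for each fixed $u$, the affine VI over the polyhedral set \eqref{defFXu} is equivalent, by \cite[Prop.~1.3.4]{facchinei02finite}, to the monotone LCP$(B(u),d(u))$ in the enlarged variable $z=(x;\lambda)$, with $B(u)=\pmat{M(u) & -C(u)^T \\ C(u) & 0}$, $d(u)=\pmat{q(u)\\-b(u)}$, and $B(u)\succeq 0$ because $z^TB(u)z=x^TM(u)x\ge 0$. Hence a robust solution of the VI, in the worst-case-gap sense, is recovered from a solution of the robust counterpart
\begin{align*}
\min_{z\ge 0,\,t}\ t \quad \st\quad z^T\big(B(u)z+d(u)\big)\le t \ \mbox{ and }\ B(u)z+d(u)\ge 0,\qquad \forall u\in\Uscr ,
\end{align*}
which has exactly the form of \eqref{unlcpCF}. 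It therefore suffices to give a tractable (semidefinite) reformulation of each of the two semi-infinite constraints; the first I would handle with Lemma~\ref{lemmacons1} and the second with Lemma~\ref{propcons2}, after matching the present data to the uncertainty set $\Uscr_A$ of \eqref{defMC}--\eqref{defMC1}.

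For the objective constraint the key observation is that the indefinite off-diagonal blocks of $B(u)$ cancel in the quadratic form: $z^TB(u)z=x^TM(u)x=x^TS(u)^TS(u)x=z^T\widetilde A(u)^T\widetilde A(u)z$, where $\widetilde A(u)=\widetilde A_0+\sum_{l=1}^L u_l\widetilde A_l$ and $\widetilde A_l=\pmat{S_l & 0}$ with the zero block occupying the $\lambda$-columns. Likewise $d(u)^Tz=d_0^Tz+\sum_{l=1}^L u_l\,d_l^Tz$ is affine in $u$, with $d_l=\pmat{q_l\\-b_l}$. Writing the constraint as $z^T\widetilde A(u)^T\widetilde A(u)z+2z^T(d(u)/2)-t\le 0$ shows it is precisely \eqref{cons1} for the set \eqref{defMC} under the identification $(x,A_l,q_l,\xi)\leftrightarrow(z,\widetilde A_l,d_l,u)$, so Lemma~\ref{lemmacons1} supplies the semidefinite constraint \eqref{cholcon} (with an auxiliary scalar $\tau$).

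For the feasibility constraint I would argue row by row: for each index $i$, $[B(u)z+d(u)]_i\ge 0$ for all $\|u\|_2\le 1$. Expanding $M(u)=S(u)^TS(u)$ produces exactly the affine, bilinear and quadratic monomials in $u$ displayed in \eqref{defMC1} (with $A_l:=S_l$), and $C(u),q(u),b(u)$ are affine in $u$; hence each such scalar function equals $a_i(z)+b_i(z)^Tu+u^TC_i(z)u$ with $a_i,b_i,C_i$ linear in $z=(x;\lambda)$ --- the exact situation treated in the proof of Lemma~\ref{propcons2}. The same lifting then applies: minimizing over $u$ is replaced by minimizing the linear functional $\langle\widehat u,M_i(z)\rangle$ over the convex hull $\widehat{\mathcal Z}=\mbox{conv}\{\mathcal Z\}$, which by \cite[Lemma~14.3.7]{bental09robust} --- and here the $\ell_2$-ball structure of $\Uscr$ is essential --- is a spectrahedron, and conic duality turns the $i$th constraint into the SDP block \eqref{SDPTRC} with $x$ replaced by $z$. (Rows coming from the block in which only $C(u)$ appears are affine in $u$ and form the trivial sub-case.) Stacking the reformulation of the objective constraint with the SDP blocks for all rows and the sign constraint $z\ge 0$ yields a single finite-dimensional semidefinite program, in the spirit of Theorem~\ref{RC_unlcp_mon}; any optimal $z^\star=(x^\star;\lambda^\star)$ then furnishes the desired robust solution $x^\star$, and, as in the lemma following \eqref{RCP}, its optimal value need not vanish unless a solution common to all $u\in\Uscr$ exists.

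The main obstacle --- and the reason this is not an immediate corollary of Theorem~\ref{RC_unlcp_mon} --- is the saddle-point block structure of $B(u)$, which is neither an affine perturbation of a fixed positive semidefinite matrix nor, as a matrix, of Cholesky form $\widetilde A(u)^T\widetilde A(u)$. Two points must be checked with care: (i) that in the quadratic form $z^TB(u)z$ the indefinite off-diagonal blocks vanish, so that the objective constraint genuinely collapses to the $\Uscr_A$ template required by Lemma~\ref{lemmacons1}; and (ii) that the linear-feasibility constraint $B(u)z+d(u)\ge 0$, although $B(u)$ is quadratic in $u$, still matches the monomial pattern of the rewriting \eqref{defMC1}, so that the argument of Lemma~\ref{propcons2} carries over verbatim with $x$ replaced by $z$. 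Once these are verified, no new estimates are needed.
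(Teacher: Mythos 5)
Your proposal is correct and follows essentially the same route as the paper: reduce to the robust counterpart of the enlarged monotone LCP in $z=(x;\lambda)$, apply Lemma~\ref{lemmacons1} to the gap constraint after noting that the skew off-diagonal blocks of $B(u)$ cancel in $z^TB(u)z$, and apply the SDP lifting of Lemma~\ref{propcons2} row by row to the semi-infinite feasibility constraints involving $M(u)=S(u)^TS(u)$. The only (immaterial) difference is that the paper treats the rows $C(u)x-b(u)\ge 0$ separately as second-order cone constraints via the affine-perturbation result of Ben-Tal et al., whereas you absorb them as the degenerate $C_i(z)=0$ sub-case of the same lifting; both are valid.
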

\begin{proof}
Recalling that $z=(x,\lambda)$, the robust counterpart of
\eqref{generalized ULCP} is given by the following:
\begin{align}
\notag \min \qquad t\\
\label{sym-con} \st \quad  z^T ( B(u) z + d(u) )& \leq t, & \forall u \in \Uscr,\\
\label{asym-con} M(u) x  - C(u)^T
\lambda + {q(u)} & \geq 0, & \forall u \in \Uscr,\\
\label{asym-con-2} C(u) x  - b(u)  & \geq 0, & \forall u \in \Uscr,\\
\notag x, \lambda  & \geq 0.
\end{align}
We begin by considering constraint \eqref{sym-con} which can be recast
as as $z^T (B(u)+B(u)^T) z + 2z^Td(u) \leq 2t$. Consequently, this
constraint can be reformulated as a constraint similar to \eqref{cons1}.
By applying Lemma ~\ref{lemmacons1}, constraint \eqref{sym-con} can be
shown to be a tractable convex constraint.  Next, we proceed to show the tractability of constraint
\eqref{asym-con} which is equivalent to the following collection of $n$
optimization problems where $i = 1, \hdots, n$:
\begin{align} \notag
\min \quad & \left[\sum_{l=1}^L \left( [S_l^T S_0 + S_0^T S_l]_{i\bullet} x +
		{[q_l]}_i - {[C_l^T]_{i\bullet}} \lambda \right) u_l  + \sum_{l < m}
[S_l^T S_m + S_m^T S_l]_{i\bullet} x u_l u_m   +\sum_{l=1}^L [S_l^T S_l]_{i\bullet} x u_l^2 \right] \\
\st  \quad & \| u \|_2 \leq 1.
\end{align}
{Analogous to Theorem \ref{RC_unlcp_mon}, this problem can be
	rewritten as a linear matrix inequality and some linear inequlities. It follows that
		\eqref{asym-con} can be rewritten as a collection of $n$ linear
		matrix inequalities and a bunch of linear inequalitiess.}
Finally, constraint \eqref{asym-con-2} can be rewritten as the following
set of constraints:
$$
[C_0]_{i\bullet}x - {[b_0]_i} + \sum_{l=1}^L u_l \left([C_l]_{i\bullet} x -
{[b_l]_i}\right)\geq 0, \qquad \forall u \in \Uscr,  \qquad i = 1,
	\hdots, n.
$$
This set of semi-infinite constraints is equivalent to a finite set of
convex constraints in the form of second order cone constraints, which is discussed in Example 1.3.3. from \cite{bental09robust}.
\end{proof}

\subsection{Uncertain mathematical programs with complementarity
	constraints}
Over the last two decades, the mathematical program with equilibrium
constraints (MPECs) has found utility in modeling a range of problems,
including Stackelberg equilibrium problems, structural design problems,
bilevel programming problems, amongst others. A comprehensive
description of the models, theory, and the associated algorithms may be
found in the monograph by Luo et al.~\cite{luo96mathematical}. When the
lower-level problem is given by a complementarity problem, then the MPEC
reduces to a mathematical program with complementarity constraints
(MPCC). We consider the uncertain counterpart of MPCC defined as
follows:
\begin{align}\label{MPCC}
\begin{aligned}
\min & \quad   f(x,y)  \\
\st & \quad   h(x,y)   \geq 0 \\
	& \quad 0 \leq y  \perp F(x,y)  \geq 0. 
\end{aligned}
\end{align}
The MPCC is an ill-posed nonconvex program in that it lacks an interior. In fact,
	standard constraint qualifications (such as LICQ or MFCQ) fail to
	hold at any feasible point of such a problem.  We define an
	uncertain MPCC as a collection of MPCCs given by 
	$$ \{ \mbox{MPCC}(f,h,F) \}_{u \in \Uscr}, $$ 
	in which $f,h$ and $F$ are parametrized by
$u$ where $u \in \Uscr$:
\begin{align}\label{uMPCC}
\begin{aligned}
\min & \quad   f(x,y,u)  \\
\st & \quad   h(x,y,u)   \geq 0,   \\
	& \quad 0 \leq y  \perp F(x,y,u)  \geq 0. 
\end{aligned}
\end{align}
We may then define a robust counterpart of this problem as
follows:
\begin{align}\label{rMPCC1}
\begin{aligned}
\min & \quad   \max_{u \in \Uscr} f(x,y,u)  \\
\st & \quad   h(x,y,u)   \geq 0,  & \forall u \in \Uscr   \\
	& \quad 0 \leq y  \perp F(x,y,u)  \geq 0,   & \forall u \in \Uscr.
\end{aligned}
\end{align}
This problem is a nonconvex semi-infinite program. By utilizing the framework developed earlier, we may reformulate \eqref{rMPCC1} as a finite-dimensional MPCC.  Unfortunately, the semi-infinite complementarity constraint given by 
$$ 0 \leq y  \perp F(x,y,u)  \geq 0, \qquad \forall u \in
	\Uscr$$
need not admit a solution. Instead, we recast the uncertain
complementarity constraint as the following:
\begin{align}\label{rMPCC2}
\begin{aligned}
\min & \quad   t  \\
\st & \quad   f(x,y,u)  \leq t  \qquad \forall u \in \Uscr \\
	& \quad	h(x,y,u)    \geq 0  \qquad \forall u \in \Uscr \\
	& \quad  y   \mbox{ solves } \left\{ \begin{aligned}
				{\min_y} \max_{u \in U}  \quad & y^TF(x,y,u) \\
								F(x,y,u) & \geq 0, \qquad \forall u \in
								\Uscr\\
									& {y \geq 0} 
								\end{aligned}\right\}.
\end{aligned}
\end{align}
A natural question is whether a low-dimensional counterpart of \eqref{rMPCC2} is
available. Under convexity assumptions on $f(x,y,u)$ and concavity
assumptions on $h(x,y,u)$ in $x$ and $y$ for
every $u$, and some assumptions on the uncertainty set, tractable counterparts may be
constructed for the first two constraints in \eqref{rMPCC2}. By the findings of
the prior sections, under some conditions, a robust counterpart of an
uncertain LCP can be cast as a single convex program. The following
result presented without a proof provides a set of assumptions under which the lower-level problem
can be recast as a convex program:
\begin{proposition}
Suppose $F(x,y,u)$ is an affine map given by $F(x,y,u)=Ax + M(u)y +
q(u)$ and $M(u) = M_0 + \sum_{l=1}^L u_l M_l, q(u)=q, M_l \succeq 0,
	\forall l = 0, \hdots, L$ , then the third constraint of \eqref{rMPCC2} can be
	replaced by the optimality conditions of a convex program if $\Uscr
	= \Uscr_1, \Uscr_2$ or $\Uscr_{\infty}$.
\end{proposition}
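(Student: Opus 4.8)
The plan is to reduce the lower-level robust complementarity problem that appears as the third constraint of \eqref{rMPCC2} to an instance already handled in Section~\ref{sec:32}, and then to replace the clause ``$y$ solves that problem'' by the first-order optimality conditions of the resulting convex program.

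First I would fix $x$ and treat it as a parameter. Since $F(x,y,u) = M(u)y + \hat q_x$ with $\hat q_x \triangleq Ax + q$ deterministic, the inner problem
\[
\min_{y \ge 0}\ \max_{u \in \Uscr}\ y^T F(x,y,u) \quad \text{subject to}\quad F(x,y,u) \ge 0\ \ \forall u \in \Uscr
\]
is, after the standard epigraph reformulation, precisely the robust counterpart \eqref{unlcpM} of the uncertain $\mathrm{LCP}(M(u),\hat q_x)$, with $M(u)$ of the form \eqref{defM2} (each $M_l \succeq 0$ for $l = 0,\hdots,L$) and $\Uscr \in \{\Uscr_1,\Uscr_2,\Uscr_\infty\}$. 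This is exactly the hypothesis of Proposition~\ref{prop_nmon_lcp}, so for every fixed $x$ the inner problem is equivalent to a convex program $G_x$ in $y$ together with a bounded number of auxiliary variables (a QP when $\Uscr=\Uscr_\infty$, a QCQP when $\Uscr=\Uscr_1$, and a convex second-order-cone-constrained program when $\Uscr=\Uscr_2$). Since $\hat q_x$, and hence $x$, enters the data of $G_x$ affinely, the convexity in the decision variables is uniform over $x$.

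Next I would invoke the fact that the solution set of a convex program is characterized by its KKT conditions. Substituting the KKT system of $G_x$ (in the variables $y$, the auxiliary variables, and the associated multipliers) for the clause ``$y$ solves $\{\cdots\}$'' in \eqref{rMPCC2} converts the third constraint into a finite system of linear equalities, linear inequalities, and complementarity conditions, so that \eqref{rMPCC2} becomes a finite-dimensional MPCC whose lower level is captured by the optimality conditions of a convex program. When $\Uscr=\Uscr_\infty$ this replacement is lossless without any further hypothesis, since $G_x$ is then a convex quadratic program over a polyhedral feasible set and KKT is necessary and sufficient for optimality. When $\Uscr = \Uscr_1$ or $\Uscr_2$, $G_x$ carries genuine convex quadratic or conic constraints, and I would invoke Slater's condition for $G_x$ -- either assumed outright or enforced through the enlargement device described in the Remark following Theorem~\ref{RC_unlcp_mon} -- so that KKT again characterizes optimality exactly.

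The main obstacle is bookkeeping rather than depth: one must check that the $y$-component of any optimal solution of the reformulated $G_x$ is a robust solution of the original uncertain complementarity constraint, i.e.\ that the auxiliary variables introduced in Proposition~\ref{prop_nmon_lcp} do not alter the feasible set once projected onto $y$, and one must secure the constraint qualification that makes the KKT substitution lossless for the $\Uscr_1$ and $\Uscr_2$ cases. I would also emphasize that this proposition concerns only the third constraint of \eqref{rMPCC2}; tractable counterparts of the first two constraints rely separately on the assumed convexity of $f$ and concavity of $h$ in $(x,y)$ together with the robust-optimization reformulations established in the earlier sections.
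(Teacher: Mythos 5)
Your proposal is correct and is precisely the argument the paper leaves implicit: the paper states this proposition without proof, and the intended justification is exactly your reduction of the lower-level problem, for fixed $x$, to the robust counterpart \eqref{unlcpM} of an uncertain LCP$(M(u),Ax+q)$ satisfying the hypotheses of Proposition~\ref{prop_nmon_lcp}, after which ``$y$ solves'' is replaced by the optimality conditions of the resulting convex program. Your added care about a constraint qualification (Slater) being needed for the KKT substitution to be lossless in the $\Uscr_1$ and $\Uscr_2$ cases is a legitimate point that the paper glosses over, and is worth retaining.
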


\section{Numerical results}\label{sec:VI}
In Section~\ref{sec:VI-I}, we compare the quality of the residual of non-robust solutions with their robust analogues on an example presented in Section~\ref{sec:II}. The performance benefits of
the presented branching scheme on a non-monotone problem are examined in Section~\ref{sec:VI-II} and we
conclude with a case study on uncertain traffic equilibrium problems in
Section~\ref{sec:VI-III} where we compare robust solutions with the ERM
solutions investigated in the literature.
\subsection{Monotone uncertain LCPs}\label{sec:VI-I}
We consider the constructed uncertain LCP defined in Section~\ref{sec:II} 
for which the solution is known a priori.
Table~\ref{tab:ulcp} shows that the presented techniques allow for
obtaining the robust solution $x^{\texttt{rob}}$ and this corresponds
closely with the analytically available solution $x^{\texttt{anlyt}}$.
Furthermore, an arbitrarily chosen scenario-specific solution, given as
$x^{\texttt{n-rob}}$, leads to large deviation from the analytical optimal solution and 
significantly higher  residual.
\begin{table}[htbp]
{\scriptsize
\begin{center}
\begin{tabular}{|c|c|c|c|c|c|clcl}
\hline
n & $\|  x^{\texttt{rob}} - x^{\texttt{anlyt}} \|_2$ & residual of
$x^{\texttt{rob}}$ &
$\|  x^{\texttt{n-rob}} - x^{\texttt{anlyt}} \|_2$ & residual of
$x^{\texttt{n-rob}}$\\
\hline
10 & 3.9e-08 & 2.0e-07& 0.4e+03& 5.0e+07\\
\hline
20 & 4.7e-08 & 3.6e-07 & 0.7e+03 & 1.0e+09\\
\hline
40 & 1.8e-07 & 2.2e-06 & 1.6e+03 & 4.3e+10\\
\hline
80 & 5.1e-07 & 5.2e-06 & 5.5e+03 & 3.9+12\\
\hline
160 & 1.6e-05 & 5.3e-04 & 1.6e+04 &2.8e+14\\
\hline
\end{tabular}
\end{center}
}
\caption{Robust vs non-robust solutions}
\label{tab:ulcp}
\end{table}

\subsection{Non-monotone uncertain LCPs}\label{sec:VI-II}
We now consider a non-monotone LCP$(M(u),q(u))$ whose robust counterpart
is given by the following:
\begin{align}\label{ex3}
\begin{array}{rrl}
\min & t \\
\st & x^T (u_1S_1 - u_2S_2) x + (u_1q_1 + u_2q_2)^T x & \leq t, \quad \forall u \in \Uscr,\\
& (u_1S_1 - u_2S_2) x + u_1q_1 + u_2q_2 & \geq 0,\quad \forall u \in \Uscr,\\
& x & \geq 0.
\end{array}
\end{align}
where $e_n^T = (1,\hdots, n)$, $S_1 = e_ne_n^T \succeq 0$, $S_2 = 10^4 \times B^TB \succeq 0$, $\Uscr = \{(u_1,u_2) \mid 0
\leq u_1,u_2 \leq 1\}$, $B$ is a randomly generated matrix with elements drawn from ${\cal
		N}(0,1)$, $q_1 = -e_n$ and $q_2 = \frac{10}{n(n+1)} \times S_2e_n .$ It should
		be emphasized that our analysis allows for deriving the robust
		counterpart of this problem as a relatively low-dimensional nonconvex QCQP. In the absence
		of such an analysis, a direct approach would require solving an
		approximate nonconvex QCQP whose size is of the order of
		magnitude of the discretization.
Table~\ref{tab:nonconvex} provides a comparison of the performance of
three solvers on the RC  on a set of test problems for increasing matrix dimension $n$: (i) our branching scheme, (ii) the commercial
global optimization
solver \texttt{baron};  and (iii) the multi-start solver from
\texttt{Matlab}.
\begin{table}[htbp]
\centering {\scriptsize
\begin{tabular}{|c|c|c|c|c|c|c|c|c|c|c|}
\hline
Size & \multicolumn{4}{|c|}{Branching scheme} &
\multicolumn{2}{|c|}{\texttt{baron}} &
\multicolumn{2}{|c|}{\texttt{matlab}} & \\\hline 
$n$	& time(s) & $z^{\texttt{branch}}$ & Nodes & Gap & time(s) & $z^{\texttt{baron}}$ &
time(s) & $z^{\texttt{matlab}}$ &
	$ \frac{\|x^{\texttt{branch}}-x^{\texttt{baron}}\| } { 1+\|x^{\texttt{baron}}\| }$ &
	$\frac{\|x^{\texttt{branch}}-x^{\texttt{matlab}}\| }{ 1+\|x^{\texttt{baron}}\| }$ \\
\hline
6	&	1.32	&	0.0100	& 35 &	0.00	&	0.61	&	0.0098
&	3.52	&	2.54	&	0.00	&	0.33	\\
7	&	2.21	&	0.0035	 &	82	&	0.00	&	1.81
&	0.0164	 &	 4.54	&	2.53	&	0.00	&	0.21	\\
8	&	2.29	&	0.1648 &	87 &	0.00	&	0.87
&	0.1682	&	5.07	&	1.38	&	0.00	&	0.33\\
9	&	14.19	&	0.0072	&	406	&	0.00	&
0.89	&	0.0076	 &	 6.40 & 3.65	&	0.00 &	0.23	\\
10	&	9.80	&	0.0040	 &	254	&	0.00
&	1001.7	&	0.0125	 &	 6.93 & 0.74	& 0.00	&	0.22	\\
11	&	72.74 &	0.0036 &	893	&	0.00
&	2.94	&	0.0026	 &	6.09	&	14.31  &	0.00	&	0.22	\\
12 & 83.73 & 0.1998 & 1539 & 0.00 & 1.79 & 0.1990 & 8.22 & 10.69 & 0.00 & 0.13\\
\hline
\end{tabular}}
\caption{Global optimization of nonconvex QCQPs: CPU 3.40Ghz RAM 16.0 GB
}
\label{tab:nonconvex}
\end{table}
The results from Table \ref{tab:nonconvex} suggest the following. First,
	our branching scheme provides reasonably accurate solutions by
	comparing with the commercial solver \texttt{baron}, somtimes even better with respect to the optimal value $z$. Furthermore,
	the performance is significantly superior in terms of optimal value
	to the solutions provided by \texttt{Matlab}. {Third,
	\texttt{baron}'s performance in terms of time is superior to that
	provided by our \texttt{Matlab}-based branching solver is not
	altogether surpising,  given that it uses extensive
	pre-processing and has been developed on C/C++.} 
\subsection{Case study: Uncertain traffic equilibrium
	problems}\label{sec:VI-III}
\begin{figure}[htbp]
\begin{center}
\includegraphics[width=2in]{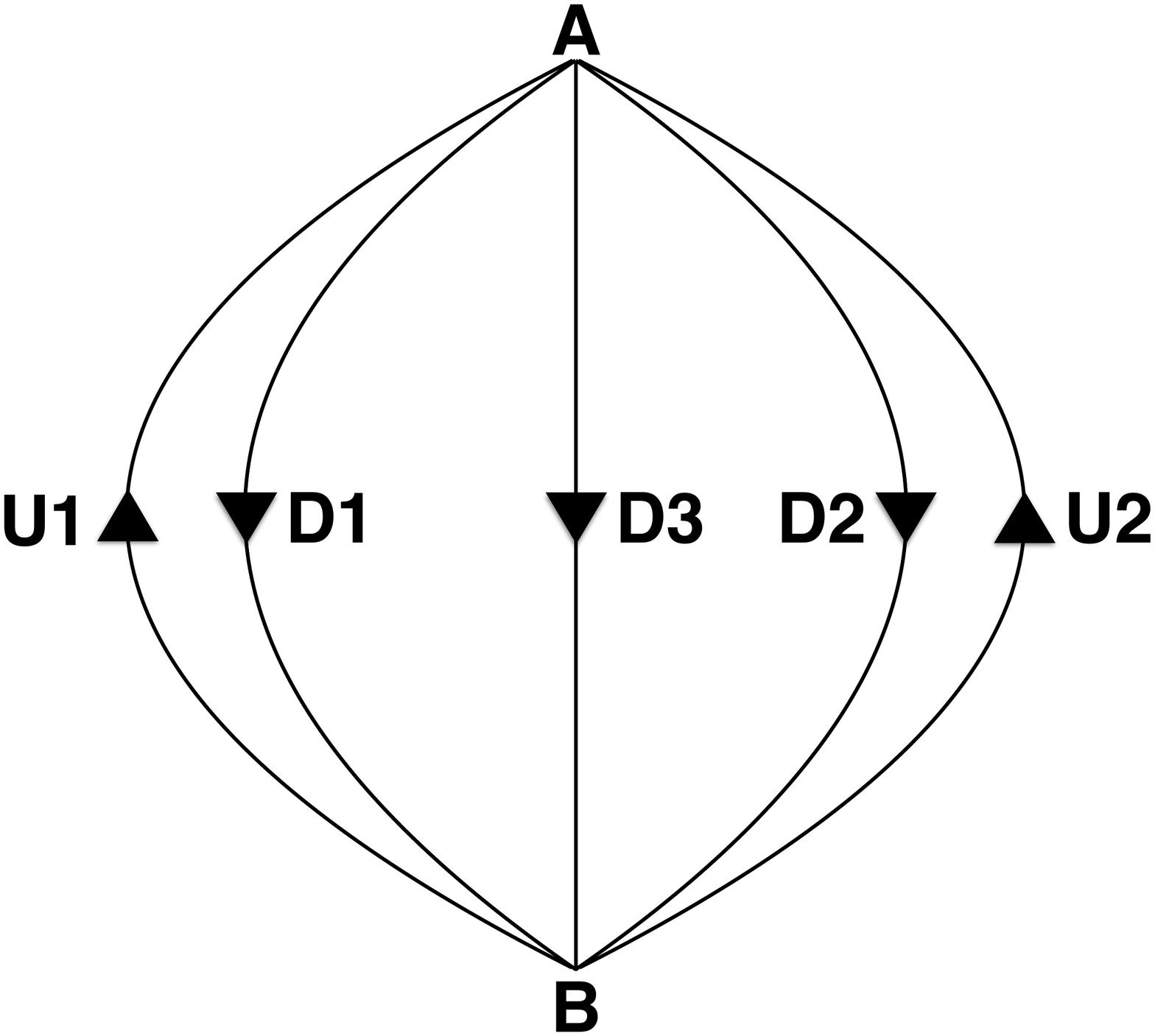}
\end{center}
\caption{2-node traffic network}
\end{figure}
\paragraph{2-node and 5 link network:} Consider the uncertain traffic equilibrium of the form described in
Section~\ref{sec:II}, sourced from~\cite{fang2007stochastic}. Suppose the associated network has two vertices $A,B$ and  five arcs
	$D_1,D_2,D_3,U_1,U_2$. Let $\xi$ denote the  flow over these five
	paths and $T(u) \xi + t$ represent the travel associated travel times,
	where $T(u)$ is an uncertain $5 \times 5$ matrix and $t \in \mathbb{R}^5$ is a
	constant vector. Suppose $B$ represents the path-OD pair incidence
	matrix and $d(u) \in
	\mathbb{R}^2$ represents the uncertain demand. Let $\tau$
	represent the minimum travel time for each direction.  Recall that
	the equilibrium point is given by a solution to the following:
$$
0 \leq x \perp M(u)x + q(u) \geq 0,\qquad \forall u \in \mathcal{U}
$$
where $x = (\xi,\tau)$, $M(u)$ and $q(u)$ are defined as
$$
M(u) =
\pmat{
T(u) & -B^T\\
B & 0},
q(u)=\pmat{t\\
-d(u)}, B=\pmat{1 & 1 & 1 & 0 & 0 \\
0&0&0&1&1}, t = \pmat{1000 \\ 950 \\ 3000 \\ 1000 \\ 1300}
,$$
and $T(u)$ is defined as 
\begin{align*}
T(u)  & = \pmat{40\alpha(u) & 0 & 0 & 20\beta(u) & 0 \\
0 & 60\beta(u) & 0 & 0 & 20\beta(u) \\
0 & 0 & 80\beta(u) & 0 & 0 \\
8\alpha(u) & 0 & 0 & 80\alpha(u) & 0 \\
0 & 4\beta(u) & 0 & 0 & 100\beta(u)},  
d(u)  = \pmat{260-100(\alpha(u)+\beta(u))\\
170-100(\alpha(u)+\beta(u))},
\end{align*}
$\alpha(u)=\frac{1}{2}u(u-1)$  and  $\beta(u)=u(2-u).$
Suppose $\mathcal{U} \triangleq \{ u_1, u_2, u_3 \}$ where $u_1, u_2,$
and $u_3$ denote a sunny, windy, and a rainy day respectively. In an
effort to compare the obtained solutions with that obtained from the ERM
model~\cite{fang2007stochastic}, we assume that these events occur with probability
$\frac{1}{2},\frac{1}{4},$ and $\frac{1}{4} $ (Note that our model does not
		require a probability distribution). Corresponding to this
problem, the ERM solution is denoted by $x^{\texttt{erm}}$ while the robust
solution is $x^{\texttt{rob}}$. Furthermore, non-robust scenario-specific
solutions are denoted by $x^1, x^2$ and $x^3.$ Table~\ref{tab:comp}
compares the optimality and feasibility of such points with respect to
the robust counterpart.
\begin{table}[htbp]
{\scriptsize
\begin{center}
\begin{tabular}{|c|c|c|c|c|c|clcl}
\hline
\multicolumn{4}{|c|}{solution} & infeasibility & complementarity\\
\hline
$x^1$ &  \multicolumn{3}{|c|}{$(0, 260, 0, 170, 0, 950, 1000)$} & 0 & 4.251E+06\\
\hline
$x^2$ &  \multicolumn{3}{|c|}{$(159.2, 0.83, 0, 70, 0, 1000, 1000)$} & 250 & 1.717E+06\\
\hline
$x^3$ &  \multicolumn{3}{|c|}{$(0, 160, 0, 3.75, 66.25, 950, 1300)$} & 500 & 2.228E+06 \\
\hline
$x^{\texttt{erm}}$ &  \multicolumn{3}{|c|}{$(84, 84, 21, 80, 20, 975, 1000)$} & 166 & 1.089E+06\\
\hline
$x^{\texttt{rob}}$ &  \multicolumn{3}{|c|}{$(117.7, 89.5, 52.8, 90.5, 79.5, 950, 1000)$} & 0 & 1.840E+06\\
\hline
\end{tabular}
\end{center}
}
\caption{Comparison across solutions}
\label{tab:comp}
\end{table}
In this table, the infeasibility function is defined as
${\displaystyle \max_{u \in \mathcal{U}}}(e^T\max(- M(u)x - q(u), 0))
$
while the complementarity residual is defined as:
$
{\displaystyle \max_{u \in \mathcal{U}}} x^T ( M(u) x + q(u) ).
$
It is seen that the robust solution and $x^1$ are feasible for every
$u$. Notably, the ``sunny day'' design is feasible but leads to a large  
complementarity residual. Note that if both the
feasibility and the complementarity metric is zero, this implies that
the solution is an equilibrium for every $u \in \Uscr$. {The robust
solution minimizes the worst complementarity residual among all possible scenarios and from that standpoint, it is seen to be superior to $x^1$, the solution that minimizes the residual for the first scenario.} Furthermore, $x^{\texttt{erm}}$ might have a superior complementarity residual but such a solution may be rendered infeasible for certain realizations.  
Table~\ref{tab:comp1} compares the value of parametrized gap function
$G(x,u)$, defined as 
$$G(x,u) \triangleq {\displaystyle\sup_{y\geq 0}} (x-y)^T (M(u)x +
		q(u)).$$ The
lowest value of $G(x,u)$ is achieved by $x^{\texttt{erm}}$ when $u =
u_2$. However, $G(x^{\texttt{erm}},u_1) = G(x^{\texttt{erm}},u_3) =
	+\infty$, a consequence of infeasibility.   However,
	$G(x^{\texttt{rob}},u) < \infty$ for every $u \in \Uscr$. In  
Table~\ref{tab:comp2}, we consider how the robust solution satisfies
demand requirements (ensuring feasibility) while the ERM solution may
not satisfy demand for all realizations (leading to infeasibility).
\begin{table}
{\scriptsize 
\begin{minipage}[b]{0.4\linewidth}\centering
\begin{tabular}{|c|c|c|c|c|c|clcl}
\hline
& $u=u_1$ & $u=u_2$ & $u=u_3$ \\
\hline
$x^{\texttt{rob}}$ &	1.4e+5 & 1.8e+6 & 1.8e+6 \\
\hline
$x^{\texttt{erm}}$ & Inf	 & 6.5e+5 & Inf \\
\hline
\end{tabular}
\label{tab:comp1}
\caption{Evaluation of  $G(x^*,u)$}
\end{minipage}
\begin{minipage}[b]{0.6\linewidth}\centering
\begin{tabular}{|c|c|c|c|c|c|c|}
\hline
OD pair	& possible demand & $x_{RO}$	& $x_{ERM}$ & $x_1$ &	$x_2$ &	$x_3$ \\
\hline
AB & 260,160 & 260 & 189 & 260 &	160 & 160\\
\hline
BA & 170,70 & 170 &	100	& 170 & 70 & 70\\
\hline
\end{tabular}
\label{tab:comp2}
\caption{Flow of each OD pair}
\end{minipage}}
\end{table}

\paragraph{5-node and 7-link network:} We now consider a larger traffic network considered in
Section~\ref{sec:II} with 7 links and 6 paths. 
\begin{figure}
\vspace{-0.1in}
\begin{center}
\includegraphics[width=2in]{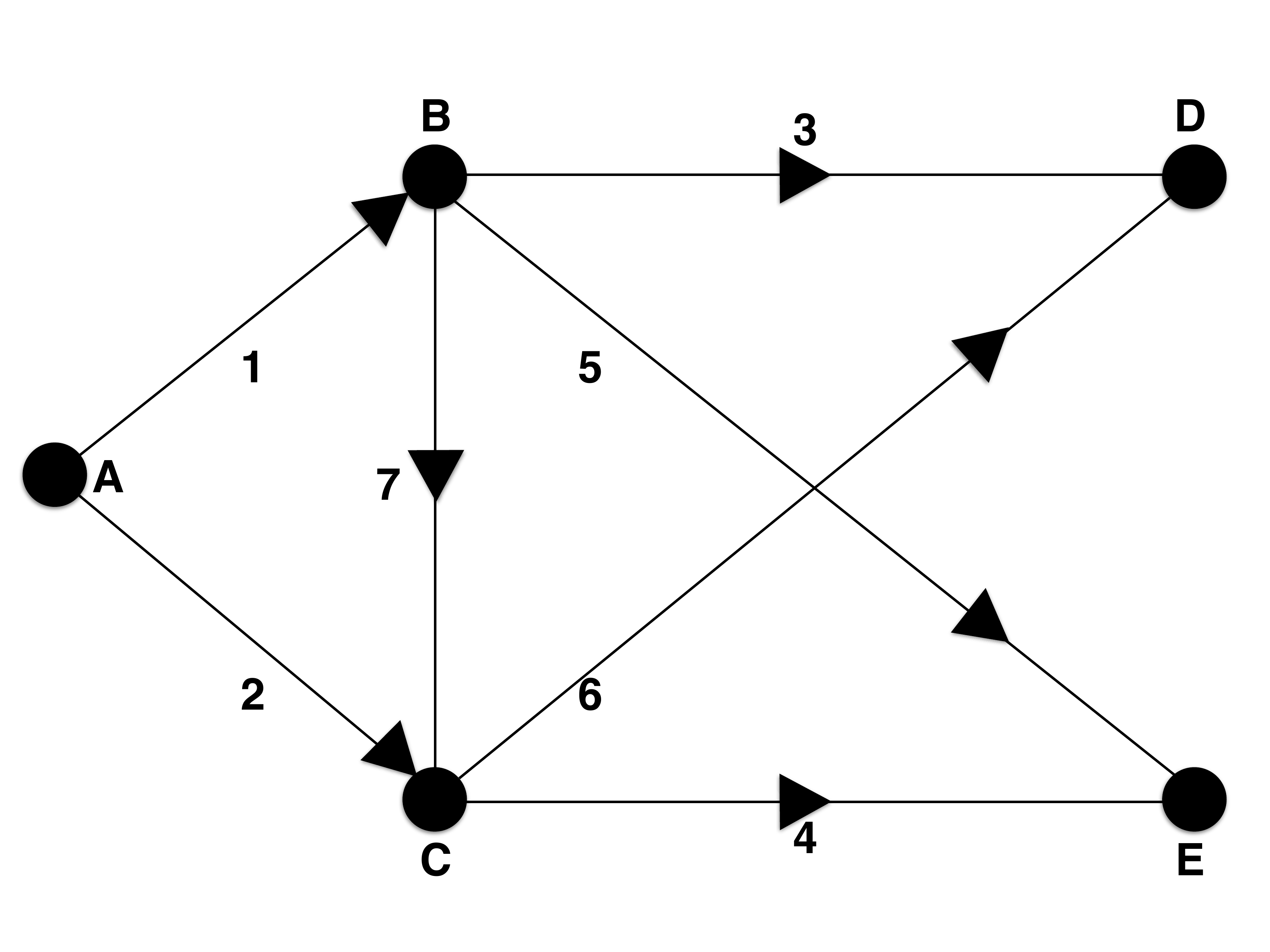}
\end{center}
\caption{Traffic Network}
\label{traff-net-1}
\end{figure}
{Figure ~\ref{traff-net-1} represents a 7-link network with 6-paths
sourced from~\cite{ChenWZ12} and $A \rightarrow D$ and $A \rightarrow E$
represent two origin-destination (OD) pairs. The OD pair $A \rightarrow D$ is
connected by paths $p_1=\{1,3\},p_2=\{1,7,6\},p_3=\{2,6\}$ while the OD
pair $A \rightarrow E$ is connected by paths $p_4=\{1,5\}, p_5 = \{1,7,4\}, p_6
=\{2,4\}$. The demand along every OD pair is denoted by  $d(u)\in
\mathbb{R}^2$ where $u \in \mathcal{U}$ while the link capacity is
captured by the vector $c(u)\in \mathbb{R}^7,u \in \mathcal{U}$. Let
vector $x \in \mathcal{R}^6$ denote the assignment of flows to all path
from $p_1$ to $p_6$ and $f \in \mathcal{R}^7$ denote the assignment of
flows to all links $1,\hdots,7$. Then the relationship between $x$ and
$f$ is presented by:
$
f = \Delta x,
$
where $\Delta = (\delta_{i,j})$ is the link-path incidence matrix. The
entry $\delta_{i,j}$ is set at $1$ if and only if link $i$ lies in path
$j$. Let $B=(b_{i,j})$ denote the OD-path incidence matrix and
$b_{i,j}=1$ if and only if path $j$ connects the $i$th OD pair. In this
	case, the two matrices are given as follows:
\begin{equation}
\Delta =
\left(
\begin{array}{cccccc}
1 & 1 & 0 & 1 & 1 & 0\\
0 & 0 & 1 & 0 & 0 & 1\\
1 & 0 & 0 & 0 & 0 & 0\\
0 & 0 & 0 & 0 & 1 & 1\\
0 & 0 & 0 & 1 & 0 & 0\\
0 & 1 & 1 & 0 & 0 & 0\\
0 & 1 & 0 & 0 & 1 & 0\\
\end{array}
\right) \mbox{ and }
B =
\left(
\begin{array}{cccccc}
1 & 1 & 1 & 0 & 0 & 0 \\
0 & 0 & 0 & 1 & 1 & 1 \\
\end{array}
\right).
\end{equation}
Following a generalized bureau of public roads (GBPR) function, the multivalue link cost function $C(f,u)$ is defined as:
\begin{equation}
C_i(f,u)=c_i^0 \left(1.0 + 0.15 \left( \frac{f_i}{c_i(u)} \right)^{n_i} \right), i = 1, \hdots, 7
\end{equation}
where $c_i^0$ and $n_i$ are known parameters. Let $n_i = 1$ for all $i$, then the travel cost function is given as:
\begin{align}
F(u,x) & =\eta \Delta^T C(\Delta x,u)  = 0.15 \eta \Delta^T \mbox{diag} \left( \frac{c_i^0}{c_i(u)} \right) \Delta x + \eta \Delta^T c^0 \triangleq M(u)x + q.
\end{align}
Let $w \in \mathbb{R}^2$ denote the minimum travel cost of each OD pair.
	Last, by Wardrop's user equilibrium, the uncertain CP formulation is
	given by the following:
\begin{equation}\label{trafficequLCP}
0 \leq \pmat{x \\ w} \bot \pmat{M(u) &  -B^T \\ B & 0} \pmat{x \\ w} +
	\pmat{q \\ -d(u)} \geq 0, \qquad \forall u \in \Uscr.
\end{equation}}
Tables \ref{tab:comp3} and 
\ref{tab:comp4} show the comparison between different solutions of the
LCP given by \eqref{trafficequLCP}. $x^{\texttt{rob}}$ denotes the
robust solution  of \eqref{trafficequLCP} in that it minimizes the worst
case of the gap function $G(z,u)$, defined as 
$$G(z,u) = \sup_{y\geq
	0}(z-y)^T(A(u)z+b(u)), z = \pmat{x \\ w}, A(u) = \pmat{M(u) & -B^T
		\\ B & 0}, b(u) = \pmat{q \\ -d(u)}.$$ We consider a
case when $\Uscr = \{ u \mid -1 \leq u \leq 1\},
\frac{1}{c_i(u)} = (\widehat{c}_0)_i + u (\widehat{c}_1)_i$, where
$\widehat c_0$ and $\widehat c_1$ are defined as follows:
\begin{align*}
\widehat{c}_0 & = -\widehat{c}_1=
(1/40,1/40,1/20,1/20,1/20,1/20,1/20), c^0 = (3,5,6,4,6,4,1),\\
d(u) & = d_0 + ud_1, d_0=(200;220), d_1=(50;40).
\end{align*}
{The ERM solution $x_{ERM}$ is
constructed as follows. Let $x_{ERM}=(y;w)$ where $y$ is obtained by
	$$y = (I - B^\dagger B)x^* + B^\dagger \mathbb{E}[d(u)], \mbox{
		where }  B^\dagger = B^T(B B^T)^{-1}.$$ Note that $x^*$ is a
		minimizer of $\phi(x)$ over the set $D$, where
\begin{align*}
\phi(x)  & = \mathbb{E}[f(x,u)], f(x,u) = z(x,u)^T F(z(x,u),u) + Q(z(x,u),u),
z(x,u) = (I - B^\dagger B)x + B^\dagger d(u),\\
F(z,u) & = M(u)z + q,
Q(z,u)=\max \left\{ -y^T F(z,u) \mid By = d(u), y \geq 0 \right\} = \min
	\left\{ y^T d(u) \mid B^T y + F(z,u) \geq 0\right\},\\
D & = \left\{ x \mid B^\dagger B x \leq c, c_i = \min_{u \in \Uscr}
(B^\dagger d(u))_i \right\},
\end{align*}
as per the recent work by  Chen, Wets and Zhang~\cite{ChenWZ12}. 
Note that an estimator the minimizer of $\mathbb{E}[f(x,u)]$ is obtained
via sample-average approximation schemes while $w$ is acquired by taking the minimum of the
average costs of paths in each OD-pair. Let the average costs of paths be captured by a vector $ v =
\mathbb{E} [M(u)y+q ]$, then $w_1 =
\min\{v_1,v_2,v_3\},w_2 =
\min\{v_4,v_5,v_6\}$. $x_1,\hdots,x_5$ are
		the solutions of the program $\min_{A(u)x+b(u) \geq 0,x\geq 0}
		\{x^T(A(u)x+b(u))\}$, when $u = -1,-0.5,0,0.5,1$, respectively.} 
	\begin{table}[htbp]
{\scriptsize
\begin{center}
\begin{tabular}{|c|c|c|c|c|c|c|c|c|c|}
\hline
OD pair	& range of possible demand & $x^{\texttt{rob}}$	&
$x^{\texttt{erm}}$ &$x_1$ &	$x_2$ &	$x_3$ & $x_4$ & $x_5$\\
\hline
AD & 150-250 & 250 & 200 &	150 & 175 & 225 & 250 & 200\\
\hline
AE & 180-260 & 260 & 220 & 180 & 200 & 240 & 260 & 220\\
\hline
\end{tabular}
\end{center}
}
\caption{Flow across two OD pairs}
\label{tab:comp3}
\end{table}
Table \ref{tab:comp3} shows the traffic flow between two OD pairs.
Again, the robust solution satisfies  the largest possible
demand while the ERM solution does not satisfy demand for all possible
realizations. When we compare the residual function for a particular
$x$ and $u$ as seen in Table \ref{tab:comp4}, {while the robust solution
$x^{\texttt{rob}}$ does not provide the best  function value for every scenario, it minimizes the
worst case. In fact, for the non-robust solutions, except $x^4$, every
solution displays an infinite residual function for some $u$. Notably,
		 the ERM solution also have infinite residuals for some realizations of $u$.} 
\begin{table}[htbp]
{\scriptsize
\begin{center}
\begin{tabular}{|c|c|c|c|c|c|c|c|c|c|}
\hline
$u$ & $x^{\texttt{rob}}$	& $x^{\texttt{erm}}$ & $x^1$ &	$x^2$ &	$x^3$ & $x^4$ & $x^5$\\
\hline
-1 & 10343 & 4340  & 	6488 &	7922	& 18322 &	19000	& 4329\\
\hline
-0.5 & 7863 & 2176  & Inf	 & 6772 & 14671 & 14250 &	2165\\
\hline
0.0	& 5382 & Inf & Inf	 & Inf &	11021	& 9500 &	0.000449\\
\hline
0.5 &	2901 & Inf & Inf	& Inf &	7370 &	4750 &	Inf \\
\hline
1.0 & 421 & Inf & Inf	& Inf	& Inf &	1.170e-05	& Inf\\
\hline
\end{tabular}
\end{center}
}
\caption{Residual function value at different sample points}
\label{tab:comp4}
\end{table}


\section{Concluding remarks} \label{sec:VII}
\begin{table}[H]
{\scriptsize
\begin{center}
\begin{tabular}{|c|c|c|c|c|c|clcl}
\hline
$\Uscr$ & $M,q(u)$ & $M(u),q$ & $M(u),q(u)$ & $A^T(u)A(u),q(u)$\\
\hline
$\Uscr_\infty$ & convex QP & convex QP & nonconvex QCQP & $\diagup$\\
\hline
$\Uscr_1$ & convex QP & convex QCQP & nonconvex QCQP & $\diagup$\\
\hline
$\Uscr_2$ & convex QCQP & convex program & nonconvex program & SDP\\
\hline
$\Uscr_c$ &  QP with conic constraints & $\diagup$ & $\diagup$ & $\diagup$\\
\hline
$\Uscr_1^r$ & $\diagup$ & convex QCQP & $\diagup$ & $\diagup$ \\
\hline
$\Uscr_\infty^r$ & $\diagup$ & convex QP & $\diagup$ & $\diagup$\\
\hline
\end{tabular}
\end{center}
}
\caption{Characterization of robust counterparts under varying
	assumptions. }
\label{tab: conclusion}
\end{table}

In this paper, we consider the resolution of finite-dimensional monotone
complementarity problems corrupted by uncertainty. A distinct thread in
the literature has considered the minimization of the expected residual.
This avenue relies on the availability of a probability distribution and
the solution of a stochastic, and possibly nonconvex, program. Instead,
we consider an avenue that relies on the availability of an uncertainty
set. By leveraging findings from robust convex programming, we show that
uncertain monotone linear complementarity problems can be tractably
resolved as a single convex program. {In fact, when the uncertain linear
complementarity problem is not necessarily monotone, under some
conditions on the uncertainty set, the tractable robust counterpart of
this problem can be shown to be convex, a consequence of leveraging the
hidden convexity in the problem. More generally, the robust
counterpart is a nonconvex  quadratically constrained quadratic program.
We adapt and present a recently presented branching scheme to accommodate such
problems. Table \ref{tab: conclusion} provides a compact representation of the
tractability statements and the nature of the uncertainty sets that correspond
to these statements. The columns of this table correspond to different
assumptions of uncertainty on $M$ and $q$. Note that $M(u) = M_0 +
\sum_{l=1}^Lu_lM_l, M_l \succeq 0 \mbox{ or } \preceq 0$ for $l = 1,\hdots,L$,
$q(u) = q_0 + \sum_{l=1}^Lu_lq_l$, and $A(u) = A_0 + \sum_{l=1}^Lu_lA_l.$ We further
observe that such statements can be utilized to show {the following}:
\bi
\item[(1)]
The tractable robust
counterparts of an uncertain affine variational
inequality problem(uncertain AVIs) over uncertain polyhedral sets are SDPs under some assumptions on the uncertainty set.
\item[(2)]
Robust counterparts of mathematical programs with uncertain linear complementarity constraints (uncertain MPCC) can be reformulated as deterministic low-dimensional mathematical programs with complementarity constraints.
\ei}
{Future research will consider a study of nonlinear generalizations of
	$F(x,u)$ as well as extensions to variational and hierarchical
		regime.} 
Finally, our preliminary numerical investigations {reinforce the
belief} that non-robust solution {may} produce large worst-case
residual compared with robust solutions. Furthermore, we present a 
branching-based procedure for {obtaining global solutions to robust
	counterparts of  non-monotone uncertain LCPs and note its
	effectiveness compared to commercial global solvers.}  Finally,
	robust solutions are qualitatively different
from their ERM counterparts in the context of traffic
equilibrium problems.

{Finally,  we comment on the nature of the uncertainty sets. We employ an ``uncertain-but-bounded'' model of uncertainty (a terminology that has its roots in~\cite{bental09robust}) in which the values of the uncertain parameter are represented through an affine parametrization of $\zeta$ which varies in the perturbation set $\mathcal{Z}.$ Naturally, this is by no means the only way to represent uncertainty. For instance, one alternate approach is to introduce chance constraints with {\em ambiguity}; in this model, we impose chance or probabilistic constraints under the caveat that the distribution is known partially in that it belongs to a family of distributions. We leave such questions for future work.  }

 {\bf Acknowledgements.} We would like to thank the associate editor and the two referees for their constructive suggestions. In particular, their comments have led to the addition to the NP-hardness result (Lemma 4.1) and the tractability result (Theorem 3.10). We would also like to thank Dr. Vineet
Goyal for alerting us to Reference [4]. 
\bibliographystyle{siam}

\end{document}